\DeclareRobustCommand{\SkipTocEntry}[4]{}
\renewcommand{\subsection}[1]{\vspace{.18in}\par\noindent\addtocounter{subsection}{1}\setcounter{equation}{0}{\bf\thesubsection.\hspace{5pt}#1}}
\newtheorem{theorem}{Theorem}[section]
\numberwithin{theorem}{section}
\numberwithin{equation}{theorem}
\theoremstyle{definition}
\newtheorem{rem}[theorem]{Remark}
\theoremstyle{plain}
\newtheorem{prop}[theorem]{Proposition}
\newtheorem{thm}[theorem]{Theorem}
\newtheorem{lem}[theorem]{Lemma}
\newtheorem{cor}[theorem]{Corollary}
\newcommand{\C}{\mathbb C}
\newcommand{\K}{\mathbb K}
\newcommand{\E}{\mathbb E}
\newcommand{\Z}{\mathbb Z}
\newcommand{\N}{\mathbb N}
\newcommand{\mf}{\mathfrak}
\newcommand{\HR}{{\mathcal{H}_{r,R}}}
\newcommand{\HC}{\mathcal{H}^{\text{\sf c}}_{r}}
\newcommand{\HCR}{\mathcal{H}^{\text{\sf c}}_{r,R}}
\newcommand{\udi}{{\underline{i}}}
\newcommand{\al}{\alpha}
\newcommand{\ep}{\epsilon}
\newcommand{\la}{\lambda}
\newcommand{\Ga}{\Gamma}
\newcommand{\La}{\Lambda}
\newcommand{\F}{\mathbb F}
\newcommand{\mc}{\mathcal}
\newcommand{\sH}{\mathcal{H}}
\newcommand{\sT}{\mathcal{T}}
\newcommand{\fS}{\mathfrak{S}}
\def\sfT{{\mathsf T}}
\def\sfS{{\mathsf S}}
\def\sfR{{\mathsf R}}
\def\sfU{\mathsf U}
\newcommand{\qQnr}{\mathcal{Q}_\bsq(n,r)}
\newcommand{\vQnr}{\mathcal{Q}_\bsv(n,r)}
\newcommand{\qSnr}{\mathcal{S}_\bsq(n,r)}
\newcommand{\End}{{\rm End}}
\newcommand{\mcZ}{{\mathcal{Z}}}
\def\sQ{{\mathcal Q}}
\def\sA{{\mathcal A}}
\def\sZ{{\mathcal Z}}
\def\sfc{{\mathsf c}}
\def\bsv{{\boldsymbol v}}
\def\bsq{{\boldsymbol q}}
\def\vQr{{{\mc Q}_\bsv(n,r)}}
\def\Innr{{I(n|n,r)}}
\def\qq{{\text{queer}} $q$}
{\vskip-\lastskip\medskip
  \noindent
  {\em #1.}\enspace
  }%
{\qed\par\medskip
  }
\begin{document}

\title[The queer $q$-Schur superalgebra]{The queer $q$-Schur superalgebra}
\author[Du and Wan]{Jie Du and Jinkui Wan$^0$}\footnotetext{Corresponding author.}
\thanks{Supported by ARC DP-120101436 and NSFC-11571036.}

\address{(Du) School of Mathematics and Statistics,
University of New South Wales,
UNSW Sydney 2052,
Australia.
 }\email{j.du@unsw.edu.au}
\address{(Wan) School of Mathematics and Statistics, Beijing Institute of Technology,
Beijing, 100081, P.R. China. } \email{wjk302@hotmail.com}

\begin{abstract} As a natural generalisation of $q$-Schur algebras associated with the Hecke algebra $\HR$ (of the symmetric group), we introduce the \qq-Schur superalgebra associated with the Hecke-Clifford superalgebra $\HCR$, which, by definition, is
the endomorphism algebra of  the induced $\HCR$-module from certain $q$-permutation modules over $\HR$. We will describe certain integral bases for these superalgebras in terms of matrices and will establish the base change property for them.
 We will also identify the \qq-Schur superalgebras with the quantum queer Schur superalgebras investigated in the context of quantum queer supergroups
and provide a constructible classification of their simple polynomial
representations over a certain extension of the field $\mathbb C(\bsv)$ of complex rational functions.
\end{abstract}


\maketitle

\begin{center}
{\it We dedicate the paper to the memory of Professor James A. Green}
\end{center}

\section{Introduction}

In the determination of the polynomial representations of the complex general linear group $GL_n(\mathbb C)$, Issai Schur introduced certain finite dimensional algebras to describe the homogeneous components of these representations and to set up a correspondence between the representations of $GL_n(\mathbb C)$ of a fixed homogeneous degree $r$ and the representations of the symmetric group $\fS_r$ on $r$ letters. This correspondence forms
part of the well-known Schur--Weyl duality. J.A. Green publicised this algebraic approach in his 1980 monograph \cite{Gr} and presented the theory ``in some ways very much in keeping with the present-day ideas on representations of algebraic groups''. In particular, he used the term {\it Schur algebras} for these finite dimensional algebras and provided some standard combinatorial treatment for the structure and representations of Schur algebras.

Since the introduction of quantum groups in mid 1980s, the family of Schur algebras has been enlarged to include $q$-Schur algebras, affine $q$-Schur algebras, and their various super analogous. See, e.g.,  \cite{DJ2}, \cite{DDPW}, \cite{DDF}, \cite{DR} and the references therein. Thus, the treatment in \cite{Gr} has also been ``quantised'' in various forms which provide applications to both structure and representations of the associated quantum linear groups, finite general linear groups, general linear $p$-adic groups, general linear supergroups and so on. In this paper, we will further generalise Green's standard treatment to a new class of Schur superalgebras, the \qq-Schur superalgebra.


Originally arising from the classification of complex simple (associative) superalgebras, there are two types of $q$-Schur superalgebras. The ``type $\tt M$'' $q$-Schur superalgebra links representations of quantum linear supergroups with those of Hecke algebras of type $A$, while the ``type $\tt Q$'' $q$-Schur superalgebra, or simply the \qq-Schur superalgebra, links the quantum queer supergroup with the Hecke--Clifford superalgebra. It is known that the former has various structures shared with the $q$-Schur algebra; see \cite{Mi} , \cite{DR} , \cite{ElK} and \cite{DG}, while the latter has been introduced in \cite{Ol,DW} in the context of Schur--Weyl--Sergeev duality and investigated in \cite{DW} in terms of a Drinfeld--Jimbo type presentation (cf. \cite{ElK}).

In this paper, we will explore the algebraic properties of the \qq-Schur superalgebra.
Thus, like the $q$-Schur algebra, we first introduce a \qq-Schur superalgebra
as the endomorphism algebra of a direct sum of certain ``$q$-permutation supermodules'' for the Hecke--Clifford superalgebra. We then construct its standard basis arising from analysing double cosets in the symmetric group. In particular, we use matrices to label the basis elements. In this way, we establish the base change property for the \qq-Schur superalgebra. We then identify them in the generic case with the quotients of the quantum queer supergroups and classify their irreducible representations in this case.


Here is the layout of the paper. In  \S \ref{sec:QqSchur}, we review the definition of the Hecke--Clifford superalgebra $\HCR$ over a commutative ring $R$ and its standard basis. We then introduce certain (induced) $q$-permutation supermodules and define the \qq-Schur superalgebra $\mc Q_q(n,r;R)$.
In  \S \ref{sec:permutation supermodules}, we study the properties of $q$-permutations supermodules and investigate some new bases. In  \S \ref{element}, we introduce some special elements in the Clifford superalgebras which is the key to a basis for the endomorphism algebra of the induced $q$-trivial representation.  An integral basis and the base change property for the \qq-Schur superalgebra are given in  \S \ref{sec:integral basis}. We then identify the \qq-Schur superalgebras as quotients of the quantum queer supergroup in \S6. Using the identification together with a work by Jones and Nazarov \cite{JN}, a complete set of irreducible polynomial (super)representations of the quantum queer supergroup is constructed in the last section.

{\it Throughout the paper}, let $\Z_2=\{0,1\}$. We will use a twofold meaning for $\Z_2$.
We will regard $\Z_2$ as an abelian group when we use it to describe a superspace.
However,  for a matrix or an $n$-tuple with entries in $\Z_2$, we will regard it as a subset of $\Z$.

Let $\sA=\mathbb Z[\bsq]$ and let  $R$ be a commutative ring which is also an $\sA$-module by mapping $\bsq$ to $q\in R$. We assume that the characteristic of $R$ is not equal to 2. If $\sQ$ denotes an $\sA$-algebra or an $\sA$-module,
then $\sQ_R=\sQ\otimes_{\sA} R$ denotes the object obtained by base change to $R$. From \S6 on wards, we will use the
 ring $\mcZ=\mathbb Z[\bsv,\bsv^{-1}]$ of Laurent polynomials in indeterminate $\bsv$, where $\bsq=\bsv^2$.

\vspace{.3cm}

\noindent
{\bf Acknowledgement.} The research was carried out while Wan was visiting
the University of New South Wales during the year 2012--2013. The hospitality and support of UNSW are gratefully acknowledged.


\section{Hecke-Clifford superalgebras and \qq-Schur superalgebras}\label{sec:QqSchur}
We first introduce an endomorphism algebra of an induced module from certain ``$q$-permutation modules'' over the Hecke--Clifford superalgebra. We will prove in \S6 that it is isomorphic to the quantum queer Schur superalgebra defined in \cite{DW} by studying the tensor space of the natural representation of the quantum queer supergroup $U_{\bsv}(\mf q_n)$.

Let $R$ be a commutative ring of characteristic not equal to 2 and let $q\in R$.
Let $\mc C_r$ denote the {\it Clifford superalgebra} over $R$ generated by odd elements $c_1,\ldots,c_r$
subject to the relations
\begin{equation}\label{Cl}
c_i^2=-1, \quad c_ic_j=-c_jc_i, \quad 1\leq i\neq j\leq r.
\end{equation}
The {\it Hecke-Clifford superalgebra} $\HCR$ is the
associative superalgebra over the ring $R$ with the
even generators $T_1,\ldots,T_{r-1}$ and the odd generators
$c_1,\ldots,c_r$ subject to \eqref{Cl} and the following additional relations: for $1\leq i,i'\leq r-1$ and $1\leq j\leq r$ with $|i-i'|>1$ and $j\neq i, i+1$,
\begin{equation}\label{commute-Tc}
\aligned
(T_i-q)&(T_i+1)=0,  \quad T_iT_{i'}=T_{i'}T_i, \quad T_iT_{i+1}T_i =T_{i+1}T_iT_{i+1},\\
T_ic_j &=c_jT_i,  \quad T_ic_i=c_{i+1}T_i,  \quad
T_ic_{i+1}=c_iT_i-(q-1)(c_i-c_{i+1}).
  \endaligned
\end{equation}
Note that if $q$ is invertible then $T_i^{-1}$ exists and the last relation is obtained by taking the inverses of the last second relation.

For notational simplicity,  if $R=\sA:=\mathbb Z[\bsq]$, the integral polynomial ring in $\bsq$, then we write $\HC=\mathcal{H}^{\text{\sf c}}_{r,\sA}$.

 Let $\mf S_r$ be the symmetric group on $\{1,2,\ldots,r\}$ which is generated by the simple transpositions $s_k=(k,k+1)$
for $1\leq k\leq r-1$. The subalgebra $\mc H_{r,R}$ of $\HCR$ generated by $T_1,\ldots,T_{r-1}$
is the (Iwahori-)Hecke algebra associated with $\mf S_r$.

 \begin{rem}\label{JN}
  Jones and Nazarov \cite{JN} introduced the notion of an affine Hecke-Clifford superalgebra which is generated by $T_1,\ldots,T_{r-1},c_1,\ldots,c_r,X_1,\ldots,X_r$ subject to the relations~\eqref{Cl}, \eqref{commute-Tc} and certain additional explicit relations which are not mentioned here.
It is known from \cite[Proposition 3.5]{JN} (cf. also \cite[Remark 4.2]{BK}) that $\HCR$ can naturally be viewed as a quotient superalgebra of the  affine Hecke-Clifford superalgebra by the two-sided ideal generated by $X_1$.

We also note that the generator $T_i$ used in \cite{BK} is equal to $v^{-1}T_i$ here, where $v^2=q$ for some $v\in R$. Moreover, we choose $c_j^2=-1$ in
\eqref{Cl}, following \cite{Ol,Se}, whereas in {\em loc. cit.} the authors take $C_j^2=1$. Thus, assuming $\sqrt{-1}\in R$, we have $c_i=\sqrt{-1}C_i$.
\end{rem}

 Like the Hecke algebra $\HR$, the symmetry of $\HCR$ can be seen from the following (anti-)automorphisms of order 2.

\begin{lem}\label{anti-inv} The $R$-algebra $\HCR$ admits the following algebra
involutions $\varphi,\psi$ and anti-involutions $\tau,\iota$ defined by
\begin{enumerate}
\item $\varphi:\, T_i\mapsto -T_{r-i}+(q-1),\quad c_j\mapsto c_{r+1-j}$;
\item $\psi: T_i\mapsto c_iT_ic_{i+1},\quad c_j\mapsto c_j$;
\item $\tau:\,T_i\mapsto -T_i+(q-1),\quad c_j\mapsto c_j$;
\item $\iota:\,T_i\mapsto T_i-(q-1)c_ic_{i+1},\quad c_j\mapsto c_j$,
\end{enumerate}
 for all  $1\leq i,j\leq r$ and $i\neq r$,
\end{lem}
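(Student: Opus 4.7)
The plan is to handle each of the four maps by two steps: (a) verify the assignment extends to an algebra (anti-)homomorphism on $\HCR$ by checking that the images of the generators satisfy the defining relations \eqref{Cl} and \eqref{commute-Tc}, read in the original product order for the automorphisms $\varphi,\psi$ and in reversed order for the anti-involutions $\tau,\iota$; (b) verify that squaring acts as the identity on each generator, which then forces order two on the whole algebra.

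For $\varphi$ the Clifford relations are preserved since $j\mapsto r+1-j$ permutes $\{1,\dots,r\}$; the Hecke-type relations reduce to the standard observation that $T_i\mapsto -T_i+(q-1)$ preserves the Hecke presentation, together with the harmless reindexing $i\mapsto r-i$ (which preserves $|i-i'|>1$); the mixed $T$--$c$ relations follow by applying \eqref{commute-Tc} after reindexing. For $\psi$ the only nontrivial step is the braid relation: expanding $(c_iT_ic_{i+1})(c_{i+1}T_{i+1}c_{i+2})(c_iT_ic_{i+1})$, using $c_k^2=-1$ to collapse adjacent Clifford letters and anti-commutation to collect signs, then pushing the remaining $c$'s across $T$'s via \eqref{commute-Tc}, reduces everything to $T_iT_{i+1}T_i=T_{i+1}T_iT_{i+1}$; the quadratic relation for $c_iT_ic_{i+1}$ is then immediate from $c_i^2=c_{i+1}^2=-1$.

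For $\tau$ and $\iota$ every defining relation must be checked with products reversed. The nontrivial $\tau$-case is that under $T_i\mapsto -T_i+(q-1)$ the reversed asymmetric relation holds as a consequence of both asymmetric relations in \eqref{commute-Tc}. The most delicate case is $\iota$: setting $X=T_i-(q-1)c_ic_{i+1}$ and using $(c_ic_{i+1})^2=-1$ together with the identity
\begin{equation*}
T_ic_ic_{i+1}+c_ic_{i+1}T_i=(q-1)c_ic_{i+1}-(q-1),
\end{equation*}
which follows from \eqref{commute-Tc}, the quadratic $(X+1)(X-q)=0$ reduces to a cancellation of $(q-1)^2$-terms, and a similar (longer) computation handles the braid and the reversed mixed relations. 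Step (b) is then immediate: e.g.\ $\iota^2(T_i)=T_i-(q-1)c_ic_{i+1}-(q-1)c_{i+1}c_i=T_i$ since $\iota$ reverses order and $c_{i+1}c_i=-c_ic_{i+1}$; the other cases on generators are similar. The main obstacle will be the braid-relation verification for the perturbed $\iota(T_i)$, where cross-terms among anti-commuting Clifford generators must be tracked carefully; everything else is essentially mechanical substitution.
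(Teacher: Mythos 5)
Your plan is sound and, as far as the key identities you quote go, correct: the anticommutator identity $T_ic_ic_{i+1}+c_ic_{i+1}T_i=(q-1)c_ic_{i+1}-(q-1)$ does follow from \eqref{commute-Tc} and does reduce the quadratic relation for $\iota(T_i)$ to the one for $T_i$, and your order-two checks on generators are fine. However, your route is genuinely different from the paper's. You verify all defining relations (in reversed order for $\tau,\iota$) separately for each of the four maps, which makes the braid relation for $\psi(T_i)=c_iT_ic_{i+1}$ and especially for $\iota(T_i)=T_i-(q-1)c_ic_{i+1}$, together with the reversed mixed $T$--$c$ relations for $\iota$, the computational bottleneck — and these are exactly the parts you leave as ``mechanical'' without carrying them out. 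The paper instead argues compositionally: it checks relations directly only for one auxiliary anti-involution $\gamma\colon T_i\mapsto T_{r-i},\ c_j\mapsto c_{r+1-j}$, gets $\tau$ from the known $\#$-automorphism $T_i\mapsto -T_i+(q-1)$ of the Hecke subalgebra $\HR$ (which commutes with fixing the $c_j$), obtains $\varphi=\gamma\circ\tau$ as a composite of two commuting anti-involutions, proves $\psi$ by observing that $\dot T_i=c_iT_ic_{i+1}$ and the $c_j$ satisfy the same presentation of $\HCR$, and then gets $\iota$ for free as $\psi$ followed by $\tau$ taken relative to the generators $\dot T_i$, using the identity $T_i-(q-1)c_ic_{i+1}=-c_iT_ic_{i+1}+(q-1)$. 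The compositional argument entirely avoids the delicate braid-relation check for the perturbed element $\iota(T_i)$ that you flag as the main obstacle; what your approach buys in exchange is self-containedness (no appeal to the $\#$-automorphism of $\HR$ or to the $\dot T_i$ trick), at the cost of substantially more computation that your write-up only sketches. If you keep your route, you should actually carry out the braid and reversed mixed relations for $\iota$ (or note that they follow from those for $\psi$ and $\tau$ exactly as in the paper), since asserting them as ``similar but longer'' is where your proof is currently incomplete.
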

\begin{proof}  Note that the correspondence $T_i\mapsto -T_i+(q-1)$ in (3), denoted by $(\;\;)^\#$ in \cite{DJ2}, defines an automorphism of the subalgebra $\HR$. So (3) follows easily.  (If $q^{-1}\in R$, then (3) is seen easily by the fact $-T_i+(q-1)=-qT_i^{-1}$.)
By directly checking the relations \eqref{Cl} and \eqref{commute-Tc}, there is an anti-involution
\begin{equation*}
\gamma:\, T_i\longmapsto T_{r-i},\;\; c_j\longmapsto c_{r+1-j}.
\end{equation*}
Thus, compositing $\gamma$ and $\tau$ gives (1).

Let $\dot T_i=c_iT_ic_{i+1}$. Then $\dot T_i, c_j$ generate $\HCR$ and satisfy all the relations \eqref{Cl} and \eqref{commute-Tc} above with $T_i$ replaced by $\dot T_i$. Hence, $\psi$ in (2) is an automorphism of order 2.
 Finally, the map $\iota$ in (4) is the map $\psi$ followed by the map $\tau$ defined relative to the generators $\dot T_i,c_j$, since, by the last relation in \eqref{commute-Tc},
$$\iota(T_i)=T_i-(q-1)c_ic_{i+1} =-c_iT_ic_{i+1}+(q-1).$$
(Of course, by noting Remark \ref{JN}, assertion (4)
follows also from \cite[(2.31)]{BK}.)
\end{proof}

Let $T_{s_i}=T_i$ and $T_w=T_{s_{i_1}}\cdots T_{s_{i_k}}$ where $w=s_{i_1}\cdots s_{i_k}\in\mf S_r$ is a reduced expression.
Let $\leq$ be the Bruhat order of $\mf S_r$, that is,  $\pi\leq w$ if $\pi$ is a subexpression of some reduced expression for $w$.
For $\al=(\al_1,\ldots,\al_r)\in\Z_2^r$, set $c^{\al}=c_1^{\al_1}\cdots c_r^{\al_r}$.
Observe that the symmetric group $\mf S_r$ acts naturally on $\mc C_r$ by permutating
the generators $c_1,\ldots,c_r$. We shall denote this action by $z\mapsto w\cdot z$
for $z\in\mc C_r, w\in\mf S_r$. Then we have
$$
w\cdot c^\al=c^{\al_1}_{w(1)}\cdots c^{\al_r}_{w(r)}=\pm c^{\al\cdot w^{-1}},
$$
where $\al\cdot w=(\al_{w(1)},\ldots,\al_{w(r)})$,  for $\al\in\Z_2^r, w\in\mf S_r$, is the place permutation action.
Moreover, by~\eqref{commute-Tc}, we have
\begin{equation}\label{ccT}
c^{\al_{k}}_{k}c^{\al_{k+1}}_{k+1}T_{s_k}=T_{s_k}c^{\al_{k}}_{k+1}c^{\al_{k+1}}_{k}+(q-1)(c_k^{\al_k+\al_{k+1}}-c^{\al_k}_{k+1}c^{\al_{k+1}}_{k})
\end{equation}
Thus, by induction on the length $\ell(w)$ of $w$, one proves
the following formula: for any $\al\in\Z_2^r$ and $w\in\mf S_r$
\begin{equation}\label{HC-PBW}
c^\al T_w=T_w(w^{-1}\cdot c^\al)+\sum_{\pi<w,\beta\in\Z_2^r}a^{\al,w}_{\pi,\beta}T_\pi c^{\beta}
=\pm T_w c^{\al\cdot w}+\sum_{\pi<w,\beta\in\Z_2^r}a^{\al,w}_{\pi,\beta}T_\pi c^{\beta},
\end{equation}
for some $a^{\al,w}_{\pi,\beta}\in R$.
Then, by the results \cite[Proposition 2.1]{JN} and \cite[Theorem 3.6]{BK} over fields,  it is easy to deduce that the following holds.

\begin{lem}\label{stdbs}
The superalgebra $\HCR$ is a free $R$-module. Moreover, both the sets $\mathscr B=\{T_w c^\al\mid w\in\mf S_r,\al\in\Z_2^r\}$
and $\{c^\al T_w \mid w\in\mf S_r,\al\in\Z_2^r\}$  form bases for $\HCR$.
\end{lem}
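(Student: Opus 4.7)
My plan is to separate the assertion into a spanning part and a linear-independence part, handling the latter via a generic-fibre argument over the universal base $\sA$.

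For spanning, I would show by induction on the word length in the generators $T_1,\ldots,T_{r-1},c_1,\ldots,c_r$ that every element of $\HCR$ lies in $\sum_{w,\al}R\cdot T_w c^\al$. Any $c$-monomial can be normalized to the form $c^\beta$ using \eqref{Cl}; the braid and commutation relations in \eqref{commute-Tc} allow rearrangement of the $T$-factors into a standard $T_w$; and the identity \eqref{HC-PBW} is precisely the tool for pushing a leading $c^\al$ past $T_w$ at the cost of Bruhat-lower corrections $T_\pi c^\beta$. The formula \eqref{HC-PBW} also shows that $c^\al T_w=\pm T_w c^{\al\cdot w}+\sum_{\pi<w,\beta}a^{\al,w}_{\pi,\beta}T_\pi c^\beta$, a transition which is unitriangular in the Bruhat order on $\fS_r$; hence $\{T_w c^\al\}$ and $\{c^\al T_w\}$ have the same $R$-span, and one is a basis iff the other is.

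For linear independence, it suffices to treat the universal case $R=\sA=\Z[\bsq]$, because $\HCR=\HC\otimes_\sA R$ by definition. Let $F$ be the free $\sA$-module on formal symbols $\{e_{w,\al}\mid w\in\fS_r,\al\in\Z_2^r\}$ and $\pi\colon F\twoheadrightarrow\HC$ the surjection $e_{w,\al}\mapsto T_w c^\al$ supplied by the spanning argument just outlined. Setting $K=\Q(\bsq)$, the cited results of Jones--Nazarov \cite[Prop.~2.1]{JN} and Brundan--Kleshchev \cite[Thm.~3.6]{BK}, applied to the affine Hecke--Clifford superalgebra and its quotient by $X_1$ (which forces all $X_i=0$ via the affine relations, cf.\ Remark \ref{JN}), give $\dim_K(\HC\otimes_\sA K)=r!\,2^r$. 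The induced map $\pi_K\colon F_K\to\HC_K$ is then a surjection between $K$-vector spaces of equal finite dimension, hence an isomorphism. Exactness of localization yields $\ker(\pi)\otimes_\sA K=0$, so $\ker(\pi)$ is a torsion $\sA$-submodule of the torsion-free module $F$; since $\sA$ is an integral domain this forces $\ker(\pi)=0$, whence $\mathscr B$ is an $\sA$-basis of $\HC$. Base change gives the first assertion over every $R$, and the unitriangular transformation from the first paragraph transfers the result to the second basis $\{c^\al T_w\}$.

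The main obstacle is the linear-independence step: one cannot pull linear independence back from the field $K$ directly, since that would presuppose the injectivity of $\HC\to\HC_K$, equivalently the flatness of $\HC$ over $\sA$, which is precisely what we are trying to prove. The device of first producing a surjection from a free module of the anticipated rank, and then comparing dimensions after tensoring with the fraction field while exploiting the torsion-freeness of a module over a domain, neatly resolves this circularity.
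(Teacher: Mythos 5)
Your spanning argument and your generic-fibre step over $\sA$ are sound and essentially reproduce the paper's strategy: span via \eqref{HC-PBW}, then linear independence by passing to the fraction field of $\sA$ and invoking \cite[Theorem 3.6]{BK}. There is, however, one genuine gap, plus a mischaracterisation of the difficulty.

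The gap is your reduction of a general $R$ to the universal case: you assert that $\HCR=\HC\otimes_\sA R$ ``by definition''. It is not. The superalgebra $\HCR$ is defined by generators and relations over $R$, and the identification $\HCR\cong\HC\otimes_\sA R$ is precisely Corollary \ref{HC-PBW2}(1), which the paper deduces \emph{from} this lemma. A priori the presentation only yields an $R$-algebra epimorphism $f:\HCR\to\HC\otimes_\sA R$ (the generators of the target satisfy the defining relations); its injectivity is exactly what is at stake. The paper closes the loop by noting that $f$ carries the spanning set $\mathscr B\subseteq\HCR$ onto $\{T_wc^\al\otimes 1\}$, which is an $R$-basis of $\HC\otimes_\sA R$ once freeness over $\sA$ is established; a spanning set whose image under a linear map is linearly independent is itself linearly independent, so $\mathscr B$ is a basis and $f$ is an isomorphism. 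You need this (or an equivalent) step; as written your proof covers only $R=\sA$.

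On your final paragraph: the ``circularity'' you describe is not actually present. To pull linear independence back from $K=\Q(\bsq)$ one does not need injectivity of $\HC\to\HC\otimes_\sA K$. A relation $\sum a_{w,\al}T_wc^\al=0$ in $\HC$ with $a_{w,\al}\in\sA$ maps under the (possibly non-injective) canonical map to the relation $\sum a_{w,\al}(T_wc^\al\otimes 1)=0$; if these images are linearly independent over $K$, then all $a_{w,\al}$ vanish in $K$ and hence in $\sA\subseteq K$. That is the paper's one-line argument. Your detour through the free module $F$, the dimension count $r!\,2^r$, and the torsion-freeness of $\ker(\pi)$ is correct, just longer than necessary; both versions rely on the same identification of $\HC\otimes_\sA K$ with the presented Hecke--Clifford superalgebra over $K$ so that the cited dimension/basis result applies.
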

\begin{proof} By \eqref{HC-PBW}, it suffices to prove that $\mathscr B$ forms a basis. First, the set spans $\HCR$ by \eqref{HC-PBW}. Suppose next $R=\sA$. By base change to the fraction field of $\sA$ and \cite[Theorem 3.6]{BK}, we see that $\mathscr B$ is linearly independent and, hence, forms an $\sA$-basis for $\HC$. For a general $R$, specialization to $R$ by sending $\bsq$ to $q$ gives a basis $\{T_wc^\al\otimes 1\mid w\in\mf S_r,\al\in\Z_2^r\}$ for $\HC\otimes_\sA R$. Now, the presentation of $\HCR$ gives an $R$-algebra epimorphism $f:\HCR\to\HC\otimes_\sA R$, sending $T_i,c_j$ to $T_i\otimes 1,c_j\otimes 1$, respectively, and sending $\mathscr B$ to a basis. This forces that $\mathscr B$ itself must be a basis (and that $f$ is an isomorphism).
\end{proof}

\begin{cor}\label{HC-PBW2} Let $R$ be a commutative ring which is an $\sA$-module via the map $\sA\to R, \bsq\mapsto q$.
\begin{enumerate}
\item There is an algebra isomorphism $\HCR\cong\HC\otimes_{\sA}R$.

\item As a module over the subalgebra $\HR$, $\HCR$ is $\HR$-free with basis $\{c^\al\mid\al\in\Z_2^r\}$.
\end{enumerate}
\end{cor}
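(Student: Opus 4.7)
My plan is to read off both parts of the corollary directly from the PBW basis established in Lemma~\ref{stdbs}, together with the universal property built into the presentation of $\HCR$.

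For part~(1), I would first observe that the defining relations \eqref{Cl}, \eqref{commute-Tc} of $\HCR$ are the same relations that define $\HC$, with $\bsq$ replaced by its image $q\in R$. Hence the map $T_i\mapsto T_i\otimes 1$, $c_j\mapsto c_j\otimes 1$ extends by the universal property to an $R$-algebra homomorphism $f:\HCR\to\HC\otimes_\sA R$; this is exactly the map $f$ appearing at the end of the proof of Lemma~\ref{stdbs}. It is surjective because the images generate $\HC\otimes_\sA R$ as an $R$-module (apply \eqref{HC-PBW} to $\HC$ and then tensor). The last paragraph of the proof of Lemma~\ref{stdbs} already shows injectivity: the spanning set $\mathscr B$ in $\HCR$ maps onto an $R$-basis of $\HC\otimes_\sA R$, forcing $\mathscr B$ to be $R$-linearly independent and $f$ to be an isomorphism. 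So (1) is really a by-product of the proof of Lemma~\ref{stdbs} and requires only careful bookkeeping.

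For part~(2), I would use the basis $\mathscr B=\{T_w c^\al\mid w\in\mf S_r,\,\al\in\Z_2^r\}$ of $\HCR$ from Lemma~\ref{stdbs}, reorganised as
\[
\HCR \;=\; \bigoplus_{\al\in\Z_2^r}\bigoplus_{w\in\mf S_r} R\,T_w c^\al
\;=\; \bigoplus_{\al\in\Z_2^r}\Bigl(\bigoplus_{w\in\mf S_r} R\,T_w\Bigr) c^\al
\;=\; \bigoplus_{\al\in\Z_2^r}\HR\, c^\al,
\]
where the middle equality uses that left multiplication of $T_w$ by $T_{s_i}$ stays in the span of the $T_{w'}$'s (standard fact for $\HR$). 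This displays $\HCR$ as a free left $\HR$-module with basis $\{c^\al\mid\al\in\Z_2^r\}$.

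I do not anticipate a serious obstacle: both parts are essentially formal consequences of Lemma~\ref{stdbs}. The only point requiring a little care is making explicit the universal-property argument in~(1), which Lemma~\ref{stdbs}'s proof used implicitly, and checking that the decomposition in~(2) is indeed a direct sum of free $\HR$-modules rather than just an $R$-module direct sum. Both checks are immediate from the freeness of $\mathscr B$ over $R$ and the fact that $\{T_w\}_{w\in\mf S_r}$ is an $R$-basis of $\HR$.
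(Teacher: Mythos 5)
Your proposal is correct and follows essentially the same route as the paper: part~(1) is extracted from the isomorphism $f$ already established at the end of the proof of Lemma~\ref{stdbs}, and part~(2) is obtained by splitting the basis $\mathscr B$ into the disjoint union $\dot\bigcup_{\al\in\Z_2^r}\{T_wc^\al\mid w\in\mf S_r\}$, identifying each piece with the free $\HR$-module $\HR c^\al$. No gaps.
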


An $l$-tuple $(\la_1,\ldots,\la_l)$ of nonnegative integers $\la_i\in\mathbb N$ is called a {\it composition} of $r$ if $\sum_{i=1}^l\la_i=r$.
For a composition $\la=(\la_1,\ldots,\la_l)$ of $r$, denote by $\mf S_\la=\mf S_{\la_1}\times\cdots\times\mf S_{\la_l}$ the standard Young subgroup of $\mf S_r$ and let $\mc D_\la$ be the set of minimal length right coset representatives of $\mf S_\la$ in $\mf S_r$.
Let $\mc H_{\la,R}$ and $\mc H_{\la,R}^{\sf c}$ be the associated subalgebras of $\mc H_{r,R}$ and $\HCR$, respectively.
Then clearly $\mc H_{\la,R}^{\sf c}\cong\mc H_{\la,R}\otimes\mc C_r$ as $R$-supermodules and moreover we have algebra isomorphisms
\begin{equation}\label{Parabolic-subalgebra}
\aligned
\mc H_{\la,R}\cong \mc H_{\la_1,R}\otimes\cdots\otimes\mc H_{\la_l,R}, \quad
\mc H^{\mathsf c}_{\la,R}\cong \mc H^{\mathsf c}_{\la_1,R}\otimes\cdots\otimes\mc H^{\mathsf c}_{\la_l,R}.
\endaligned
\end{equation}
We remind the reader that, for two superalgebras $\mc S$ and $\mc T$ over $R$, the (super) tensor product $\mc S\otimes\mc T$ is naturally a superalgebra, with multiplication defined by
$$
(s\otimes t)(s'\otimes t')=(-1)^{\widehat{t}\cdot\widehat{s'}}(ss')\otimes (tt') \quad \text{for all }s,s'\in\mc S,t,t'\in\mc T,
$$
where $t,s'$ are homogeneous. Here we used the notation $\widehat{a}=0$ if $a$ is even and $\widehat{a}=1$ otherwise for a homogeneous element $a$ in a superspace.

Denote by $\Lambda(n,r)\subset \mathbb N^n$ the set of compositions of $r$ with $n$ parts.
Given $\la\in\La(n,r)$, set
\begin{equation}\label{xla}
x_\la=\sum_{w\in\mf S_\la}T_{w},\quad
y_{\la}=\sum_{w\in\mf S_\la}(-q^{-1})^{\ell(w)}T_{w},
\end{equation}
where $\ell(w)$ is the length of $w$.
As a super analog of the $q$-Schur algebra (cf. \cite[2.9]{DJ2}) or a quantum analog of the Schur superalgebra of type $\tt Q$ (cf. \cite{BK}), we introduce the {\it \qq-Schur superalgebra}:
\begin{equation}\label{QqSchur-defn}
\mc Q_q(n,r;R)={\rm End}_{\HCR}\bigg(\bigoplus_{\la\in\La(n,r)}x_\la\HCR\bigg).
\end{equation}
In particular, we write
\begin{equation}\label{v-Schur}
\mc Q_\bsq(n,r):=\mc Q_\bsq(n,r;\sA)\text{ and } \mc Q_\bsv(n,r):=\mc Q_\bsq(n,r;\mc Z),
\end{equation}
where $\bsq=\bsv^2$.

Note that $\bigoplus_{\la\in\La(n,r)}x_\la\HR$ is a direct sum of $q$-permutation modules, which is known as the tensor space used in defining $q$-Schur algebras. Since $x_\la\HCR\cong x_\la\HR\otimes_{\HR}\HCR$ as $\HCR$-supermodules, which will be called a {\it $q$-permutation supermodule}, it is clear we have $\HCR$-module isomorphism
\begin{equation}\label{ind}
\bigoplus_{\la\in\La(n,r)}x_\la\HCR\cong \bigg(\bigoplus_{\la\in\La(n,r)}x_\la\HR\bigg)\otimes_{\HR}\HCR,
\end{equation}
which is an induced $\HCR$-module from the tensor space. We will see in \S6 that this module itself can be regarded as a tensor product of a free $R$-supermodule.

Given a right $\HCR$-supermodule $M$, we can twist the action with $\varphi$ given in Lemma~\ref{anti-inv} to get a new $\HCR$-supermodule $M^{\varphi}$. Thus, as an $R$-module $M^{\varphi}=M$, the new $\HCR$-action is defined by
$$m\cdot h=m\varphi(h),\quad\text{ for all }\quad h\in\HCR,m\in M.$$
Clearly $\mc H_{r,R}$ is stable under the involution $\varphi$.
If $M$ is a $\mc H_{r,R}$-module, we can define $M^{\varphi}$ similarly.
It is known that
$$(x_\la\mc H_{r,R})^\varphi\cong y_{\la^\circ}\mc H_{r,R}\cong y_\la\mc H_{r,R}$$ (see \cite[(2.1)]{DJ2}, or \cite[Lemma 7.39]{DDPW}), where $\la^\circ=(\la_l,\la_{l-1},\ldots,\la_1)$ if $\la=(\la_1,\ldots,\la_l)$.
This implies $(x_\la \HCR)^{\varphi}\cong y_\la\HCR$ due to the facts $x_\la \HCR\cong x_\la\mc H_{r,R}\otimes_{\mc H_{r,R}}\HCR$
and $y_\la \HCR\cong y_\la\mc H_{r,R}\otimes_{\mc H_{r,R}}\HCR$.
Hence, we obtain a super analog of \cite[Theorem~2.24(i),(ii)]{DJ2}
\begin{equation}\label{y-version}
\mc Q_q(n,r;R)\cong{\rm End}_{\HCR}\bigg(\bigoplus_{\la\in\La(n,r)}y_\la\HCR\bigg).
\end{equation}

\begin{rem}\label{tau twist} We may also use the anti-involution $\tau$ in Lemma \ref{anti-inv}(3) to turn a left $\HCR$-supermodule to a right $\HCR$-supermodule or vice versa.
More precisely, if $M$ is a finite dimensional left $\HCR$-supermodule, then we define a right $\HCR$-supermodule
$M^{\tau}$ by setting $M^\tau=M$ as a $R$-module and making $\HCR$-action through $\tau$:
$$
m\cdot h=\tau(h)m,\quad \forall h\in\HCR, m\in M.
$$
In particular, if $M=\HCR y_\la$, then
\begin{equation}\label{iota-twist}
(\HCR y_\la)^\tau\cong \tau(\HCR y_\la)=\tau(y_\la)\HCR= x_\la\HCR
\end{equation}
by \cite[(2.1)]{DJ2} again.

\end{rem}

%
\section{$q$-Permutation supermodules for $\HCR$}\label{sec:permutation supermodules}

The $q$-permutation supermodules $x_\la\HCR$ share certain nice properties with $q$-permutations modules $x_\la\mc H_{r,R}$ of the Hecke algebra $\mc H_{r,R}$ as we will see below.

Recall, for a composition $\la$ of $r$, $\mc D_\la$ is the set of minimal length
right coset representatives of $\mf S_{\la}$ in $\mf S_r$.
\begin{lem} \label{ylaHC}
Let $\la$ be a composition of $r$. Then:--
\begin{enumerate}
\item the right $\HCR$-supermodule $x_{\la}\HCR$ (resp., $y_{\la}\HCR$) is $R$-free with basis
$$\{x_\la T_dc^\al\mid d\in\mc D_\la,\al\in\Z_2^r\}\quad\text{resp., }\{y_\la T_dc^\al\mid d\in\mc D_\la,\al\in\Z_2^r\};$$

\item $x_{\la}\HCR=\{h\in\HCR\mid T_{s_k}h=qh, \forall s_k\in\mf S_\la\}$.
\item $y_{\la}\HCR=\{h\in\HCR\mid T_{s_k}h=-h, \forall s_k\in\mf S_\la\}$.
\end{enumerate}
Similar statements hold for the left $\HCR$-supermodules $\HCR x_{\la}$ and $\HCR y_{\la}$.
\end{lem}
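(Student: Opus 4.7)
The plan is to reduce each assertion to the corresponding classical statement for the Hecke algebra $\mc H_{r,R}$, using the $\HR$-module decomposition provided by Corollary~\ref{HC-PBW2}(2). That corollary furnishes an $R$-basis $\{c^\al\}_{\al \in \Z_2^r}$ of $\HCR$ over $\HR$; because left multiplication by any $T_w$ carries $T_u c^\al$ to $(T_wT_u)c^\al$ by associativity, the decomposition $\HCR = \bigoplus_{\al} \HR \cdot c^\al$ is actually one of left $\HR$-modules, with each summand free of rank one over $\HR$.

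For (1), I would multiply this decomposition on the left by $x_\la$ (respectively $y_\la$) to obtain $x_\la \HCR = \bigoplus_\al (x_\la\HR)\,c^\al$ and $y_\la \HCR = \bigoplus_\al (y_\la\HR)\,c^\al$. The claimed bases then follow immediately from the classical fact that $\{x_\la T_d\}_{d \in \mc D_\la}$ (resp.\ $\{y_\la T_d\}_{d \in \mc D_\la}$) is an $R$-basis for $x_\la \HR$ (resp.\ $y_\la \HR$); cf.\ \cite[Lemma 7.39]{DDPW}.

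For (2), the inclusion $x_\la \HCR \subseteq \{h\mid T_{s_k}h = qh,\ \forall\, s_k \in \mf S_\la\}$ is immediate from $T_{s_k} x_\la = q\,x_\la$ for $s_k \in \mf S_\la$. For the reverse inclusion, take $h \in \HCR$ with $T_{s_k} h = qh$ for all $s_k \in \mf S_\la$ and write $h = \sum_\al h_\al c^\al$ uniquely with $h_\al \in \HR$. Since the left $\HR$-action respects the $\Z_2^r$-grading by the factor $c^\al$, the relation $T_{s_k}h = qh$ is equivalent to $T_{s_k}h_\al = qh_\al$ for every $\al$ individually. The classical Hecke algebra characterisation $x_\la \HR = \{g \in \HR \mid T_{s_k}g = qg,\ \forall\, s_k \in \mf S_\la\}$ then gives $h_\al \in x_\la \HR$, whence $h \in x_\la \HCR$. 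Part (3) is entirely parallel, replacing $q$ by $-1$ and $x_\la$ by $y_\la$.

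The left $\HCR$-module analogues are handled in exactly the same way, but using instead the alternative PBW basis $\{c^\al T_w\}$ of Lemma~\ref{stdbs}, which yields a right $\HR$-module decomposition $\HCR = \bigoplus_\al c^\al\HR$; alternatively one may transport the right-module statements via the anti-involution $\tau$ of Lemma~\ref{anti-inv}(3), combined with \eqref{iota-twist}. I do not anticipate a genuine obstacle: the argument is a clean reduction to the Hecke-algebra case. The only technical input beyond Lemma~\ref{stdbs} and Corollary~\ref{HC-PBW2} is the compatibility of left (resp.\ right) $\HR$-multiplication with the $c^\al$-grading, which is a direct consequence of the PBW basis itself.
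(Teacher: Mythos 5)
Your proposal is correct and matches the paper's own argument: parts (1)--(3) are reduced to the classical statements for $\mc H_{r,R}$ via the free decomposition $\HCR=\bigoplus_{\al}\mc H_{r,R}c^\al$ of Corollary~\ref{HC-PBW2}(2), exactly as in the paper (which for (2) writes $h=\sum_\al h_\al$ with $h_\al\in\mc H_{r,R}c^\al$ and compares components). For the left-module analogues the paper uses the second of your two options, transport by the anti-involution $\tau$ of Lemma~\ref{anti-inv}(3), so no genuinely different route is involved.
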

\begin{proof}
Part (1) is clear. We now prove part (2). Since $T_{s_k}x_\la=qx_\la$ for all $s_k\in\mf S_\la$,
we obtain $x_{\la}\HCR\subseteq\{h\in\HCR\mid T_{s_k}h=qh, \forall s_k\in\mf S_\la\}=:\mc E$.

Conversely, given $h=\sum_{w\in\mf S_r, \al\in\Z_2^r}h_{w,\al}T_wc^\al\in\mc E$ with $h_{w,\al}\in R$,
write $$h_\al=\sum_{w\in\mf S_r}h_{w,\al}T_wc^\al=\bigg(\sum_{w\in\mf S_r}h_{w,\al}T_w\bigg)c^\al.$$
Then $h_\al\in\mc H_{r,R}c^\al$ and $h$ can be written as $h=\sum_{\al\in\Z_2^r}h_\al$.
Assume $T_{s_k}h=qh$ for all $s_k\in\mf S_\la$. Then we obtain
$$
\sum_{\al\in\Z_2^r} (T_{s_k}h_\al-qh_\al)=0
$$
for $s_k\in\mf S_\la$. Observe that $T_{s_k}h_\al-qh_\al\in\mc H_{r,R}c^\al$ for each $\al\in\Z_2^r$.
Then, by Corollary~\ref{HC-PBW2}(2), one may deduce that
$
T_{s_k}h_\al=qh_\al
$ for all $s_k\in\mf S_\la$ and $\al\in\Z_2^r$.
This implies $T_{s_k}(\sum_{w\in\mf S_r}h_{w,\al}T_w)=q(\sum_{w\in\mf S_r}h_{w,\al}T_w)$ for all $s_k\in\mf S_\la$
since $c^\al$ is invertible.
Then using the classical result for $\mc H_{r,R}$, we obtain $\sum_{w\in\mf S_r}h_{w,\al}T_w\in x_\la\mc H_{r,R}$.
This means $h_\al\in x_\la\HCR$ for $\al\in\Z_2^r$.
Therefore, part (2) is proved. The proof of (3) is similar.

Applying the anti-involution $\tau$ in Lemma~\ref{anti-inv}(3) to (1)--(3) gives the last assertion.
\end{proof}

\begin{lem}\label{hom-interc}
The following isomorphism of $R$-modules holds for compositions $\la, \mu$ of $r$:
$$
{\rm Hom}_{\HCR}(x_\mu\HCR,x_\la\HCR)\cong x_\la\HCR\cap\HCR x_\mu.
$$
\end{lem}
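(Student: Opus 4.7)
The plan is to establish the isomorphism via the evaluation map $\Phi\colon {\rm Hom}_{\HCR}(x_\mu\HCR, x_\la\HCR) \to x_\la\HCR \cap \HCR x_\mu$ defined by $\Phi(f) = f(x_\mu)$. $R$-linearity is built in, so the content is to verify that $\Phi$ is well-defined, injective, and surjective.

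For well-definedness, $f(x_\mu) \in x_\la\HCR$ is immediate. The key point is to show $f(x_\mu) \in \HCR x_\mu$. Here I would use the identity $x_\mu T_{s_k} = q\,x_\mu$ for every simple reflection $s_k \in \mf S_\mu$, so by right $\HCR$-linearity of $f$ one obtains $f(x_\mu)\,T_{s_k} = q\,f(x_\mu)$ for all such $s_k$. I then invoke the left-module counterpart of Lemma~\ref{ylaHC}(2)---asserted in the final sentence of that lemma---which characterises $\HCR x_\mu$ precisely as the set of $h \in \HCR$ satisfying $h\,T_{s_k} = q\,h$ for all $s_k \in \mf S_\mu$. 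This forces $f(x_\mu) \in \HCR x_\mu$, so $\Phi$ lands in the intended target.

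Injectivity is immediate since $x_\mu$ generates $x_\mu\HCR$ as a cyclic right $\HCR$-module, whence $f$ is determined by $f(x_\mu)$. For surjectivity, given $y \in x_\la\HCR \cap \HCR x_\mu$, I would write $y = g\,x_\mu$ for some $g \in \HCR$ and define $f_y\colon x_\mu\HCR \to x_\la\HCR$ by $f_y(x_\mu h) = g\,x_\mu h = y\,h$ for $h \in \HCR$. Well-definedness is clean: if $x_\mu h = x_\mu h'$ then $g\,x_\mu h = g\,x_\mu h'$, so the recipe depends neither on the chosen presentation of the input nor on the choice of $g$. The image lies in $x_\la\HCR$ because $y$ does, right $\HCR$-linearity is automatic, and $\Phi(f_y) = f_y(x_\mu) = y$. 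The only subtle step is the characterisation invoked in well-definedness; once the left-module version of Lemma~\ref{ylaHC}(2) is in hand, the rest of the argument is routine bookkeeping.
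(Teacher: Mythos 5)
Your proposal is correct and follows essentially the same route as the paper: evaluation at $x_\mu$, well-definedness via $f(x_\mu)T_{s_k}=qf(x_\mu)$ together with the left-module version of Lemma~\ref{ylaHC}, injectivity from cyclicity, and surjectivity by sending $z=gx_\mu$ to $x_\mu h\mapsto zh$. Your explicit verification that $f_y$ is well defined (writing $y=gx_\mu$ so the value depends only on $x_\mu h$) just fills in what the paper labels ``straightforward to check.''
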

\begin{proof} The proof is standard with the required isomorphism give by the map
\begin{align*}
 {\rm Hom}_{\HCR}(x_\mu\HCR,x_\la\HCR)\longrightarrow x_\la\HCR\cap\HCR x_\mu,\quad f\longmapsto f(x_\mu).
\end{align*}
\end{proof}


We will prove that the $R$-modules $x_\la\HCR\cap\HCR x_\mu$ is free for the special case $\la=\mu=(r)$ in \S4 and for the general case in \S5.
We need some preparation. In the following, we will display a new basis for $\HCR$ and $x_\la\HCR$.
We start with recalling several facts for Hecke algebras $\HR$.

Given two compositions $\la,\mu$ of $r$, let $\mc D_{\la,\mu}=\mc D_\la\cap\mc D^{-1}_\mu$, where $\mc D^{-1}_\mu=\{d^{-1}\mid d\in\mc D_\mu\}$ is the set of minimal length
left coset representatives of $\mf S_{\mu}$ in $\mf S_r$.
\begin{lem}[{\cite{Car}, \cite[Lemma 1.6]{DJ1}}]\label{DJ}
Suppose $\la,\mu$ are compositions of $r$.
\begin{enumerate}
\item The set $\mc D_{\la,\mu}$ is a system of $\mf S_\la$-$\mf S_\mu$ double coset representatives in $\mf S_r$.

\item The element $d\in\mc D_{\la,\mu}$ is the unique element of minimal length in $\mf S_\la d\mf S_\mu$.

\item If $d\in\mc D_{\la,\mu}$, there exists a composition $\nu(d)$ of $r$ such that $\fS_{\nu(d)}=d^{-1}\mf S_\la d\cap \mf S_\mu$.

\item Each element $w\in\mf S_\la d\mf S_\mu$ can be written uniquely as $w=ud\sigma$ with $u\in\mf S_\la$ and $\sigma\in\mc D_{\nu(d)}\cap \mf S_\mu$.
Moreover, $\ell(w)=\ell(u)+\ell(d)+\ell(\sigma)$.
\end{enumerate}
\end{lem}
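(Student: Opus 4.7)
The plan is to prove parts (1) and (2) simultaneously via the standard minimum-length argument. Every double coset $\fS_\la w \fS_\mu$ contains at least one element of minimal length; call it $d$. By minimality together with the exchange condition, any simple reflection $s\in\fS_\la$ satisfies $\ell(sd) > \ell(d)$ (otherwise we could shorten $d$ within the same double coset), which forces $d\in\mc D_\la$, and symmetrically $d\in\mc D_\mu^{-1}$. For uniqueness, suppose $d'\in\mc D_{\la,\mu}$ lies in the same double coset, so $d' = u\, d\, v$ with $u\in\fS_\la$, $v\in\fS_\mu$. By induction on $\ell(u)+\ell(v)$, using the exchange condition to push a leftmost simple reflection of $u$ or rightmost simple reflection of $v$ through and appealing to the reduced-expression property of minimal coset representatives, one concludes $u=v=1$, hence $d'=d$. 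This yields (1) and (2) together with the fact $\ell(udv)\geq \ell(d)$ for all such decompositions.

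For (3), one must show that the intersection $d^{-1}\fS_\la d \cap \fS_\mu$ is a \emph{standard} parabolic subgroup, i.e. generated by a set of simple reflections $\{s_i : i\in J\}$ for some $J$. The combinatorial approach is to use the fact that $d$ is determined by two ordered set partitions of $\{1,\dots,r\}$: the blocks of $\mu$ and their images under $d$ intersected with the blocks of $\la$. Concretely, the simple reflections in $\fS_\mu$ that lie in $d^{-1}\fS_\la d$ are exactly those $s_k$ with $k,k+1$ belonging to the same $\mu$-block \emph{and} with $d(k), d(k+1)$ belonging to the same $\la$-block; refining $\mu$ by cutting each block at these obstructions produces the composition $\nu(d)$. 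One then verifies that these simple reflections generate the whole intersection by showing any element of the intersection, acting on $\{1,\ldots,r\}$, preserves both partitions simultaneously.

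For (4), given $w=u_0 d v_0\in\fS_\la d\fS_\mu$ with $u_0\in\fS_\la$, $v_0\in\fS_\mu$, use (3) to write $v_0=\tau\sigma$ uniquely with $\tau\in\fS_{\nu(d)}$ and $\sigma\in\mc D_{\nu(d)}\cap\fS_\mu$. Since $d\tau d^{-1}\in\fS_\la$ by definition of $\nu(d)$, setting $u=u_0(d\tau d^{-1})\in\fS_\la$ gives $w=ud\sigma$. Uniqueness of this expression follows because a further equality $ud\sigma = u'd\sigma'$ forces $\sigma^{-1}\sigma'\in d^{-1}\fS_\la d\cap\fS_\mu=\fS_{\nu(d)}$, contradicting $\sigma,\sigma'\in\mc D_{\nu(d)}$ unless they are equal. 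For the length additivity, note $\ell(d\sigma)=\ell(d)+\ell(\sigma)$ because $\sigma\in\mc D_{\nu(d)}\cap\fS_\mu\subseteq\mc D_\mu$ has the property that no simple reflection on the right reduces $d\sigma$ to something shorter than $d$ without re-entering $\fS_\la$; then $\ell(ud\sigma)=\ell(u)+\ell(d\sigma)$ follows from the minimality of $d$ combined with a standard reduced-expression argument involving $\mc D_\la$.

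The main obstacle I anticipate is part (3): verifying that the intersection of a \emph{conjugate} of a standard parabolic with another standard parabolic is again a standard parabolic is a nontrivial structural statement. In the general Coxeter setting this is Howlett's theorem, and one would either invoke it directly or prefer the type-$A$ combinatorial description above. Once (3) is in hand, the uniqueness in (4) and the length formula are bookkeeping that reduces to the fact that minimal double coset representatives absorb length additively — itself a consequence of the exchange condition applied carefully to reduced expressions of $u$, $d$, and $\sigma$.
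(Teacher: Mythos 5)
The paper gives no proof of this lemma: it is quoted as a known result from Carter and from Dipper--James \cite[Lemma 1.6]{DJ1}, so there is no in-paper argument to measure yours against. Your outline of (1)--(3) follows the standard route and is essentially sound. In (3), however, you should make explicit the point on which everything turns: because $d\in\mc D_{\la,\mu}$, each intersection $R^\mu_j\cap d^{-1}(R^\la_i)$ is an \emph{interval} of consecutive integers. Without that, ``cutting each $\mu$-block at the obstructions'' need not recover the full group $d^{-1}\fS_\la d\cap\fS_\mu$; the verification that the listed simple reflections generate the whole intersection is exactly where intervality is used, and it is the type-$A$ substitute for Howlett's theorem that you correctly flag.

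The genuine gap is in the length formula of (4). First, the inclusion $\mc D_{\nu(d)}\cap\fS_\mu\subseteq\mc D_\mu$ is false: $\mc D_\mu\cap\fS_\mu=\{1\}$, since no nontrivial element of $\fS_\mu$ is a minimal right coset representative of $\fS_\mu$. The identity $\ell(d\sigma)=\ell(d)+\ell(\sigma)$ for $\sigma\in\fS_\mu$ has a one-line correct justification you did not give: $d\in\mc D_\mu^{-1}$ means $d^{-1}\in\mc D_\mu$, whence $\ell(\sigma^{-1}d^{-1})=\ell(\sigma^{-1})+\ell(d^{-1})$. More seriously, the remaining step $\ell(ud\sigma)=\ell(u)+\ell(d\sigma)$ requires $d\sigma\in\mc D_\la$ for every $\sigma\in\mc D_{\nu(d)}\cap\fS_\mu$ --- equivalently, that $\mc D_\la\cap\fS_\la d\fS_\mu=\{d\sigma\mid\sigma\in\mc D_{\nu(d)}\cap\fS_\mu\}$ --- and this is the actual content of part (4), not bookkeeping. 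Dismissing it as ``a standard reduced-expression argument involving $\mc D_\la$'' leaves the crux unproved; one must show $\ell(sd\sigma)>\ell(d\sigma)$ for every simple generator $s$ of $\fS_\la$, which is usually done via Deodhar's lemma (if $sd\sigma<d\sigma$ with $s\in\fS_\la$ simple, then since $sd>d$ one gets $sd=dt$ for a simple $t\in\fS_{\nu(d^{-1})}$, hence $sd\sigma=d(t\sigma)$ with $d^{-1}sd=t\in\fS_{\nu(d)}$, contradicting $\sigma\in\mc D_{\nu(d)}$). Incorporating that argument, together with the intervality point above, would close the proof.
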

Suppose $\la,\mu$ are compositions of $r$. Observe that $\mc D_{\mu,\la}=\mc D_\mu\cap\mc D^{-1}_\la=\mc D_{\la,\mu}^{-1}$ is a system of
$\mf S_\mu$-$\mf S_\la$ double coset representatives in $\mf S_r$ and hence $d\mf S_\mu d^{-1}\cap\mf S_\la$ is a standard Young subgroup of $\mf S_\la$
for each $d\in\mc D_{\la,\mu}$ by Lemma~\ref{DJ}(3). Thus $\nu(d^{-1})$ is well-defined via $\mf S_{\nu(d^{-1})}=d\mf S_\mu d^{-1}\cap\mf S_\la$
and moreover $\mf S_{\nu(d^{-1})}=d\mf S_{\nu(d)}d^{-1}$.
\begin{cor}\label{DJ-cor}
Suppose $\la,\mu$ are compositions of $r$ and $d\in\mc D_{\la,\mu}$.
\begin{enumerate}
\item For each $u\in\mf S_\la$, the element $ud$ can be written uniquely as
$ud=u' d\tau$ with $u'\in \mc D^{-1}_{\nu(d^{-1})}\cap \mf S_\la$ and $\tau\in\mf S_{\nu(d)}$.
Moreover $\ell(ud)=\ell(u)+\ell(d)=\ell(u')+\ell(d)+\ell(\tau)$.

\item The coset $\mf S_\la d=\{u'd\tau\mid u'\in \mc D^{-1}_{\nu(d^{-1})}\cap \mf S_\la,\tau\in\mf S_{\nu(d)}\}$.

\item Each element $w\in\mf S_\la d\mf S_\mu$ can be written uniquely as $w=u'd\pi$ with $u'\in \mc D^{-1}_{\nu(d^{-1})}\cap \mf S_\la$
 and $\pi\in\mf S_\mu$.

\item For $d_1,d_2\in\mc D_{\la,\mu}$ and $u_1\in\mc D^{-1}_{\nu(d_1^{-1})}\cap \mf S_\la, u_2\in \mc D^{-1}_{\nu(d_2^{-1})}\cap \mf S_\la$, if $u_1\neq u_2$ or $d_1\neq d_2$, then $u_1d_1\mf S_\mu\cap u_2 d_2\mf S_\mu=\emptyset.$
\end{enumerate}
\end{cor}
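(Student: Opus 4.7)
The engine behind all four parts will be the compatibility of the factorization $\mf S_{\nu(d^{-1})}=d\mf S_{\nu(d)}d^{-1}$ (already noted in the paragraph preceding the corollary) with the length function. I will first establish as a preliminary lemma that conjugation by $d$ is length-preserving on $\mf S_{\nu(d)}$: for $\tau\in\mf S_{\nu(d)}\subseteq\mf S_\mu$, the hypothesis $d\in\mc D_\mu^{-1}$ gives $\ell(d\tau)=\ell(d)+\ell(\tau)$, while writing $d\tau=(d\tau d^{-1})d$ with $d\tau d^{-1}\in\mf S_{\nu(d^{-1})}\subseteq\mf S_\la$ and using $d\in\mc D_\la$ yields $\ell(d\tau)=\ell(d\tau d^{-1})+\ell(d)$. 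Comparing gives $\ell(\tau)=\ell(d\tau d^{-1})$, and the analogous statement with $d^{-1}$ in place of $d$ shows that conjugation by $d^{-1}$ preserves length on $\mf S_{\nu(d^{-1})}$ as well.

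For part (1), given $u\in\mf S_\la$, I will use that $\mc D_{\nu(d^{-1})}^{-1}$ is a set of minimal length \emph{left} coset representatives of $\mf S_{\nu(d^{-1})}$ in $\mf S_r$ to write $u=u'w'$ uniquely with $u'\in\mc D_{\nu(d^{-1})}^{-1}$, $w'\in\mf S_{\nu(d^{-1})}$, and $\ell(u)=\ell(u')+\ell(w')$; since $\mf S_{\nu(d^{-1})}\subseteq\mf S_\la$, one automatically has $u'\in\mf S_\la$. Setting $\tau=d^{-1}w'd\in\mf S_{\nu(d)}$, the identity $ud=u'd\tau$ is immediate. The length formula follows by combining $\ell(ud)=\ell(u)+\ell(d)$ (from $d\in\mc D_\la$), $\ell(u)=\ell(u')+\ell(w')$, and $\ell(w')=\ell(\tau)$ from the preliminary length-preservation. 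Uniqueness of the decomposition $u'd\tau$ follows from the standard argument: two such decompositions $u'_1d\tau_1=u'_2d\tau_2$ force $(u'_2)^{-1}u'_1=d\tau_2\tau_1^{-1}d^{-1}\in\mf S_\la\cap d\mf S_\mu d^{-1}=\mf S_{\nu(d^{-1})}$, so $u'_1$ and $u'_2$ lie in the same left coset of $\mf S_{\nu(d^{-1})}$ and must coincide, whence $\tau_1=\tau_2$.

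Part (2) is then a direct consequence of (1): the set $\mf S_\la d$ equals $\{ud\mid u\in\mf S_\la\}$, and (1) gives a bijection with $\mc D_{\nu(d^{-1})}^{-1}\cap\mf S_\la\times\mf S_{\nu(d)}$ via $ud\leftrightarrow(u',\tau)$; conversely $u'd\tau=(u'\,d\tau d^{-1})d\in\mf S_\la d$. For part (3), I combine Lemma \ref{DJ}(4) with (1): write $w=ud\sigma$ with $u\in\mf S_\la,\,\sigma\in\mc D_{\nu(d)}\cap\mf S_\mu$, then apply (1) to $ud$ to obtain $w=u'd(\tau\sigma)$ with $\pi=\tau\sigma\in\mf S_{\nu(d)}\cdot\mf S_\mu=\mf S_\mu$. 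Uniqueness of this presentation follows by exactly the same coset argument as in (1).

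For part (4), suppose for contradiction that $u_1d_1\mf S_\mu\cap u_2d_2\mf S_\mu\neq\emptyset$. Then $\mf S_\la d_1\mf S_\mu\cap\mf S_\la d_2\mf S_\mu\neq\emptyset$, and since $\mc D_{\la,\mu}$ is a full set of representatives for the double cosets (Lemma \ref{DJ}(1)), this forces $d_1=d_2=:d$. Writing $u_1d\pi_1=u_2d\pi_2$ with $\pi_i\in\mf S_\mu$ then contradicts the uniqueness in (3) unless $u_1=u_2$. The main technical point throughout is the length-preservation under conjugation by $d$ established at the outset; once that is in hand, every other step reduces to the standard bookkeeping for parabolic double cosets already packaged in Lemma \ref{DJ}.
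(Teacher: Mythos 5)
Your proof is correct, and it is exactly the argument the paper intends: the corollary is stated there without proof as a direct consequence of Lemma \ref{DJ} together with the preceding observation that $\mf S_{\nu(d^{-1})}=d\mf S_{\nu(d)}d^{-1}$, which is precisely the engine (length-preserving conjugation by $d$ plus standard left-coset decompositions with respect to $\mf S_{\nu(d^{-1})}$) that you make explicit. All four parts, including the uniqueness arguments via $\mf S_\la\cap d\mf S_\mu d^{-1}=\mf S_{\nu(d^{-1})}$, check out.
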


\begin{cor}\label{ylaTd}
Suppose $\la,\mu$ are compositions of $r$ and $d\in\mc D_{\la,\mu}$. Then
$$
x_\la T_d=\bigg(\sum_{u'\in\mc D^{-1}_{\nu(d^{-1})}\cap\mf S_\la}T_{u'}\bigg)T_d~ x_{\nu(d)}.
$$
\end{cor}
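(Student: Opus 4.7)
The plan is to expand $x_\la T_d = \sum_{u \in \mf S_\la} T_u T_d$ directly and reorganise the sum using the parametrisation of the right coset $\mf S_\la d$ given in Corollary \ref{DJ-cor}(1).

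First I would note that since $d \in \mc D_{\la,\mu} \subseteq \mc D_\la$, the standard length-additivity property for minimal right coset representatives gives $\ell(ud) = \ell(u) + \ell(d)$ for every $u \in \mf S_\la$, hence $T_u T_d = T_{ud}$. So
\[
x_\la T_d = \sum_{u \in \mf S_\la} T_{ud}.
\]
Next I would invoke Corollary \ref{DJ-cor}(1), which says that the assignment $u \mapsto (u', \tau)$, where $ud = u' d \tau$ with $u' \in \mc D^{-1}_{\nu(d^{-1})} \cap \mf S_\la$ and $\tau \in \mf S_{\nu(d)}$, is a bijection
\[
\mf S_\la \longleftrightarrow \bigl(\mc D^{-1}_{\nu(d^{-1})} \cap \mf S_\la\bigr) \times \mf S_{\nu(d)},
\]
and moreover $\ell(u' d \tau) = \ell(u') + \ell(d) + \ell(\tau)$ for each such triple. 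The length additivity lets me factorise $T_{ud} = T_{u'd\tau} = T_{u'} T_d T_\tau$.

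Finally I would reindex the sum along this bijection:
\[
x_\la T_d = \sum_{\substack{u' \in \mc D^{-1}_{\nu(d^{-1})} \cap \mf S_\la \\ \tau \in \mf S_{\nu(d)}}} T_{u'} T_d T_\tau
= \Biggl(\sum_{u' \in \mc D^{-1}_{\nu(d^{-1})} \cap \mf S_\la} T_{u'}\Biggr) T_d \Biggl(\sum_{\tau \in \mf S_{\nu(d)}} T_\tau\Biggr),
\]
and recognise the rightmost factor as $x_{\nu(d)}$ by definition \eqref{xla}. There is no real obstacle here: the entire argument is a bookkeeping exercise that repackages the already-stated double coset decomposition into the multiplicative language of $\HR$, with length additivity (ensuring $T$'s multiply without Hecke correction terms) being the only non-formal ingredient, and it is precisely what $d \in \mc D_{\la,\mu}$ and the refinement in Corollary \ref{DJ-cor}(1) guarantee.
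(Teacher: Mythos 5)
Your proof is correct and is exactly the argument the paper intends: the corollary is stated as an immediate consequence of Corollary \ref{DJ-cor}, using $\ell(ud)=\ell(u)+\ell(d)$ (from $d\in\mc D_\la$) together with the unique factorisation $ud=u'd\tau$ with full length additivity to rewrite $\sum_{u\in\mf S_\la}T_{ud}$ as the stated product. No difference in approach worth noting.
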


It is known that $\HR$ has the basis $\{T_w\mid w\in\fS_r\}$ satisfying the  following:
\begin{equation}\label{TwTi}
T_wT_{s_i}=\left\{
\begin{array}{ll}
T_{ws_i},&\text{ if }\ell(ws_i)=\ell(w)+1,\\
(q-1)T_{w}+q T_{ws_i}, &\text{ if }\ell(ws_i)=\ell(w)-1.
\end{array}
\right.
\end{equation}

Recall that $\leq$ is the Bruhat order of $\mf S_r$.
Then by~\eqref{TwTi} one can deduce that, for any $y,w\in \fS_r$, there is an element $y*w\in\fS_r$ such that
$\ell(y*w)\leq\ell(y)+\ell(w)$ and
\begin{equation}\label{TsigmaTw}
T_yT_w=\sum_{z\leq y*w}f^{y,w}_{z}T_{z}
\end{equation}
for some $f^{y,w}_z\in R$ with $f^{y,w}_{y*w}\neq0$ (see, e.g., \cite[Proposition 4.30]{DDPW}).


\begin{lem} \label{TuTdTw}
Suppose $\la,\mu\in\La(n,r)$. Let $u\in\mf S_\la, d\in\mc D_{\la,\mu},$ and  $w\in\mf S_\mu$.
Then we have
$$
T_uT_dT_w=\sum_{y\leq ud*w}f^{ud,w}_{y} T_{y}\text{ for some }f^{ud,w}_{y}\in R.
$$
In particular, $\ell(y)\leq \ell(u)+\ell(d)+\ell(w)$ whenever $f^{ud,w}_{y}\neq 0$.
\end{lem}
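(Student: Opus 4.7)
The plan is to decompose the product $T_uT_dT_w$ into two multiplications and apply the tools already recorded in the excerpt.

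First I would exploit the defining property of $\mc D_\la$: since $\mc D_\la$ is the set of minimal length right coset representatives of $\mf S_\la$ in $\mf S_r$, every element of $\mf S_r$ factors uniquely as a product $u\cdot d$ with $u\in\mf S_\la$, $d\in\mc D_\la$, and $\ell(ud)=\ell(u)+\ell(d)$. In particular, for the given $u\in\mf S_\la$ and $d\in\mc D_{\la,\mu}\subseteq\mc D_\la$, iterating the first case of \eqref{TwTi} along a reduced expression of $u$ yields
\begin{equation*}
T_u T_d = T_{ud}.
\end{equation*}

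Next I would feed $T_{ud}T_w$ into the general expansion formula \eqref{TsigmaTw}: setting $y=ud$ produces
\begin{equation*}
T_uT_dT_w = T_{ud}T_w = \sum_{u'\le ud*w} f^{ud,w}_{u'}\,T_{u'},
\end{equation*}
which is exactly the asserted identity. The length estimate is then automatic: by definition of the element $ud*w$, one has $\ell(u')\le \ell(ud*w)\le \ell(ud)+\ell(w)$, and combining this with $\ell(ud)=\ell(u)+\ell(d)$ gives the bound $\ell(u')\le \ell(u)+\ell(d)+\ell(w)$ claimed in the statement.

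There is essentially no obstacle here; the substance of the lemma is that the factorisation $ud$ is length-additive, which is built into the definition of $\mc D_{\la,\mu}$ via Lemma \ref{DJ}(2) (and indeed is the reason double coset representatives are convenient in the first place). Once that is in hand, the proof is a two-line application of \eqref{TwTi} followed by \eqref{TsigmaTw}. The only point to be mildly careful about is that \eqref{TsigmaTw} is quoted for arbitrary $y,w\in\mf S_r$, so no further hypothesis on $w\in\mf S_\mu$ is actually used in the proof; the restriction $w\in\mf S_\mu$ is merely the form in which this lemma will later be applied.
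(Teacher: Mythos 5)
Your proposal is correct and follows essentially the same route as the paper's proof: establish $T_uT_d=T_{ud}$ from the length-additivity of the factorisation $ud$ (coming from $d$ being a distinguished coset representative) and then apply \eqref{TsigmaTw} together with the bound $\ell(u')\le\ell(ud*w)\le\ell(ud)+\ell(w)$. Your justification of $T_uT_d=T_{ud}$ via iterating the first case of \eqref{TwTi} is exactly the intended argument.
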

\begin{proof} Clearly from Lemma~\ref{DJ}(3) and \eqref{Twcal}, we have $T_uT_d=T_{ud}$.
Then the lemma follows from \eqref{TsigmaTw}.
\end{proof}
\begin{lem}\label{HC-basis2} For any given $\la,\mu\in\La(n,r)$, the set
$$\mathscr B'=\{T_uT_dc^\al T_\sigma\mid u\in\mf S_\la,d\in\mc D_{\la,\mu},\sigma\in\mc D_{\nu(d)}\cap\mf S_\mu, \al\in\Z_2^r\}$$ forms an $R$-basis for $\HCR$.
\end{lem}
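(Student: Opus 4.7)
The plan is to expand each element of $\mathscr B'$ in the standard basis $\mathscr B = \{T_wc^\al\}$ of Lemma~\ref{stdbs} and show that, under a suitable ordering, the transition matrix is upper unitriangular (up to signs $\pm1$), whence $\mathscr B'$ is itself a basis. First I would verify that $\mathscr B'$ has the right cardinality: by Lemma~\ref{DJ}(4) the map $(u,d,\sigma)\mapsto ud\sigma$ gives a bijection
\[
\{(u,d,\sigma)\mid u\in\mf S_\la,\ d\in\mc D_{\la,\mu},\ \sigma\in\mc D_{\nu(d)}\cap\mf S_\mu\}\;\longleftrightarrow\;\mf S_r,
\]
so $|\mathscr B'|=|\mf S_r|\cdot 2^r$, matching the rank of $\HCR$ over $R$.

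Next I would compute the leading term. Since $u\in\mf S_\la$ and $d\in\mc D_\la$, length additivity gives $T_uT_d=T_{ud}$. Applying~\eqref{HC-PBW} to $c^\al T_\sigma$ yields
\[
c^\al T_\sigma=\pm T_\sigma c^{\al\cdot\sigma}+\sum_{\pi<\sigma,\,\beta\in\Z_2^r}a^{\al,\sigma}_{\pi,\beta}T_\pi c^\beta.
\]
Multiplying on the left by $T_{ud}$ and using Lemma~\ref{DJ}(4) (which gives $\ell(ud\sigma)=\ell(u)+\ell(d)+\ell(\sigma)$ and hence $T_{ud}T_\sigma=T_{ud\sigma}$) together with Lemma~\ref{TuTdTw}/\eqref{TsigmaTw} to expand the products $T_{ud}T_\pi$ for $\pi<\sigma$, I obtain
\[
T_uT_dc^\al T_\sigma=\pm T_{ud\sigma}\,c^{\al\cdot\sigma}+\sum_{\substack{w\in\mf S_r,\,\gamma\in\Z_2^r\\ \ell(w)<\ell(ud\sigma)}}b^{u,d,\sigma,\al}_{w,\gamma}\,T_wc^\gamma,
\]
because every term from the lower sum contributes $T_z$ with $\ell(z)\le\ell(u)+\ell(d)+\ell(\pi)<\ell(ud\sigma)$.

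Finally, the leading-term assignment $(u,d,\sigma,\al)\mapsto(ud\sigma,\al\cdot\sigma)$ is a bijection from the index set of $\mathscr B'$ to $\mf S_r\times\Z_2^r$: the first component is the bijection from Lemma~\ref{DJ}(4), and for each fixed $\sigma$ the map $\al\mapsto\al\cdot\sigma$ is a bijection on $\Z_2^r$. Ordering both bases by non-increasing length of the $\mf S_r$-component (with an arbitrary refinement), the transition matrix from $\mathscr B'$ to $\mathscr B$ is block upper-triangular with $\pm 1$'s on the diagonal; in particular it is invertible over $R$. Since $\mathscr B$ is an $R$-basis of $\HCR$, so is $\mathscr B'$.

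The one point that requires care — and which I view as the main obstacle — is keeping the length bookkeeping sharp enough: one must use that the sum in~\eqref{HC-PBW} is strictly over $\pi<\sigma$ (so $\ell(\pi)<\ell(\sigma)$) and that the Bruhat-order expansion~\eqref{TsigmaTw} of $T_{ud}T_\pi$ only produces $T_z$ with $\ell(z)\le\ell(ud)+\ell(\pi)$, in order to guarantee that every ``error'' term has strictly smaller $\mf S_r$-length than the leading term $T_{ud\sigma}c^{\al\cdot\sigma}$. Once this is in place, the unitriangularity argument closes the proof.
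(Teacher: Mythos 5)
Your argument is correct and follows essentially the paper's own proof: the same expansion of $c^\al T_\sigma$ via \eqref{HC-PBW}, the length additivity of $w=ud\sigma$ from Lemma~\ref{DJ}(4), and the Bruhat-order bound \eqref{TsigmaTw} yield exactly the triangular transition matrix between $\mathscr B'$ and the standard basis $\mathscr B$ that the paper uses. The only cosmetic difference is that the paper proves the spanning property by induction on $\ell(w)$ (rewriting each $T_wc^\al$ in terms of $\mathscr B'$ plus shorter terms) and then invokes triangularity for independence, whereas you get both at once from the cardinality count and the invertibility of the (signed) unitriangular matrix via the leading-term bijection $(u,d,\sigma,\al)\mapsto(ud\sigma,\al\cdot\sigma)$.
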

\begin{proof}Let  $M$ be the $R$-submodule of $\HCR$ spanned by the set $\mathscr B'$.
Take an arbitrary $w\in\mf S_r$ and $\al\in\Z_2^r$.
We now prove every $T_wc^\al\in M$ by induction on $\ell(w)$.
Clearly, if $\ell(w)=0$, then $T_w=1$ and $c^\al\in M$.
Assume now $\ell(w)\geq 1$. By Lemma~\ref{DJ}(1) and (4), there exist unique $d\in\mc D_{\la,\mu}$ and $u\in\mf S_\la,\sigma\in\mc D_{\nu(d)}\cap\mf S_\mu$
such that $w=ud\sigma$ and $\ell(w)=\ell(u)+\ell(d)+\ell(\sigma)$. Then by \eqref{TwTi} we obtain $T_w=T_uT_dT_\sigma$.
Hence, by \eqref{HC-PBW} and Lemma~\ref{TuTdTw}, we have
\begin{equation}\label{Twcal}
\aligned
T_wc^\al=&~ T_uT_d(T_\sigma c^\al)=\pm T_uT_d \bigg( c^{\al\cdot\sigma^{-1}}T_{\sigma}-\sum_{\pi<\sigma,\beta\in\Z_2^r}a^{\al\cdot\sigma^{-1},\sigma}_{\beta,\pi}T_\pi c^\beta\bigg)\\
=&\pm T_uT_dc^{\al\cdot\sigma^{-1}}T_{\sigma}\mp \sum_{\pi<\sigma,\beta\in\Z_2^r}a^{\al\cdot\sigma^{-1},\sigma}_{\beta,\pi}T_uT_dT_\pi c^\beta\\
=&~ \pm T_uT_dc^{\al\cdot\sigma^{-1}}T_{\sigma}\mp\sum_{\pi<\sigma,\beta\in\Z_2^r}a^{\al\cdot\sigma^{-1},\sigma}_{\beta,\pi}\sum_{u'\leq ud*\pi, \beta\in\Z_2^r}f^{ud,\pi}_{u',\beta}T_{u'} c^\beta,
\endaligned
\end{equation}
where $u'$ satisfying $\ell(u')\leq\ell(u)+\ell(d)+\ell(\pi)$.
Since $\pi<\sigma$, we have $\ell(u')\leq\ell(u)+\ell(d)+\ell(\pi)<\ell(u)+\ell(d)+\ell(\sigma)=\ell(w)$.
Then, by induction on $\ell(w)$, each term $T_{u'} c^\beta$ on the right hand side of \eqref{Twcal} belongs to $M$ and so does $T_wc^{\al}$ by \eqref{Twcal}.
Hence, $M=\HCR$, or equivalently, $\HCR$ is spanned by the set $\mathscr B'$ by Lemma~\ref{stdbs}. Finally,
$|\mathscr B|=|\mathscr B'|$ forces  that the set is linearly independent.
\end{proof}
For $\la,\mu\in\La(n,r)$, let
$X_{\la,\mu}=\big( \bigcup_{d\in\mc D_{\la,\mu}}\big(\{d\}\times (\mc D_{\nu(d)}\cap\fS_\mu)\big)\big)\times \Z_2^r.$
\begin{cor}\label{trivial-intersect}
For given $\la,\mu\in\La(n,r)$, we may decompose $\HCR$ into  a direct sum of left $\mc H_{\la,R}$-modules or right
$\sH_{\mu,R}^{\mathsf c}$-modules:
$$\HCR=\bigoplus_{(d,\sigma,\al)\in X_{\la,\mu}}\mc H_{\la,R}T_dc^\al T_\sigma=\bigoplus_{d\in\mc D_{\la,\mu}}\bigoplus_{u\in\mc D^{-1}_{\nu(d^{-1})}\cap\fS_\la}T_uT_d\sH_{\mu,R}^{\mathsf c}.$$
In particular, the set
$$\{x_\la T_dc^\al T_\sigma\mid(d,\sigma,\al)\in X_{\la,\mu}\} \quad (\text{resp., }  \{T_uT_dc^\alpha x_\mu\mid d\in\mc D_{\la,\mu},u\in\mc D^{-1}_{\nu(d^{-1})}\cap\fS_\la,\al\in\mathbb Z_2^r\}) $$
forms a basis for $x_\la\HCR$ (resp., $\HCR x_\mu$).
\end{cor}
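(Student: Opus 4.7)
The plan is to derive both decompositions directly from the two PBW--type bases of $\HCR$: the basis $\mathscr B'$ supplied by Lemma~\ref{HC-basis2}, and a ``mirror'' basis obtained by combining the standard basis $\mathscr B$ of Lemma~\ref{stdbs} with the unique coset factorization in Corollary~\ref{DJ-cor}(3). Once these two bases are in hand, each asserted direct sum decomposition is obtained by regrouping basis elements into blocks.

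For the first decomposition, Lemma~\ref{HC-basis2} says that $\mathscr B'=\{T_uT_dc^\al T_\sigma\mid u\in\fS_\la,\,(d,\sigma,\al)\in X_{\la,\mu}\}$ is an $R$-basis of $\HCR$. As $\{T_u:u\in\fS_\la\}$ is a basis of $\mc H_{\la,R}$, partitioning $\mathscr B'$ by the triple $(d,\sigma,\al)\in X_{\la,\mu}$ shows that the $R$-span of the $u$-block is exactly $\mc H_{\la,R}T_dc^\al T_\sigma$ and that these spans have zero pairwise intersection. This gives the first equality.

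For the second decomposition, I invoke Corollary~\ref{DJ-cor}(3): every $w\in\fS_r$ factors uniquely as $w=u'd\pi$ with $d\in\mc D_{\la,\mu}$, $u'\in\mc D^{-1}_{\nu(d^{-1})}\cap\fS_\la$, and $\pi\in\fS_\mu$, and by part~(1) of the same corollary the lengths add: $\ell(w)=\ell(u')+\ell(d)+\ell(\pi)$. Hence $T_w=T_{u'}T_dT_\pi$. Combined with the standard basis $\{T_wc^\beta:w\in\fS_r,\beta\in\Z_2^r\}$ of Lemma~\ref{stdbs}, this shows
\[
\mathscr B''=\{T_{u'}T_dT_\pi c^\beta\mid d\in\mc D_{\la,\mu},\;u'\in\mc D^{-1}_{\nu(d^{-1})}\cap\fS_\la,\;\pi\in\fS_\mu,\;\beta\in\Z_2^r\}
\]
is an $R$-basis of $\HCR$. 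Applying Lemma~\ref{stdbs} to $\fS_\mu$ (i.e.\ the isomorphism $\mc H_{\mu,R}^{\mathsf c}\cong\mc H_{\mu,R}\otimes\mc C_r$ noted after~\eqref{Parabolic-subalgebra}) shows $\{T_\pi c^\beta:\pi\in\fS_\mu,\beta\in\Z_2^r\}$ is a basis of $\sH_{\mu,R}^{\mathsf c}$. Thus the $(u',d)$-block of $\mathscr B''$ spans precisely $T_{u'}T_d\sH_{\mu,R}^{\mathsf c}$, giving the second direct sum.

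For the final claim on $x_\la\HCR$, I use Lemma~\ref{ylaHC}(2), namely $x_\la\HCR=\{h\in\HCR\mid T_{s_k}h=qh,\,\forall\, s_k\in\fS_\la\}$. Write $h\in\HCR$ uniquely via the first decomposition as $h=\sum_{(d,\sigma,\al)\in X_{\la,\mu}}h_{d,\sigma,\al}T_dc^\al T_\sigma$ with $h_{d,\sigma,\al}\in\mc H_{\la,R}$. Since left multiplication by $T_{s_k}$ ($s_k\in\fS_\la$) stabilises each summand, the condition $T_{s_k}h=qh$ forces $T_{s_k}h_{d,\sigma,\al}=qh_{d,\sigma,\al}$ for every $(d,\sigma,\al)$; the classical fact $x_\la\mc H_{\la,R}=Rx_\la$ then gives $h_{d,\sigma,\al}\in Rx_\la$, so the listed elements span $x_\la\HCR$, and the directness of the first decomposition promotes them to a basis. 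The main point requiring care is the length-additivity in the factorization $ud=u'd\tau$ of Corollary~\ref{DJ-cor}(1), which is what allows $T_uT_d$ to equal $T_{u'}T_dT_\tau$ without lower-order corrections and thus lets us pass cleanly between $\mathscr B'$ and $\mathscr B''$; everything else is bookkeeping.
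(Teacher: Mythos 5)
Your proof is correct and follows essentially the same route as the paper: regroup the basis $\mathscr B'$ of Lemma~\ref{HC-basis2} for the first decomposition, use the unique factorization from Corollary~\ref{DJ-cor} to get the second, and then read off the basis of $x_\la\HCR$. The only small imprecision is attributing the length additivity $\ell(u'd\pi)=\ell(u')+\ell(d)+\ell(\pi)$ for arbitrary $\pi\in\fS_\mu$ to Corollary~\ref{DJ-cor}(1) alone; it actually follows by combining Lemma~\ref{DJ}(4) with Corollary~\ref{DJ-cor}(1) and the additivity $\ell(\tau\sigma)=\ell(\tau)+\ell(\sigma)$ for $\tau\in\fS_{\nu(d)}$ and $\sigma\in\mc D_{\nu(d)}\cap\fS_\mu$.
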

\begin{proof}
Clearly, $\mc H_{\la,R}T_dc^{\al}T_\sigma$ is spanned by $\{T_uT_dc^{\al}T_\sigma\mid u\in\mf S_\la\}$.
Since
$$
\{T_uT_dc^\al T_\sigma\mid u\in\mf S_\la,(d,\sigma,\al)\in X_{\la,\mu}\}=\displaystyle\dot\bigcup_{(d,\sigma,\al)\in X_{\la,\mu}}
\{T_uT_dc^\al T_\sigma\mid u\in\mf S_\la\}
$$
is a disjoint union, the assertion follows from Lemma~\ref{HC-basis2} and the Corollary~\ref{DJ-cor}.
\end{proof}

\section{The elements $c_{q,i,j}$}\label{element}

For $r\geq 1$ and $1\leq i<j\leq r$, we set
\begin{equation}\label{cijq}
c_{q,i,j}=q^{j-i}c_i+q^{j-i-1}c_{i+1}+\cdots+qc_{j-1}+c_j,\quad
c'_{q,i,j}=c_i+q c_{i+1}+\cdots+q^{j-i}c_j
\end{equation}
For simplicity, we write $c_{q,r}=c_{q,1,r}$ and $c_{q,r}'=c_{q,1,r}'$.

\begin{lem}\label{yr-crq}
Let $r\geq 1$. The following holds in $\HCR$:
$$
x_{(r)}c_{q,r}=c'_{q,r}x_{(r)}\quad\text{and}\quad y_{(r)}c'_{q,r}=c_{q,r}y_{(r)}.
$$
\end{lem}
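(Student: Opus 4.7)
The plan is to prove the first identity, $x_{(r)}c_{q,r}=c'_{q,r}x_{(r)}$, by induction on $r$, and to deduce the second either from it via the anti-involution $\tau$ of Lemma~\ref{anti-inv}(3) or by a parallel induction. The base case $r=1$ is trivial. For $r=2$, a short direct computation using $T_1c_1=c_2T_1$ and $T_1c_2=c_1T_1-(q-1)(c_1-c_2)$ yields
\[
(1+T_1)(qc_1+c_2)=qc_1+c_2+qc_2T_1+c_1T_1-(q-1)(c_1-c_2)=(c_1+qc_2)(1+T_1),
\]
settling the initial nontrivial case.

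For the inductive step ($r\geq 3$), I would write $x_{(r)}=x_{(r-1)}\widetilde X$, where $\widetilde X=1+T_{r-1}+T_{r-1}T_{r-2}+\cdots+T_{r-1}T_{r-2}\cdots T_1$ is the sum of $T_d$ over the minimal length right coset representatives of $\mf S_{r-1}$ in $\mf S_r$. Splitting $c_{q,r}=q\,c_{q,1,r-1}+c_r$ and $c'_{q,r}=c'_{q,1,r-1}+q^{r-1}c_r$ (as in \eqref{cijq}), and setting $\widehat c:=c_{q,1,r-1}+q^{r-1}c_r$, I would combine the induction hypothesis $x_{(r-1)}c_{q,1,r-1}=c'_{q,1,r-1}x_{(r-1)}$ (valid inside the subalgebra $\mc H^{\mathsf c}_{(r-1),R}$) with the fact that $c_r$ commutes with $x_{(r-1)}$ (which only involves $T_1,\ldots,T_{r-2}$) to obtain $x_{(r-1)}\widehat c=c'_{q,r}x_{(r-1)}$, and hence $c'_{q,r}x_{(r)}=x_{(r-1)}\widehat c\,\widetilde X$. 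The target identity therefore reduces to
\[
x_{(r-1)}\bigl[\widetilde X\,c_{q,r}-\widehat c\,\widetilde X\bigr]=0.
\]

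To establish this, I would expand $\widetilde X\,c_{q,r}$ using the Hecke--Clifford relations \eqref{commute-Tc}. Every use of $T_ic_{i+1}=c_iT_i-(q-1)(c_i-c_{i+1})$ generates a $(q-1)$-correction, and after careful bookkeeping the bracket will collapse to a sum of terms of the shape $(q-1)(c_a-c_b)T_w$ with $T_w$ of length less than $r-1$; left multiplication by $x_{(r-1)}$ then annihilates this sum via the absorption $x_{(r-1)}T_i=qx_{(r-1)}$ for $1\leq i\leq r-2$, combined with short direct computations of $x_{(r-1)}(c_a-c_b)$-type expressions. For instance, in the case $r=3$ the bracket simplifies to $(q-1)\bigl[q(c_1-c_3)+(c_3-c_2)T_1\bigr]$, and $x_{(2)}=1+T_1$ is readily seen to annihilate it. The main obstacle will be controlling this cancellation in full generality: it rests on a delicate interplay between the geometric weights $q^{r-j}$ appearing in $c_{q,r}$ and the length statistics of the summands of $\widetilde X$.

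The second identity $y_{(r)}c'_{q,r}=c_{q,r}y_{(r)}$ will be handled by repeating the argument verbatim for $y_{(r)}$, replacing $x_{(r)}T_i=qx_{(r)}$ by $y_{(r)}T_i=-y_{(r)}$ throughout. Alternatively, it follows from the first identity by applying the anti-involution $\tau$ of Lemma~\ref{anti-inv}(3): since $\tau(c_j)=c_j$ and $\tau$ sends $x_\la$ to a nonzero scalar multiple of $y_\la$ (compatible with \eqref{iota-twist}), the equation $\tau(x_{(r)}c_{q,r})=\tau(c'_{q,r}x_{(r)})$ rearranges to $c_{q,r}y_{(r)}=y_{(r)}c'_{q,r}$ after cancelling the scalar.
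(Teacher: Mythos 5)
Your strategy is the same as the paper's: induction on $r$, the decomposition $x_{(r)}=x_{(r-1)}\widetilde X$ with $\widetilde X=1+T_{r-1}+T_{r-1}T_{r-2}+\cdots+T_{r-1}\cdots T_1$ (the paper's \eqref{yrinduction}), and the reduction, via the induction hypothesis and the fact that $c_r$ commutes with $x_{(r-1)}$, to the single identity $x_{(r-1)}\bigl[\widetilde X\,c_{q,r}-\widehat c\,\widetilde X\bigr]=0$ with $\widehat c=c_{q,1,r-1}+q^{r-1}c_r$; this is precisely the paper's \eqref{yr-crq2}. The genuine gap is that this identity --- which is the entire technical content of the lemma --- is asserted rather than proved: you verify it for $r=2,3$ and then appeal to ``careful bookkeeping'', yourself flagging the general cancellation as the main obstacle. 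Moreover, the mechanism you describe is not sufficient as stated: $x_{(r-1)}$ does not annihilate the individual terms $(q-1)(c_a-c_b)T_w$, and your own $r=3$ computation shows that $x_{(2)}$ kills $q(c_1-c_3)+(c_3-c_2)T_1$ only after pushing Clifford generators past $T_1$ and cancelling the two summands against each other. So the vanishing is a nontrivial identity tying the weights $q^{r-j}$ in $c_{q,r}$ to the lengths of the tails $T_{r-1}\cdots T_k$, not a formal consequence of the absorption $x_{(r-1)}T_i=qx_{(r-1)}$.

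What the paper supplies, and what your proposal must supply to be complete, is exactly this computation: the closed formula for $T_{r-1}\cdots T_k\,c_l$ in three cases (the third case producing the correction $-(q-1)T_{l-2}\cdots T_k(c_k-c_r)T_{r-1}\cdots T_l$), the absorption $x_{(r-1)}T_{l-2}\cdots T_k=q^{l-k-1}x_{(r-1)}$ which turns these corrections into $(q-1)q^{r-k-1}x_{(r-1)}(c_k-c_r)T_{r-1}\cdots T_l$, and then an interchange of the order of summation in the resulting double sum over $k<l$ so that, for each fixed tail $T_{r-1}\cdots T_l$, the Clifford coefficients recombine to exactly $c_{q,1,r-1}+q^{r-1}c_r$; see \eqref{yr-crq1}--\eqref{yr-crq2}. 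Separately, your shortcut for the second identity via $\tau$ needs care: \eqref{iota-twist} only gives equality of right ideals $\tau(y_\la)\HCR=x_\la\HCR$, whereas your argument needs $\tau(x_{(r)})$ to be a unit multiple of $y_{(r)}$ (true, e.g.\ $\tau(x_{(2)})=q\,y_{(2)}$, but it requires a short additional argument); the safer route, and the paper's, is to rerun the induction with $y_{(r)}T_i=-y_{(r)}$ in place of $x_{(r)}T_i=qx_{(r)}$.
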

\begin{proof}We only prove the first formula. The proof of the second one is similar.
We apply induction on $r$.
It is known that
\begin{equation}\label{yrinduction}
x_{(r)}
=x_{(r-1)}\big(1+T_{r-1}+T_{r-1}T_{r-2}+\cdots+T_{r-1}\cdots T_1\big)
\end{equation}
since $\{1,s_{r-1},s_{r-1}s_{r-2},\ldots, s_{r-1}\cdots s_2s_1\}=\mc D_{(r-1,1)}$.
Using~\eqref{commute-Tc},
a direct calculation shows for $1\leq k\leq r-1$ and $1\leq l\leq r$:
\begin{align*}
T_{r-1}\cdots T_kc_l=\left\{
\begin{array}{ll}
c_lT_{r-1}\cdots T_k,&\text{ if }1\leq l\leq k-1,\\
c_rT_{r-1}\cdots T_k, &\text{ if }l=k,\\
c_{l-1}T_{r-1}\cdots T_k-(q-1)T_{l-2}\cdots T_k(c_k-c_r)T_{r-1}\cdots T_l,&\text{ if }k+1\leq l\leq r,
\end{array}
\right.
\end{align*}
where we use the convention $T_{r-1}\cdots T_l=1$ if $l=r$ and the formula in the last case is due to
the following computation:
\begin{align*}
T_{r-1}\cdots T_kc_l=&T_{r-1}\cdots T_l\cdot (c_{l-1}T_{l-1}-(q-1)(c_{l-1}-c_l))\cdot T_{l-2}\cdots T_k\\
=&c_{l-1}T_{r-1}\cdots T_lT_{l-1}T_{l-2}\cdots T_k-(q-1)(T_{r-1}\cdots T_l)\cdot(c_{l-1}-c_l)\cdot (T_{l-2}\cdots T_k)\\
=&c_{l-1}T_{r-1}\cdots T_k-(q-1)(c_{l-1}-c_r)\cdot (T_{r-1}\cdots T_l)\cdot (T_{l-2}\cdots T_k)\\
=&c_{l-1}T_{r-1}\cdots T_k-(q-1)(c_{l-1}-c_r)\cdot (T_{l-2}\cdots T_k)\cdot (T_{r-1}\cdots T_l)\\
=&c_{l-1}T_{r-1}\cdots T_k-(q-1)(T_{l-2}\cdots \cdot T_k)\cdot(c_{k}-c_r)\cdot  (T_{r-1}\cdots T_l).
\end{align*}
This implies that the following holds for $1\leq k\leq r-1$ and $1\leq l\leq r$:
\begin{align*}
&x_{(r-1)}T_{r-1}\cdots T_kc_l\\
&=\left\{
\begin{array}{ll}
x_{(r-1)}c_lT_{r-1}\cdots T_k,&\text{ if }1\leq l\leq k-1,\\
x_{(r-1)}c_rT_{r-1}\cdots T_k, &\text{ if }l=k,\\
x_{(r-1)}c_{l-1}T_{r-1}\cdots T_k-q^{l-k-1}(q-1)x_{(r-1)}(c_k-c_r)T_{r-1}\cdots T_l,&\text{ if }k+1\leq l\leq r,
\end{array}
\right.
\end{align*}
where the formula in the last case is due to the fact $x_{(r-1)}T_{l-2}\cdots T_k=q^{l-k-1}x_{(r-1)}$.
Hence we obtain
\begin{align*}
&x_{(r-1)}T_{r-1}\cdots T_kc_{q,r}=x_{(r-1)}T_{r-1}\cdots T_k(q^{r-k+1}c_{q,1,k-1}+q^{r-k}c_k+c_{q,k+1,r})\\
&=x_{(r-1)}(q^{r-k+1}c_{q,1,k-1}+q^{r-k}c_r+c_{q,k,r-1})T_{r-1}\cdots T_k\\
&\quad -\sum^{r}_{l=k+1}q^{r-l}q^{l-k-1}(q-1)x_{(r-1)}(c_k-c_r)T_{r-1}\cdots T_l\;\;(\text{noting $T_{r-1}\cdots T_l=1$ if $l=r$})\\
&=x_{(r-1)}(q^{r-k+1}c_{q,1,k-1}+q^{r-k}c_r+c_{q,k,r-1})T_{r-1}\cdots T_k\\
&\quad -\sum^{r-1}_{l=k+1}(q-1)q^{r-k-1}x_{(r-1)}(c_k-c_r)T_{r-1}\cdots T_l-(q-1)q^{r-k-1}x_{(r-1)}(c_k-c_r).
\end{align*}
This together with~\eqref{yrinduction} gives rise to
\begin{equation}\label{yr-crq1}
\aligned
x_{(r)}c_{q,r}
&=\big(x_{(r-1)}+\sum_{k=1}^{r-1}x_{(r-1)}T_{r-1}\cdots T_k\big)c_{q,r}\\
&=x_{(r-1)}c_{q,r}+\sum^{r-1}_{k=1}x_{(r-1)}(q^{r-k+1}c_{q,1,k-1}+q^{r-k}c_r+c_{q,k,r-1})T_{r-1}\cdots T_k\\
&\quad\,-\sum^{r-1}_{k=1}\sum^{r-1}_{l=k+1}(q-1)q^{r-k-1}x_{(r-1)}(c_k-c_r)T_{r-1}\cdots T_l\\
&\quad\,-\sum^{r-1}_{k=1}(q-1)q^{r-k-1}x_{(r-1)}(c_k-c_r).
\endaligned
\end{equation}
Combining the first term and the last sum in \eqref{yr-crq1} yields
\begin{align*}
x_{(r-1)}c_{q,r}&-\sum^{r-1}_{k=1}x_{(r-1)}(q-1)q^{r-k-1}(c_k-c_r)\\
&=x_{(r-1)}\bigg(\sum^r_{k=1}q^{r-k}c_k-\sum^{r-1}_{k=1}(q^{r-k}-q^{r-k-1})c_k+\sum^{r-1}_{k=1}(q^{r-k}-q^{r-k-1})c_r\bigg)\\
&=x_{(r-1)}(q^{r-2}c_1+q^{r-3}c_2+\cdots+c_{r-1}+q^{r-1}c_r)
=x_{(r-1)}(c_{q,1,r-1}+q^{r-1}c_r).
\end{align*}
Also, the second (double) sum in \eqref{yr-crq1} can be rewritten as
\begin{align*}
&\quad\,x_{(r-1)}\sum^{r-1}_{l=2}\bigg(\sum^{l-1}_{k=1}(q-1)q^{r-k-1}(c_k-c_r)\bigg)T_{r-1}\cdots T_l\\
&=x_{(r-1)}\sum^{r-1}_{l=2}\big((q^{r-l+1}-q^{r-l})c_{q,1,l-1}-(q^{r-1}-q^{r-l})c_r\big)T_{r-1}\cdots T_l.
\end{align*}
Hence, substituting into \eqref{yr-crq1} yields
\begin{equation}\label{yr-crq2}
x_{(r)}c_{q,r}=x_{(r-1)}(c_{q,1,r-1}+q^{r-1}c_r)(1+\sum^{r-1}_{k=1}T_{r-1}\cdots T_k).
\end{equation}
Here the following calculation has been used for $2\leq k\leq r-1$:
\begin{align*}
&(q^{r-k+1}c_{q,1,k-1}+q^{r-k}c_r+c_{q,k,r-1})-\big((q^{r-k+1}-q^{r-k})c_{q,1,k-1}-(q^{r-1}-q^{r-k})c_r\big)\\
&=q^{r-k}c_{q,1,k-1}+c_{q,k,r-1}+q^{r-1}c^r=c_{q,1,r-1}+q^{r-1}c_r=c_{q,r-1}+q^{r-1}c_r,
\end{align*}
which is also the first ($k=1$) term of the first sum in \eqref{yr-crq1}.
By induction on $r$, we have
$$
x_{(r-1)}c_{q,1,r-1}=c'_{q,r-1}x_{(r-1)}
$$
and hence, 
one can deduce
\begin{align*}
x_{(r-1)}(c_{q,1,r-1}+q^{r-1}c_r)=(c'_{q,r-1}+q^{r-1}c_r)x_{(r-1)}=c'_{q,r}x_{(r-1)}.
\end{align*}
Thus, by~\eqref{yr-crq2} and \eqref{yrinduction}, we obtain
$$
x_{(r)}c_{q,r}=c'_{q,r}x_{(r-1)}(1+T_{r-1}+\cdots+T_{r-1}\cdots T_1)=c'_{q,r}x_{(r)}.
$$
This proves the lemma.
\end{proof}
%
%
\begin{prop}\label{one-part case}
For $r\geq 1$,  the intersection $x_{(r)}\HCR\cap\HCR x_{(r)}$ is a free $R$-module with basis $\{x_{(r)}, x_{(r)}c_{q,r}\}$. A similar result holds with $x_{(r)}$ replaced by $y_{(r)}$ and $c_{q,r}$ by $c_{q,r}'$.
\end{prop}
\begin{proof}
By Lemma \ref{yr-crq}, the set $\{x_{(r)}, x_{(r)}c_{q,r}\}$ is contained in the intersection
$x_{(r)}\HCR\cap\HCR x_{(r)}$ and clearly it is linearly independent.
It remains to show that $x_{(r)}\HCR\cap\HCR x_{(r)}$ is spanned by $\{x_{(r)}, x_{(r)}c_{q,r}\}$.
Take an arbitrary $z\in x_{(r)}\HCR\cap\HCR x_{(r)}$. Since $z\in x_{(r)}\HCR$, by Lemma~\ref{ylaHC}(1), we can write $z$ as
$
z=\sum_{\al\in\Z_2^r}f_\al x_{(r)}c^\al,
$
with $f_\al\in R$. Moreover, since $z\in \HCR x_{(r)}$, by Lemma~\ref{ylaHC}(2),
$zT_{s_k}=qz, \quad \forall 1\leq k\leq r-1.$

For $\al=(\al_1,\ldots,\al_r)$ and $1\leq k\leq r-1$, by \eqref{commute-Tc} and \eqref{ccT}, we have
\begin{align*}
c^{\al_{j}}_{j}T_{s_k}&=T_{s_k}c^{\al_{j}}_{j},\quad \text{ for } j\neq k,k+1,\\
c^{\al_{k+1}}_{k+1}T_{s_k}&=T_{s_k}c^{\al_{k+1}}_{k},\quad
c^{\al_{k}}_{k}T_{s_k}=T_{s_k}c^{\al_{k}}_{k+1}+(q-1)(c_k^{\al_k}-c^{\al_k}_{k+1}),\\
c^{\al_{k}}_{k}c^{\al_{k+1}}_{k+1}T_{s_k}&=T_{s_k}c^{\al_{k}}_{k+1}c^{\al_{k+1}}_{k}+(q-1)(c_k^{\al_k+\al_{k+1}}-c^{\al_k}_{k+1}c^{\al_{k+1}}_{k}).
\end{align*}
Then, by the fact $x_{(r)}T_{s_k}=qx_{(r)}$ for $1\leq k\leq r-1$, we obtain
\begin{equation}
\aligned
x_{(r)}c^{\al}~T_{s_k}=&x_{(r)}T_{s_k}c_1^{\al_1}\cdots c^{\al_{k-1}}_{k-1}c^{\al_{k}}_{k+1}c^{\al_{k+1}}_{k}c^{\al_{k+2}}_{k+2}\cdots c_r^{\al_r}\\
&+(q-1)x_{(r)}c_1^{\al_1}\cdots c^{\al_{k-1}}_{k-1}(c_k^{\al_k+\al_{k+1}}-c^{\al_k}_{k+1}c^{\al_{k+1}}_{k})c^{\al_{k+2}}_{k+2}\cdots c_r^{\al_r}\\
=&x_{(r)}c_1^{\al_1}\cdots c^{\al_{k-1}}_{k-1}c^{\al_{k}}_{k+1}c^{\al_{k+1}}_{k}c^{\al_{k+2}}_{k+2}\cdots c_r^{\al_r}\\
&+(q-1)x_{(r)}c_1^{\al_1}\cdots c^{\al_{k-1}}_{k-1}c_k^{\al_k+\al_{k+1}}c^{\al_{k+2}}_{k+2}\cdots c_r^{\al_r}.
\endaligned
\end{equation}
This together with $zT_{s_k}=qz$ implies
\begin{align*}
q\sum_{\al\in\Z_2^r}f_\al x_{(r)}c^\al=
zT_{s_k}=&\sum_{\al\in\Z_2^r}f_\al x_{(r)}c_1^{\al_1}\cdots c^{\al_{k-1}}_{k-1}c^{\al_{k}}_{k+1}c^{\al_{k+1}}_{k}c^{\al_{k+2}}_{k+2}\cdots c_r^{\al_r}\\
&+\sum_{\al\in\Z_2^r}(q-1)f_\al x_{(r)}c_1^{\al_1}\cdots c^{\al_{k-1}}_{k-1}c_k^{\al_k+\al_{k+1}}c^{\al_{k+2}}_{k+2}\cdots c_r^{\al_r}\\
\end{align*}
for all $1\leq k\leq r-1$. Then by using Lemma~\ref{ylaHC}(1) and by equating the coefficients of $x_{(r)}c^{\al}$
on both sides for each $\al\in\Z_2^r$, we obtain
\begin{equation}\label{423}
\aligned
qf_{(\al_1,\ldots,\al_{k-1},1,1,\al_{k+2},\ldots,\al_r)}
=&-f_{(\al_1,\ldots,\al_{k-1},1,1,\al_{k+2},\ldots,\al_r)},\\
qf_{(\al_1,\ldots,\al_{k-1},0,1,\al_{k+2},\ldots,\al_r)}
=&f_{(\al_1,\ldots,\al_{k-1},1,0,\al_{k+2},\ldots,\al_r)},\\
qf_{(\al_1,\ldots,\al_{k-1},0,0,\al_{k+2},\al_r)}
=&f_{(\al_1,\ldots,\al_{k-1},0,0,\al_{k+2},\ldots,\al_r)}\\
&+(q-1)(f_{(\al_1,\ldots,\al_{k-1},1,1,\al_{k+2},\ldots,\al_r)}+f_{(\al_1,\ldots,\al_{k-1},0,0,\al_{k+2},\ldots,\al_r)}),\\
qf_{(\al_1,\ldots,\al_{k-1},1,0,\al_{k+2},\ldots,\al_r)}
=&f_{(\al_1,\ldots,\al_{k-1},0,1,\al_{k+2},\ldots,\al_r)}\\
&+(q-1)(f_{(\al_1,\ldots,\al_{k-1},1,0,\al_{k+2},\ldots,\al_r)}
+f_{(\al_1,\ldots,\al_{k-1},0,1,\al_{k+2},\ldots,\al_r)}),
\endaligned
\end{equation}
where the relation \eqref{Cl} is used in the first and last equalities.
Hence, since the characteristic of $R$ is not equal to 2, \eqref{423} implies
\begin{align}\label{xxx}
f_{(\al_1,\ldots,\al_{k-1},1,0,\al_{k+1},\ldots,\al_r)}
&=qf_{(\al_1,\ldots,\al_{k-1},0,1,\al_{k+1},\ldots,\al_r)},\quad f_{(\al_1,\ldots,\al_{k-1},1,1,\al_{k+1},\ldots,\al_r)}=0
\end{align}
for all $\al_i\in\Z_2$ and $1\leq k\leq r-1$.
Then one can deduce that
\begin{align*}
\left\{
\begin{array}{ll}
f_{\ep_k}=q^{r-k}f_{\ep_r},&\text{ for }1\leq k\leq r,\\
f_{\al}=0,&\text{ for } \al\in{\mathbb Z_2^r}\backslash {\bf 0}\text{ and }\al\neq\ep_k, 1\leq k\leq r,
\end{array}
\right.
\end{align*}
where $\ep_k=(0,\ldots,0,1,0,\ldots,0)$ with $1$ in the $k$-th position.
This means
$$
z=\sum_{\al\in\Z_2^r}f_\al x_{(r)}c^\al=f_{(0,\ldots,0)}x_{(r)}+f_{\ep_r}x_{(r)}(q^{r-1}c_1+q^{r-2}c_2+\cdots+c_r)
=f_{(0,\ldots,0)}x_{(r)}+f_{\ep_r}x_{(r)}c_{q,r}.
$$
Hence $z$ lies in the $R$-module spanned by $\{x_{(r)},x_{(r)}c_{q,r}\}$ and the proposition is verified.
\end{proof}

Given $\la=(\la_1,\ldots,\la_N)\in\La(N,r)$ and $\al\in\Z_2^N$ with $N\geq 1$ and $\al_k\leq\la_k$ for $1\leq k\leq N$,
we let $\widetilde{\la}_k=\la_1+\cdots+\la_{k}$ for $1\leq k\leq N$ and introduce the following elements in $\mc C_r$:
\begin{equation}\label{cla-al}
\aligned
c^\al_\la=&(c_{q,1,\widetilde{\la}_1})^{\al_1}(c_{q,\widetilde{\la}_1+1,\widetilde{\la}_2})^{\al_2}
\cdots(c_{q,\widetilde{\la}_{N-1}+1,\widetilde{\la}_N})^{\al_N},\\
(c^\al_\la)'=&(c'_{q,1,\widetilde{\la}_1})^{\al_1}(c'_{q,\widetilde{\la}_1+1,\widetilde{\la}_2})^{\al_2}
\cdots(c'_{q,\widetilde{\la}_{N-1}+1,\widetilde{\la}_N})^{\al_N}.
\endaligned
\end{equation}
For notational simplicity, we define for $\la,\mu\in\N^N$
$$\la\leq\mu\iff\la_i\leq\mu_i\text{ for all }i=1,2,\ldots,N.$$
\begin{cor}\label{tensor-on-part-case}
Suppose $\la=(\la_1,\ldots,\la_N)\in\La(N,r)$. Then $x_{\la}\sH_{\la,R}^\sfc\cap\sH_{\la,R}^\sfc x_{\la}$ is a free $R$-module with basis
$\{x_{\la}c_\la^\al=(c^\al_\la)'x_{\la}\mid \al\in\Z^N_2, \al\leq\la\}$. A similar result holds for the $y_\la$ version.
\end{cor}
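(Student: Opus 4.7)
The plan is to reduce to Proposition~\ref{one-part case} through the tensor-product decomposition~\eqref{Parabolic-subalgebra}. Under the isomorphism $\sH^\sfc_{\la,R}\cong\sH^\sfc_{\la_1,R}\otimes\cdots\otimes\sH^\sfc_{\la_N,R}$, the element $x_\la$ corresponds to $x_{(\la_1)}\otimes\cdots\otimes x_{(\la_N)}$, which is purely even, so the super sign rule does not intervene and both left and right multiplication by $x_\la$ distribute across the tensor factors, giving
\begin{equation*}
x_\la\sH^\sfc_{\la,R}=\bigotimes_i x_{(\la_i)}\sH^\sfc_{\la_i,R},\qquad \sH^\sfc_{\la,R}x_\la=\bigotimes_i\sH^\sfc_{\la_i,R}x_{(\la_i)}.
\end{equation*}

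Since each $\sH^\sfc_{\la_i,R}$ is $R$-free (Lemma~\ref{stdbs}) and the two one-sided ideals $x_{(\la_i)}\sH^\sfc_{\la_i,R}$ and $\sH^\sfc_{\la_i,R}x_{(\la_i)}$ are $R$-direct summands of $\sH^\sfc_{\la_i,R}$ (by Lemma~\ref{ylaHC}(1) and its left-handed analogue), the intersection commutes with the tensor product:
\begin{equation*}
x_\la\sH^\sfc_{\la,R}\cap\sH^\sfc_{\la,R}x_\la=\bigotimes_i\bigl(x_{(\la_i)}\sH^\sfc_{\la_i,R}\cap\sH^\sfc_{\la_i,R}x_{(\la_i)}\bigr).
\end{equation*}
Applying Proposition~\ref{one-part case} to each factor (reindexed so that the $i$-th factor uses the Clifford generators $c_{\widetilde\la_{i-1}+1},\ldots,c_{\widetilde\la_i}$) gives an $R$-basis $\{x_{(\la_i)},\,x_{(\la_i)}c_{q,\widetilde\la_{i-1}+1,\widetilde\la_i}\}$ when $\la_i\geq 1$ (and just $\{1\}$ when $\la_i=0$). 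Tensoring these bases produces exactly $\{x_\la c^\al_\la\mid\al\in\Z_2^N,\ \al\leq\la\}$, the inequality $\al\leq\la$ recording the removed trivial factors. The identity $x_\la c^\al_\la=(c^\al_\la)'x_\la$ then follows from Lemma~\ref{yr-crq} applied in each tensor factor, since $c_{q,\widetilde\la_{i-1}+1,\widetilde\la_i}$ is supported in the $i$-th block and commutes with the generators of $\sH^\sfc_{\la_j,R}$ for $j\neq i$ by~\eqref{commute-Tc}.

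The main obstacle is the rigorous justification of the intersection-tensor interchange in the second step. If that requires more care than anticipated, the argument can be bypassed by mimicking the proof of Proposition~\ref{one-part case} directly in $\sH^\sfc_{\la,R}$: write $z\in x_\la\sH^\sfc_{\la,R}\cap\sH^\sfc_{\la,R}x_\la$ as $z=\sum_{\al\in\Z_2^r}f_\al\,x_\la c^\al$ via Lemma~\ref{ylaHC}(1), impose $zT_{s_k}=qz$ for all $s_k\in\fS_\la$ (each such $s_k$ sits inside one block $[\widetilde\la_{i-1}+1,\widetilde\la_i]$), and note that the system~\eqref{423} decouples block by block. Within each block, the analysis of Proposition~\ref{one-part case} forces $\al$ to have at most one $1$ there, and the surviving $f_\al$'s are proportional by powers of $q$ that reassemble into $c_{q,\widetilde\la_{i-1}+1,\widetilde\la_i}$. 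Summing over blocks expresses $z$ as an $R$-combination of the $x_\la c^\al_\la$'s, and linear independence follows by tracking the leading $c^{\epsilon(\al)}$-term of each $c^\al_\la$, where $\epsilon(\al)\in\Z_2^r$ places a $1$ at $\widetilde\la_i$ whenever $\al_i=1$.
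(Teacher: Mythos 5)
Your fallback argument is precisely the paper's proof: the authors also write $z=\sum_{\al\in\Z_2^r}f_\al x_\la c^\al$ via Lemma~\ref{ylaHC}(1), observe that the relations \eqref{xxx} persist for all $k$ with $\widetilde\la_{i-1}+1\leq k<\widetilde\la_i$, conclude that $f_\al=0$ whenever some block contains two $1$'s, and reassemble the surviving coefficients into the elements $x_\la c_{q,\widetilde\la_{i_1-1}+1,\widetilde\la_{i_1}}\cdots c_{q,\widetilde\la_{i_t-1}+1,\widetilde\la_{i_t}}=x_\la c^\al_\la$; linear independence and the identity $x_\la c^\al_\la=(c^\al_\la)'x_\la$ are dispatched exactly as you do, via Lemma~\ref{yr-crq} and \eqref{Cl}. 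Your primary route is genuinely different and, amusingly, runs the paper's logic backwards: the tensor factorisation of the intersection is stated in the paper only \emph{after} the corollary, as a consequence of it (the remark following Corollary~\ref{tensor-on-part-case}), whereas you want to use it as the proof. That route is viable, but the one point you flag is indeed where care is needed: over a general commutative ring, knowing that $A_i=x_{(\la_i)}\sH^\sfc_{\la_i,R}$ and $B_i=\sH^\sfc_{\la_i,R}x_{(\la_i)}$ are direct summands gives $(A_1\otimes A_2)\cap(B_1\otimes B_2)\subseteq (A_1\cap B_1)\otimes A_2$ and $\subseteq (A_1\cap B_1)\otimes B_2$, but to finish you must intersect these inside $(A_1\cap B_1)\otimes M_2$, which requires $A_1\cap B_1$ to be flat (free); that input is supplied by Proposition~\ref{one-part case} itself, and one must note it is even a direct summand (its two basis elements have a unimodular $2\times2$ minor against the standard basis $T_wc^\al$) to iterate over $N$ factors. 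So the interchange is not a formal consequence of the direct-summand property of the one-sided ideals alone, and spelling this out costs roughly as much as the blockwise computation it replaces — which is presumably why the authors chose the direct route. Either version of your argument is acceptable; if you keep the tensor route, add the flatness/direct-summand justification for the single-block intersections.
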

\begin{proof}
Clearly, by Lemma~\ref{yr-crq} and \eqref{Cl}, we have $x_{\la}c_\la^\al=(c^\al_\la)'x_{\la}$ for $\al\in\Z_2^N$ and the set is
linearly independent. We now modify the proof of Proposition~\ref{one-part case} to prove the assertion.
Without loss of generality, we may assume all $\lambda_i\neq 0$.
For $z=\sum_{\al\in\Z_2^r}f_{\al}x_{\la}c^{\al}\in x_{\la} \sH_{\la,R}^\sfc\cap\sH_{\la,R}^\sfc x_{\la}$,
\eqref{xxx} continues to hold for all $k$ such that $\widetilde{\la}_{i-1}+1\leq k<\widetilde{\la}_i$ for all $i$ (assuming $\la_0=0$).
Then we first observe that if there exist $\widetilde{\la}_{i-1}+1\leq k<l\leq\widetilde{\la}_i$ for some $1\leq i\leq N$ such that $\al_k=\al_l=1$, then $f_{\al}=0$. Now it remains to consider those $f_{\al}$ for $\al=\bf 0$ or for $\al\in\Z_2^r$ having the form $\al=\ep_{k_1}+\ep_{k_2}+\cdots+\ep_{k_t}$ for some $1\leq t\leq N$, where there exist $1\leq i_1<i_2<\cdots<i_t\leq N$
such that $\widetilde{\la}_{i_1-1}+1\leq k_1\leq\widetilde{\la}_{i_1}$, $\widetilde{\la}_{i_2-1}+1\leq k_2\leq\widetilde{\la}_{i_2}$,\ldots, $\widetilde{\la}_{i_t-1}+1\leq k_t\leq\widetilde{\la}_{i_t}$. In the latter case, by \eqref{xxx}, one can deduce that
$$
f_{\al}=q^{\widetilde{\la}_{i_1}-k_1}q^{\widetilde{\la}_{i_2}-k_2}\cdots q^{\widetilde{\la}_{i_t}-k_t}f_{\ep_{\widetilde{\la}_{i_1}}+\ep_{\widetilde{\la}_{i_2}}+\cdots+\ep_{\widetilde{\la}_{i_t}}}.
$$
Hence,
$$
z=f_{\bf 0}x_{\la}+\sum_{t=1}^N\sum_{1\leq i_1<\cdots<i_t\leq N}
f_{\ep_{\widetilde{\la}_{i_1}}+\ep_{\widetilde{\la}_{i_2}}+\cdots+\ep_{\widetilde{\la}_{i_t}}}x_\la
c_{q,\widetilde{\la}_{i_1-1}+1,\widetilde{\la}_{i_1}}c_{q,\widetilde{\la}_{i_2-1}+1,\widetilde{\la}_{i_2}}\cdots c_{q,\widetilde{\la}_{i_t-1}+1,\widetilde{\la}_{i_t}},
$$
 proving the spanning condition.
\end{proof}
\begin{rem}\label{ForMori1}
(1) By \eqref{Parabolic-subalgebra}, $\sH_{\la,R}^\sfc\cong\mc H^{\text{\sf c}}_{\la_1,R}\otimes\cdots\otimes\mc H^{\text{\sf c}}_{\la_N,R}$, where the isomorphism maps
 $x_\la$ to $x_{(\la_1)}\otimes\cdots\otimes x_{(\la_N)}$.  Hence, as free $R$-modules,
$$
x_\la\mc H^{\text{\sf c}}_{\la,R}\cong x_{(\la_1)}\mc H^{\text{\sf c}}_{\la_1,R}\otimes\cdots\otimes x_{(\la_N)}\mc H^{\text{\sf c}}_{\la_N,R}, \quad
\mc H^{\text{\sf c}}_{\la,R}x_\la\cong \mc H^{\text{\sf c}}_{\la_1,R}x_{(\la_1)}\otimes\cdots\otimes \mc H^{\text{\sf c}}_{\la_N,R}x_{(\la_N)}.
$$
The corollary above implies the following isomorphism of free $R$-modules
$$
x_{\la}\mc H^{\text{\sf c}}_{\la,R}\cap\mc H^{\text{\sf c}}_{\la,R} x_{\la}\cong (x_{(\la_1)}\mc H^{\text{\sf c}}_{\la_1,R}\cap\mc H^{\text{\sf c}}_{\la_1,R}x_{(\la_1)})\otimes\cdots\otimes
(x_{(\la_N)}\mc H^{\text{\sf c}}_{\la_N,R}\cap\mc H^{\text{\sf c}}_{\la_N,R}x_{(\la_N)}).
$$

(2) The elements $c_{q,i,j}$ have also been introduced in \cite[Lem.~6.7]{Mo}. They play important role in \eqref{TAB} below for the construction of an integral basis. However, such a role was not mentioned in {\it loc. cit}. Instead, they play a key role in the classification of simple modules via the superalgebras $\Gamma_\la$ (which is actually isomorphic to the endomorphism superalgebra ${\rm End}_{\HCR}(x_\la\HCR)\cong x_\la\HCR\cap\HCR x_\la$ by Corollary \ref{tensor-on-part-case}) defined in \cite[Definition~6.8]{Mo} (see \cite[Theorems 6.14\&6.32]{Mo}).
\end{rem}
\section{Integral bases and the base change property}\label{sec:integral basis}
We are now ready to prove, by constructing a basis, that the module $x_\la\HCR\cap \HCR x_\mu$ occurred in Lemma \ref{hom-interc} is $R$-free. This will provide an integral $R$-basis for $\mc Q_q(n,r;R)$.

Let $M_n(\N)$ be the set of $n\times n$-matrices $M=(m_{ij})$ and let
$$
M_n(\N)_r=\{M\in M_n(\N)\mid \sum m_{ij}=r\}.
$$
Given $M\in M_n(\N)$, define
$$
{\rm ro}(M)=(\sum_j m_{1j},\ldots,\sum_jm_{nj}),\quad
{\rm co}(M)=(\sum_j m_{j1},\ldots,\sum_jm_{jn}).
$$
Then ${\rm ro}(M), {\rm co}(M)\in\Lambda(n,r)$ for any $M\in M_n(\N)_r$.
Given a composition $\la\in\La(n,r)$ and $1\leq k\leq n$, define the subsets $R^\la_k\subseteq\{1,\ldots,r\}$ as follows:
$$
R^\la_k=\{\tilde\la_{k-1}+1,\tilde\la_{k-1}+2\ldots,\tilde\la_{k-1}+\la_k\} \quad(\tilde\la_0=0, \tilde\la_i=\la_1+\cdots+\la_i, 1\leq i\leq n).
$$
It is well known that double cosets of the symmetric group can be described in terms of matrices as follows.
There is a bijection
\begin{equation}\label{mapj}
\aligned
\j: M_n(\N)_r&\longrightarrow \mf J_n(r)=\{(\la,d,\mu)\mid \la,\mu\in\La(n,r),d\in\mc D_{\la,\mu}\}\\
M&\longmapsto ({\rm ro}(M), d_M,{\rm co}(M)),
\endaligned
\end{equation}
where $d_M\in\mc D_{\la,\mu}$ is the permutation satisfying $|R^{{\rm ro}(M)}_i\cap d_M(R^{{\rm co}(M)}_j)|=m_{ij}$ for $1\leq i,j\leq n$ with $M=(m_{ij})$.
Moreover, the composition $\nu(d_M)$ (resp. $\nu(d_M^{-1})$), defined in Lemma \ref{DJ}(3), is obtained by reading the entries in $M$ along columns  (resp. rows), that is,
\begin{equation}\label{nudA}
\aligned
\nu(d_M)=&(m_{11},\ldots,m_{n1},m_{12},\ldots,m_{n2},\ldots,m_{1n},\ldots,m_{nn}),\\
\nu(d_M^{-1})=&(m_{11},\ldots,m_{1n},m_{21},\ldots,m_{2n},\ldots,m_{n1},\ldots,m_{nn}).
\endaligned
\end{equation}

For the subset $\Z_2=\{0,1\}$ of $\N$, let $M_n(\Z_2)$ be the set of $n\times n$-matrices  $B=(b_{ij})$
with  $b_{ij}\in\Z_2$. Set
\begin{align*}
M_n(\N|\Z_2):=&\{(A|B)\mid A\in M_n(\N), B\in M_n(\Z_2)\},\\
M_n(\N|\Z_2)_r:=&\{(A|B)\in M_n(\N|\Z_2)\mid A+B\in M_n(\N)_r\}.
\end{align*}
Given $(A|B)\in M_n(\N|\Z_2)_r$ with $A=(a_{ij}), B=(b_{ij})$, let $m_{ij}=a_{ij}+b_{ij}$ for $1\leq i,j\leq n$ and then we have $A+B=(m_{ij})\in M_n(\N)_r$.
Hence by \eqref{mapj}, we have the permutation $d_{A+B}\in\mc D_{\la,\mu}$ with $\la={\rm ro}(A+B), \mu={\rm co}(A+B)$
and, moreover,
$$\nu_{A|B}:=\nu(d_{A+B})=(m_{11},\ldots,m_{n1},m_{12},\ldots,m_{n2},\ldots,m_{1n},\ldots,m_{nn}).$$
Let $\al_B=(b_{11},\ldots,b_{n1},\ldots,b_{1n},\ldots,b_{nn})\in\Z_2^{n^2}$.
Then by \eqref{cla-al} we are ready to introduce the following elements:
\begin{align}
c_{A|B}=&c^{\al_B}_{\nu_{A|B}}\in\mc C_r, \label{cAB}\\
T_{A|B}=&x_{\la}T_{d_{A+B}}c_{A|B}\sum_{\sigma\in\mathcal D_{\nu_{A|B}}\cap \mf S_{\mu}}T_{\sigma}.\label{TAB}
\end{align}
Given $\la,\mu\in\La(n,r)$ and $d\in\mc D_{\la,\mu}$, let
\begin{align*}
M_n(\N)_{\la,\mu}:=&\{A\in M_n(\N)\mid {\rm ro}(A)=\la,{\rm co}(A)=\mu\},\\
M_n(\N|\Z_2)_{\la,\mu}:=&\{(A|B)\in M_n(\N|\Z_2)\mid {\rm ro}(A+B)=\la,{\rm co}(A+B)=\mu\},\\
M_n(\N|\Z_2)^d_{\la,\mu}:=&\{(A|B)\in M_n(\N|\Z_2)_{\la,\mu}\mid d_{A+B}=d\}.
\end{align*}
Note that, by sending $A$ to $(A|0)$, we may regard $M_n(\N)_{\la,\mu}$ as a subset of $M_n(\N|\Z_2)_{\la,\mu}$. Note also that
\begin{equation}\label{yyy}
M_n(\N|\Z_2)_{\la,\mu}=\bigcup_{d\in\mc D_{\la,\mu}}M_n(\N|\Z_2)_{\la,\mu}^d.
\end{equation}
\begin{lem}\label{linear-ind}
For $\la,\mu\in\La(n,r)$, the set
$
\{T_{A|B}\mid (A|B)\in M_n(\N|\Z_2)_{\la,\mu}\}
$
is a linearly independent subset of $x_\la\HCR\cap\HCR x_\mu$.
\end{lem}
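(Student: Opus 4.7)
\medskip
\noindent\textbf{Proof plan.}
The plan is to treat the containment and the linear independence in turn. For $T_{A|B}\in x_\la\HCR$, the inclusion is immediate from \eqref{TAB}. To see $T_{A|B}\in\HCR x_\mu$, I would invoke Corollary~\ref{ylaTd} to rewrite
\[
x_\la T_{d_{A+B}}=\bigg(\sum_{u'\in\mc D^{-1}_{\nu(d_{A+B}^{-1})}\cap\fS_\la}T_{u'}\bigg)T_{d_{A+B}}\,x_{\nu_{A|B}},
\]
then commute $x_{\nu_{A|B}}$ past $c_{A|B}=c_{\nu_{A|B}}^{\al_B}$ using Corollary~\ref{tensor-on-part-case} to get $x_{\nu_{A|B}}c_{A|B}=(c_{A|B})'\,x_{\nu_{A|B}}$, and finally apply $x_{\nu_{A|B}}\sum_{\sigma\in\mc D_{\nu_{A|B}}\cap\fS_\mu}T_\sigma=x_\mu$, which follows from the length-additive factorisation $\fS_\mu=\fS_{\nu_{A|B}}\cdot(\mc D_{\nu_{A|B}}\cap\fS_\mu)$. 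This realises $T_{A|B}$ as an element of $\HCR x_\mu$.

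\medskip
\noindent For linear independence, I would expand each $T_{A|B}$ in the basis $\{x_\la T_dc^\gamma T_\sigma\mid(d,\sigma,\gamma)\in X_{\la,\mu}\}$ of $x_\la\HCR$ provided by Corollary~\ref{trivial-intersect}. Writing $c_{A|B}=\sum_\gamma r_\gamma^{A|B}\,c^\gamma$ in the standard basis of $\mc C_r$, this gives
\[
T_{A|B}=\sum_{\gamma\in\Z_2^r}\;\sum_{\sigma\in\mc D_{\nu_{A|B}}\cap\fS_\mu}r_\gamma^{A|B}\,x_\la T_{d_{A+B}}c^\gamma T_\sigma.
\]
By \eqref{yyy}, terms with distinct $d_{A+B}$ lie in disjoint summands of the decomposition in Corollary~\ref{trivial-intersect}, so it suffices to fix $d\in\mc D_{\la,\mu}$ and establish linear independence among those $T_{A|B}$ with $d_{A+B}=d$. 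By the bijection \eqref{mapj}, this fixes $M:=A+B$ (and thus also $\nu_{A|B}=\nu(d)$), leaving the only freedom in $B\in M_n(\Z_2)$ with $B\leq M$.

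\medskip
\noindent For such fixed $M$, the explicit form of $c_{\nu_M}^{\al_B}$ from \eqref{cijq}--\eqref{cla-al} expands as
\[
c_{\nu_M}^{\al_B}=\sum_{\gamma}\,q^{\sum_{i:\al_{B,i}=1}(\tilde\nu_i-k_i(\gamma))}\,c^\gamma,
\]
summed over $\gamma\in\Z_2^r$ satisfying $|\{k\in I_i:\gamma_k=1\}|=\al_{B,i}$ for every block $I_i=\{\tilde\nu_{i-1}+1,\ldots,\tilde\nu_i\}$ of $\nu_M$, where $k_i(\gamma)$ denotes the unique element of $\gamma\cap I_i$ when $\al_{B,i}=1$. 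Since the $I_i$ partition $\{1,\ldots,r\}$ in strictly increasing order (by the columnwise reading in \eqref{nudA}), no anti-commutation signs arise, and the supports $\{\gamma:r_\gamma^{M-B|B}\neq 0\}$ for distinct $B$'s are pairwise disjoint: the numbers $|\gamma\cap I_i|$ read off $\al_B$ from any $\gamma$ in the support. Choosing the distinguished element $\gamma_B\in\Z_2^r$ with $(\gamma_B)_k=1\iff k=\tilde\nu_i$ for some $i$ with $\al_{B,i}=1$ yields $r_{\gamma_B}^{M-B|B}=q^0=1$ and $r_{\gamma_B}^{M-B'|B'}=0$ for $B'\neq B$, so extracting the coefficient of $x_\la T_dc^{\gamma_B}T_\sigma$ in a putative dependence $\sum_B\lambda_{M-B|B}T_{M-B|B}=0$ forces each $\lambda_{M-B|B}=0$. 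The principal obstacle is the careful bookkeeping of the expansion of $c_{\nu_M}^{\al_B}$: one must verify that the column-reading of $M$ makes the blocks $I_i$ appear in strictly increasing order on $\{1,\ldots,r\}$, so that products $c_{k_1}c_{k_2}\cdots$ with $k_i\in I_i$ stay in normal form and no sign corrections from \eqref{Cl} are triggered.
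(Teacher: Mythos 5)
Your proposal is correct. The containment part is exactly the paper's argument: Corollary~\ref{ylaTd}, the commutation $x_{\nu(d)}c_{A|B}=c'_{A|B}x_{\nu(d)}$ from Corollary~\ref{tensor-on-part-case} (note $\al_B\le\nu_{A|B}$ holds automatically since $b_{ij}\le a_{ij}+b_{ij}$), and $x_{\nu(d)}\sum_{\sigma}T_\sigma=x_\mu$. For linear independence your execution differs from the paper's in a worthwhile way. The paper expands a putative dependence in the basis $\mathscr B'=\{T_uT_dc^\al T_\sigma\}$ of all of $\HCR$ (Lemma~\ref{HC-basis2}), isolates for each $d$ the leading term $T_{w^0_\la d}\,c_{A|B}\,T_{\sigma_0}$ attached to the longest elements $w^0_\la$ and $\sigma_0$, and then invokes, without proof, the linear independence in $\mc C_r$ of the elements $c^{\al_B}_{\nu(d)}$ with $\al_B\le\nu(d)$. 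You instead work directly in the basis $\{x_\la T_dc^\gamma T_\sigma\mid(d,\sigma,\gamma)\in X_{\la,\mu}\}$ of $x_\la\HCR$ supplied by Corollary~\ref{trivial-intersect}, which packages $x_\la$ once and for all and makes the leading-term extraction unnecessary; the separation by $d$, and hence by the fixed matrix $M^d=A+B$, is then immediate from the indexing of that basis. Moreover, you prove the Clifford-independence step the paper declares ``easy to see'': the columnwise reading of $M^d$ makes the blocks $I_i$ consecutive increasing intervals, so the expansion of $c^{\al_B}_{\nu(d)}$ stays in normal form with no signs, the supports for distinct $B$ are disjoint (the block occupation numbers of any monomial recover $\al_B$, hence $B$), and the distinguished monomial $c^{\gamma_B}$ occurs with coefficient exactly $1$. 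That last point is a genuine refinement worth keeping: since $q\in R$ need not be invertible (or nonzero), it is important that one can extract a coefficient which is a unit rather than merely a nonzero power of $q$, and your choice $k_i=\tilde\nu_i$ guarantees this over an arbitrary coefficient ring of characteristic $\ne2$.
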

\begin{proof} We first prove that  $T_{A|B}\in x_\la\HCR\cap\HCR x_\mu$ for each $(A|B)\in M_n(\N|\Z_2)_{\la,\mu}$.
Clearly, $T_{A|B}\in x_\la\HCR$ by \eqref{TAB}.
Let $d=d_{A+B}$. Then by~\eqref{nudA}
$$
\nu(d)=(m_{11},\ldots,m_{n1},m_{12},\ldots,m_{n2},\ldots,m_{1n},\ldots,m_{nn}),
$$
where $A+B=(m_{ij})$.
Hence, by Corollary~\ref{tensor-on-part-case} and \eqref{cAB}, $x_{\nu(d)}c_{A|B}=c'_{A|B}x_{\nu(d)},$
where $c'_{A|B}=(c^{\al_B}_{\nu_{A|B}})'$.
On the other hand, the partition
\begin{equation}\label{decomp-Smu}
\mf S_\mu=\dot{\bigcup}_{\sigma\in\mc D_{\nu(d)}\cap \mf S_\mu}\mf S_{\nu(d)}\sigma
\end{equation}
implies $x_\mu=x_{\nu(d)}\sum_{\sigma\in\mc D_{\nu(d)}\cap \mf S_\mu}T_\sigma.$
Hence, by \eqref{TAB} and Corollary \ref{ylaTd}, one can deduce that
\begin{align}
T_{A|B}=&\sum_{u'\in\mc D^{-1}_{\nu(d^{-1})}\cap\mf S_\la}T_{u'}T_{d}x_{\nu(d)}c_{A|B}\sum_{\sigma\in\mc D_{\nu(d)}\cap \mf S_\mu}T_\sigma\notag\\
=&\sum_{u'\in\mc D^{-1}_{\nu(d^{-1})}\cap\mf S_\la}T_{u'}T_{d}c'_{A|B}x_{\nu(d)}\sum_{\sigma\in\mc D_{\nu(d)}\cap \mf S_\mu}T_\sigma
=\sum_{u'\in\mc D^{-1}_{\nu(d^{-1})}\cap\mf S_\la}T_{u'}T_d c'_{(A|B)} x_\mu,\label{TAB-expand3}
\end{align}
proving $T_{A|B}\in \HCR x_\mu$.

We now prove the set is linearly independent. Suppose
$$\sum_{(A|B)\in M_n(\N|\Z_2)_{\la,\mu}}f_{A|B}T_{A|B}=0.$$
By the new basis $\mathscr B'$ on Lemma~\ref{HC-basis2} and \eqref{yyy}, we have, for every $d\in\mc D_{\la,\mu}$,
\begin{equation}\label{linearcombTAB}
\sum_{(A|B)\in M_n(\N|\Z_2)^d_{\la,\mu}}f_{A|B}T_{A|B}=0
\end{equation}
Let $w_\la^0$ be the longest element in the Young subgroup $\mf S_\la$.
By \eqref{decomp-Smu} and the fact that $\ell(w\sigma)=\ell(w)+\ell(\sigma)$ for $w\in\mf S_{\nu(d)}$ and $\sigma\in\mc D_{\nu(d)}\cap\mf S_\mu$,
there exists a unique longest element $\sigma_0$ in $\mc D_{\nu(d)}\cap\mf S_\mu$ such that $w_\mu^0=w^0_{\nu(d)}\sigma_0$.
Suppose $(A|B)\in M_n(\N|\Z_2)^d_{\la,\mu}$.
Then by~\eqref{xla}, Lemma~\ref{TuTdTw} and \eqref{HC-PBW}, we obtain
\begin{equation}\label{TAB-expand2}
\aligned
T_{A|B}&=T_{w^0_\la}T_{d}c_{A|B}T_{\sigma_0}+\sum_{u\in\fS_\la,\sigma\in\mc D_{\nu_{A|B}}\cap \mf S_\mu\atop u< w^0_\la\text{ or } \sigma<\sigma_0}T_u T_{d}c_{A|B}T_{\sigma}\\
&=
T_{w^0_\la}T_{d}c_{A|B}T_{\sigma_0}+\mf{l}_{A|B},
\endaligned
\end{equation}
where 
$\mf l_{A|B}$ is an $R$-linear combination of the elements $T_{ud}c_{A|B}T_\sigma$ with
$u\in\mf S_\la,\sigma\in\mc D_{\nu(d)}\cap \fS_\mu$ and $\ell(ud\sigma)<\ell(w^0_\la d\sigma_0)=\ell(w^0_\la)+\ell(d)+\ell(\sigma_0)$.  This together with~\eqref{linearcombTAB} gives rise to
\begin{align}\label{sum-ABd}
\sum_{(A|B)\in M_n(\N|\Z_2)^d_{\la,\mu}}f_{A|B}T_{w^0_\la d}c_{A|B}T_{\sigma_0}
+\sum_{(A|B)\in M_n(\N|\Z_2)^d_{\la,\mu}}f_{A|B}\mf l_{A|B}=0.
\end{align}
By Lemma~\ref{HC-basis2}, one can deduce that
\begin{equation}\label{hABcAB}
\sum_{(A|B)\in M_n(\N|\Z_2)^d_{\la,\mu}}f_{A|B}T_{w^0_\la d}c_{A|B}T_{\sigma_0}=0.
\end{equation}

Recall the bijection $\j$ in~\eqref{mapj}.
Let $M^d=(m^d_{ij})\in M_n(\N)$ be the unique matrix such that $\j(M^d)=(\la,d,\mu)$.
Then $A+B=M^d$ for all $(A|B)\in M_n(\N|\Z_2)^d_{\la,\mu}$ and we have a bijection between the sets
$M_n(\N|\Z_2)^d_{\la,\mu}$ and $\{B\in M_n(\Z_2)\mid \al_B\leq \nu(d)\}$
where $\nu(d)=(m^d_{11},\ldots,m^d_{n1},\ldots,m^d_{1n},\ldots,m^d_{nn})$ and $\alpha_B$ as defined in \eqref{cAB}.
Now, it is easy to see that the set $\{c_{A|B}=c^{\al_B}_{\nu(d)}\mid B\in M_n(\Z_2),\al_B\leq \nu(d)\}$ is linearly independent in $\mc C_r$.
Therefore, by \eqref{hABcAB} and Lemma~\ref{HC-basis2} again, we have $f_{A|B}=0$ for all $ (A|B)\in M_n(\N|\Z_2)^d_{\la,\mu}$.
\end{proof}
\begin{prop}\label{hom-basis}
Suppose $\la,\mu\in\La(n,r)$. Then the intersection $x_\la\HCR\cap\HCR x_\mu$ is a free $R$-module with basis
$$
\{T_{A|B}\mid (A|B)\in M_n(\N|\Z_2)_{\la,\mu}\}.
$$
\end{prop}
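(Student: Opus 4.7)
The linear independence of $\{T_{A|B}\}$ is Lemma \ref{linear-ind}, so only the spanning needs to be established. The plan is a three-step reduction: (i) split the problem along double cosets, (ii) peel off the Hecke part via Corollary \ref{ylaTd}, and (iii) handle the remaining Clifford content by a parabolic tensor-product factorisation and a refined version of Corollary \ref{tensor-on-part-case}. For (i), Corollary \ref{trivial-intersect} provides the $\sH_{\mu,R}^{\mathsf c}$-stable decomposition $x_\la\HCR=\bigoplus_{d\in\mc D_{\la,\mu}} x_\la T_d\sH_{\mu,R}^{\mathsf c}$; combined with the characterisation of $\HCR x_\mu$ from Lemma \ref{ylaHC}, writing $z=\sum_d z_d$ gives $z_dT_{s_k}=qz_d$ for every $d$ and every $s_k\in\mf S_\mu$.

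For (ii), Corollary \ref{ylaTd} writes $x_\la T_d=X_dT_dx_{\nu(d)}$ with $X_d=\sum_{u\in\mc D_{\nu(d^{-1})}^{-1}\cap\mf S_\la}T_u$, so $z_d=X_dT_d\tilde h_d$ for a unique $\tilde h_d\in x_{\nu(d)}\sH_{\mu,R}^{\mathsf c}$. The second direct sum in Corollary \ref{trivial-intersect}, $\HCR=\bigoplus_d\bigoplus_u T_uT_d\sH_{\mu,R}^{\mathsf c}$, shows that left multiplication by $X_dT_d$ is injective on $\sH_{\mu,R}^{\mathsf c}$. Hence $z_dT_{s_k}=qz_d$ is equivalent to $\tilde h_dT_{s_k}=q\tilde h_d$ for every $s_k\in\mf S_\mu$, so $\tilde h_d\in x_{\nu(d)}\sH_{\mu,R}^{\mathsf c}\cap\sH_{\mu,R}^{\mathsf c} x_\mu$.

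For (iii), \eqref{Parabolic-subalgebra} together with the factorisations $x_\mu=\bigotimes_k x_{(\mu_k)}$ and $x_{\nu(d)}=\bigotimes_k x_{\nu_k}$ (where $\nu_k$ is the $k$-th column of $M^d$, a composition of $\mu_k$) gives
$$x_{\nu(d)}\sH_{\mu,R}^{\mathsf c}\cap\sH_{\mu,R}^{\mathsf c} x_\mu \;=\; \bigotimes_k\big(x_{\nu_k}\sH_{\mu_k,R}^{\mathsf c}\cap\sH_{\mu_k,R}^{\mathsf c} x_{(\mu_k)}\big).$$
The key sub-lemma is that each factor has $R$-basis $\{x_{\nu_k}c_{\nu_k}^{\al_k}Y_{\nu_k}^{(k)}:\al_k\in\Z_2^n,\,\al_k\leq\nu_k\}$, with $Y_{\nu_k}^{(k)}=\sum_{\sigma\in\mc D_{\nu_k}}T_\sigma$ satisfying $x_{(\mu_k)}=x_{\nu_k}Y_{\nu_k}^{(k)}$. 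Containment follows from Lemma \ref{yr-crq}; spanning is obtained by adapting the coefficient analysis of Proposition \ref{one-part case}: expand any element of $\sH_{\mu_k,R}^{\mathsf c} x_{(\mu_k)}$ as $\sum_\al g_\al c^\al x_{(\mu_k)}$ and impose $T_{s_j}(\cdot)=q(\cdot)$ for every $s_j\in\mf S_{\nu_k}$ (the condition from Lemma \ref{ylaHC} that places the element in $x_{\nu_k}\sH_{\mu_k,R}^{\mathsf c}$). This yields the same relations as \eqref{xxx}, but now only for $j,j+1$ in a common block of $\nu_k$; they force $g_\al=0$ unless $\al$ has at most one $1$ per block, and within each block pin down $g_\al$ up to a single scalar weighted by powers of $q$---exactly the coefficients appearing in $(c_{\nu_k}^{\al_k})'x_{(\mu_k)}=x_{\nu_k}c_{\nu_k}^{\al_k}Y_{\nu_k}^{(k)}$.

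Assembling over $k$, $\tilde h_d$ is an $R$-linear combination of $\{x_{\nu(d)}c_{A|B}Y_d:B\in M_n(\Z_2),\,A+B=M^d\}$ with $Y_d=\sum_{\sigma\in\mc D_{\nu(d)}\cap\mf S_\mu}T_\sigma$ the tensor product of the $Y_{\nu_k}^{(k)}$. Multiplying on the left by $X_dT_d$ reproduces the identity $T_{A|B}=X_dT_d x_{\nu(d)}c_{A|B}Y_d$ already derived in the proof of Lemma \ref{linear-ind}, so $z_d$ lies in the span of $\{T_{A|B}:(A|B)\in M_n(\N|\Z_2)_{\la,\mu}^d\}$, and summing over $d$ finishes the argument. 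The main obstacle is the sub-lemma: though patterned on Proposition \ref{one-part case}, the coefficient analysis must carefully track the restricted range $s_j\in\mf S_{\nu_k}\subsetneq\mf S_{\mu_k}$ and recognise the surviving patterns as precisely the Clifford polynomials $c_{\nu_k}^{\al_k}$ from \eqref{cla-al}.
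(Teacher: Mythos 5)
Your argument is correct and is in all essentials the paper's own: split $z$ along double cosets using Corollary \ref{trivial-intersect}, use the direct sum $\bigoplus_{d,u}T_uT_d\sH^{\mathsf c}_{\mu,R}$ to transfer the condition $zT_{s_k}=qz$ to each component $z_d$, peel off the Hecke part via Corollary \ref{ylaTd}, and finish with the block Clifford analysis; linear independence is delegated to Lemma \ref{linear-ind} exactly as in the paper. The one place you genuinely diverge is the last step. The paper first uses $z_d\in \sH^{\mathsf c}_{\mu,R}\cap\HCR x_\mu=\sH^{\mathsf c}_{\mu,R}x_\mu$ to write $z_d=\sum_\beta f^d_\beta c^\beta x_\mu$, substitutes $x_\mu=x_{\nu(d)}\sum_\sigma T_\sigma$, and strips the $T_\sigma$ using $\sH^{\mathsf c}_{\mu,R}=\bigoplus_{\sigma}\sH^{\mathsf c}_{\nu(d),R}T_\sigma$, landing precisely in the symmetric intersection $x_{\nu(d)}\sH^{\mathsf c}_{\nu(d),R}\cap\sH^{\mathsf c}_{\nu(d),R}x_{\nu(d)}$, which is Corollary \ref{tensor-on-part-case} verbatim. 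You instead keep the mixed intersection $x_{\nu(d)}\sH^{\mathsf c}_{\mu,R}\cap\sH^{\mathsf c}_{\mu,R}x_\mu$ and prove a new sub-lemma for it by a blockwise rerun of the coefficient analysis of Proposition \ref{one-part case}. Your sub-lemma is true and your sketch of its proof is sound, but it is avoidable extra work: it follows from Corollary \ref{tensor-on-part-case} by one more application of the same $T_\sigma$-stripping (compare the $\sigma$-components of $\sum_\al g_\al c^\al x_{\nu(d)}\sum_\sigma T_\sigma$ with the expansion in $x_{\nu(d)}\sH^{\mathsf c}_{\mu,R}$). Relatedly, your route invokes the identity $x_{\nu(d)}\sH^{\mathsf c}_{\mu,R}\cap\sH^{\mathsf c}_{\mu,R}x_\mu=\bigotimes_k\bigl(x_{\nu_k}\sH^{\mathsf c}_{\mu_k,R}\cap\sH^{\mathsf c}_{\mu_k,R}x_{(\mu_k)}\bigr)$, i.e.\ that these intersections commute with $\otimes_R$; this is true for these free summands but is not entirely automatic over a general $R$ and deserves a word, whereas the paper's proof never needs it (the tensor statement appears there only as a remark, derived after the fact).
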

\begin{proof}
By Lemma~\ref{linear-ind}, it suffices to show that the set spans $x_\la\HCR\cap\HCR x_\mu$.
Take an arbitrary $z\in x_\la\HCR\cap\HCR x_\mu$. By Corollary~\ref{trivial-intersect},  we may write
$$
z=\sum_{(d,\sigma,\al)\in X_{\la,\mu}}a_{d,\sigma,\al}\,x_\la T_dc^\al T_\sigma\quad\text{for some}\quad
a_{d,\sigma,\al}\in R.
$$
This together with Corollary \ref{ylaTd} gives rise to
$$
z=\sum_{d}\bigg(\sum_{u'\in\mc D^{-1}_{\nu(d^{-1})}\cap \mf S_\la} T_{u'}\bigg)T_d\sum_{\sigma,\al}a_{d,\sigma,\al}\,x_{\nu(d)}\,c^\al T_\sigma.
$$
For each $d\in\mc D_{\la,\mu}$, write
\begin{equation}\label{zd}
z_d=\sum_{\sigma\in\mc D_{\nu(d)}\cap\mf S_\mu,\al\in\Z_2^r}a_{d,\sigma,\al}\,x_{\nu(d)}\,c^\al T_\sigma\in\sH_{\mu,R}^{\mathsf c}.
\end{equation}
Then
\begin{equation}\label{z-zd}
z=\sum_{d\in\mc D_{\la,\mu}}\sum_{u'\in\mc D^{-1}_{\nu(d^{-1})}\cap \mf S_\la}T_{u'}T_d \,z_d.
\end{equation}
Since $z\in\HCR x_\mu$, by Lemma~\ref{ylaHC}, we have $zT_{s_k}=qz$ for $s_k\in\mf S_\mu$ and hence,
\begin{equation}\label{sum-zero}
\sum_{d\in\mc D_{\la,\mu}}\sum_{u'\in\mc D^{-1}_{\nu(d^{-1})}\cap \mf S_\la}T_{u'}T_d (z_dT_{s_k}-qz_d)=0
\end{equation}
Since $z_d,z_dT_{s_k}\in \mc H^c_{\mu,R}$ by \eqref{zd} for $s_k\in\mf S_\mu$, one can deduce that
\begin{equation}\label{z-sum-subterm}
T_{u'}T_d (z_dT_{s_k}-qz_d)\in T_{u'}T_d\mc H^c_{\mu,R}.
\end{equation}
By the second direct sum decomposition in Corollary~\ref{trivial-intersect},  we obtain
$
T_{u'}T_d (z_dT_{s_k}-qz_d)=0.
$
Writing $z_dT_{s_k}-qz_d$ as a linear combination of $T_wc^\alpha$ with $w\in \fS_\mu,\alpha\in\Z_2^r$, linear independence of
$T_{u'}T_dT_wc^\alpha$ (Lemma~\ref{stdbs}) implies that $z_dT_{s_k}-qz_d=0$,
for $s_k\in\mf S_\mu$ and $d\in\mc D_{\la,\mu}$.
By Lemma~\ref{ylaHC} again, we have
$z_d\in\HCR x_\mu \cap \mc H^{\mathsf c}_{\mu,R}=\mc H^{\mathsf c}_{\mu,R}x_\mu.$
Thus, we can write $z_d$ for each $d\in\mc D_{\la,\mu}$ as
$$
z_d=\sum_{\beta\in\Z_2^r}f^d_\beta \, c^\beta\, x_\mu=\sum_{\beta\in\Z_2^r}f^d_\beta\, c^\beta
x_{\nu(d)}\sum_{\sigma\in\mc D_{\nu(d)}\cap\mf S_\mu}T_\sigma
$$
for some $f^d_{\beta}\in R$ by \eqref{decomp-Smu}.
Hence, by \eqref{zd}, we obtain
$$
\sum_{\sigma\in\mc D_{\nu(d)}\cap\mf S_\mu,\al\in\Z_2^r}a_{d,\sigma,\al}\,x_{\nu(d)}\,c^\al T_\sigma=z_d=\sum_{\beta\in\Z_2^r}f^d_\beta \,c^\beta \,x_{\nu(d)}\sum_{\sigma\in\mc D_{\nu(d)}\cap\mf S_\mu}T_\sigma.
$$
This means
$$
\sum_{\sigma\in\mc D_{\nu(d)}\cap\mf S_\mu}\big(\sum_{\al\in\Z_2^r}a_{d,\sigma,\al}\,x_{\nu(d)}\,c^\al- \sum_{\beta\in\Z_2^r}f^d_\beta\, c^\beta\, x_{\nu(d)}\big)T_{\sigma}=0.
$$
But the left hand side belongs to $\sH_{\mu,R}^{\mathsf c}=\bigoplus_{\sigma\in \mc D_{\nu(d)}\cap\fS_\mu}\sH_{\nu(d),R}^{\mathsf c}T_\sigma,$ forcing every summand is 0. Consequently,
\begin{equation}\label{y-nud}
\sum_{\al\in\Z_2^r}a_{d,\sigma,\al}\,x_{\nu(d)}\,c^\al- \sum_{\beta\in\Z_2^r}f^d_\beta\, c^\beta \,x_{\nu(d)}=0
\end{equation}
for each $\sigma\in\mc D_{\nu(d)}\cap\mf S_\mu$.
Therefore,
\begin{equation}\label{sum-intersect}
\sum_{\al\in\Z_2^r}a_{d,\sigma,\al}\,x_{\nu(d)}\,c^\al=\sum_{\beta\in\Z_2^r}f^d_\beta \,c^\beta \,x_{\nu(d)}
\in x_{\nu(d)}\,\mc H^c_{\nu(d),R}\cap \mc H^c_{\nu(d),R}\,x_{\nu(d)}
\end{equation}
for each $d\in\mc D_{\la,\mu}$ and $\sigma\in\mc D_{\nu(d)}\cap\mf S_\mu$.

On the other hand, since $d\in\mc D_{\la,\mu}$, there exists a unique $M^d=(m^d_{ij})\in M_n(\N)$ such that $\j(M^d)=(\la,d,\mu)$
with $\nu(d)=(m^d_{11},\ldots,m^d_{n1},\ldots,m^d_{1n},\ldots,m^d_{nn})$. (Recall $\nu(d)$ is defined by $\fS_{\nu(d)}=d^{-1}\fS_\la d\cap\fS_\mu$.)
Then, by  \eqref{sum-intersect} and Corollary \ref{tensor-on-part-case},
\begin{equation}\label{sum-intersect1}
\sum_{\al\in\Z_2^r}a_{d,\sigma,\al}\,x_{\nu(d)}\,c^\al=\sum_{B\in M_n(\Z_2), B\leq M^d}g^d_{B}\,x_{\nu(d)}\,c^{\al_B}_{\nu(d)}
\end{equation}
for some $g^d_{B}\in R$, where $B=(b_{ij})\in M_n(\Z_2)\leq M^d$ means $b_{ij}\leq m^d_{ij}$ for $1\leq i,j\leq n$ and $\al_B=(b_{11},\ldots,b_{n1},\ldots,b_{1n},\ldots,b_{nn})\in\Z_2^{n^2}$.

For each $B\in M_n(\Z_2)$ with $B\leq M^d$, we let $A(d,B)=M^d-B\in M_n(\Z_2)$ and then $d=d_{A(d,B)+B}$ and $(A(d,B)|B)\in M_n(\N|\Z_2)^d_{\la,\mu}$.
Then by \eqref{cAB} and \eqref{sum-intersect1} we obtain
$$
\sum_{\al\in\Z_2^r}a_{d,\sigma,\al}\,x_{\nu(d)}\,c^\al=
\sum_{B\in M_n(\Z_2), B\leq M^d}g^d_{B}\,x_{\nu(d)}\,c_{A(d,B)|B}
$$
and hence, by \eqref{zd},
$$
z_d=\sum_{\sigma\in\mc D_{\nu(d)}\cap\mf S_\mu}
\sum_{B\in M_n(\Z_2), B\leq M^d}g^d_{B}\,x_{\nu(d)}\,c_{A(d,B)|B}T_\sigma.
$$
Therefore, by \eqref{z-zd}, one can deduce that
\begin{align*}
z=&\sum_{d\in\mc D_{\la,\mu}}\sum_{u'\in\mc D^{-1}_{\nu(d^{-1})}\cap \mf S_\la}T_{u'}T_d\sum_{\sigma\in\mc D_{\nu(d)}\cap\mf S_\mu}
\sum_{B\in M_n(\Z_2), B\leq M^d}g^d_{B}\,x_{\nu(d)}\,c_{A(d,B)|B}\,T_\sigma\\
=&\sum_{d\in\mc D_{\la,\mu}}\sum_{B\in M_n(\Z_2), B\leq M^d}g^d_{B}\sum_{u'\in\mc D^{-1}_{\nu(d^{-1})}\cap \mf S_\la}T_{u'}T_d\,x_{\nu(d)}\,c_{A(d,B)|B}\sum_{\sigma\in\mc D_{\nu(d)}\cap\mf S_\mu}T_\sigma\\
=&\sum_{d\in\mc D_{\la,\mu}} \sum_{B\in M_n(\Z_2), B\leq M^d}g^d_{B}\,x_\la\, T_d\, c_{A(d,B)|B}\,\sum_{\sigma\in\mc D_{\nu(d)}\cap\mf S_\mu}T_\sigma\\
=&\sum_{d\in\mc D_{\la,\mu}}\sum_{B\in M_n(\Z_2), B\leq M^d}g^d_{B}\,T_{A(d,B)|B}
\end{align*}
since $d=d_{A(d,B)+B}$. This proves the proposition.
\end{proof}
For $(A|B)\in M_n(\N|\Z_2)_r$, define
$\phi_{(A|B)}\in \mc Q_q(n,r;R)={\rm End}_{\HCR}(\oplus_{\mu\in\La(n,r)}x_\mu\HCR)$ via
\begin{equation}\label{tilde-phi}
\phi_{(A|B)}(x_\mu h)=\delta_{\mu,{\rm co}(A+B)} T_{A|B}h
\end{equation}
for $\mu\in\La(n,r)$ and $h\in\HCR$.
\begin{thm}\label{QqSchur-basis}
Let $R$ be a commutative ring of characteristic not equal to 2.
Then the algebra $\mc Q_q(n,r;R)$ is a free $R$-module with a basis given by the set
$$
\{\phi_{(A|B)}\mid (A|B)\in M_n(\N|\Z_2)_r\}.
$$
\end{thm}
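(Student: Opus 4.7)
The plan is to assemble Theorem \ref{QqSchur-basis} purely from the pieces already in place, mainly Lemma \ref{hom-interc} and Proposition \ref{hom-basis}. First, I would use the fact that, since $\oplus_{\mu\in\La(n,r)}x_\mu\HCR$ is a direct sum of $\HCR$-modules, the endomorphism algebra splits as
\begin{equation*}
\mc Q_q(n,r;R)=\bigoplus_{\la,\mu\in\La(n,r)}{\rm Hom}_{\HCR}(x_\mu\HCR,\,x_\la\HCR).
\end{equation*}
So it suffices to give, for each pair $(\la,\mu)$, an $R$-basis for the $(\la,\mu)$-summand indexed by $M_n(\N|\Z_2)_{\la,\mu}$, and then reassemble.

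Next, by Lemma \ref{hom-interc}, the isomorphism $\Phi$ there sends $f\in{\rm Hom}_{\HCR}(x_\mu\HCR,x_\la\HCR)$ to $f(x_\mu)\in x_\la\HCR\cap\HCR x_\mu$, and conversely any $z$ in the intersection yields a homomorphism $f_z$ determined by $f_z(x_\mu h)=zh$. Feeding the basis from Proposition \ref{hom-basis} through the inverse map $z\mapsto f_z$ produces, for each $(A|B)\in M_n(\N|\Z_2)_{\la,\mu}$, a homomorphism $f_{T_{A|B}}$ that agrees exactly with the endomorphism $\phi_{(A|B)}$ defined in \eqref{tilde-phi}, once $\phi_{(A|B)}$ is extended by zero on the summands $x_{\mu'}\HCR$ with $\mu'\neq\mu={\rm co}(A+B)$. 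Thus $\{\phi_{(A|B)}\mid (A|B)\in M_n(\N|\Z_2)_{\la,\mu}\}$ is an $R$-basis for ${\rm Hom}_{\HCR}(x_\mu\HCR,x_\la\HCR)$.

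Finally, I would observe the set-theoretic disjoint decomposition
\begin{equation*}
M_n(\N|\Z_2)_r=\dot\bigcup_{\la,\mu\in\La(n,r)}M_n(\N|\Z_2)_{\la,\mu},
\end{equation*}
which holds because any $(A|B)\in M_n(\N|\Z_2)_r$ uniquely determines the row and column sums $\la={\rm ro}(A+B)$ and $\mu={\rm co}(A+B)$ of $A+B$. Taking the union of the bases of the Hom-summands therefore produces the claimed basis $\{\phi_{(A|B)}\mid (A|B)\in M_n(\N|\Z_2)_r\}$ of $\mc Q_q(n,r;R)$.

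There is essentially no obstacle here: the genuine work — proving freeness of $x_\la\HCR\cap\HCR x_\mu$ and producing the explicit elements $T_{A|B}$ — was already carried out in Proposition \ref{hom-basis}. The only small bookkeeping step is checking that under the Hom/intersection identification of Lemma \ref{hom-interc}, the endomorphism associated with $T_{A|B}$ is literally $\phi_{(A|B)}$ of \eqref{tilde-phi}, which is immediate from the definitions $\phi_{(A|B)}(x_\mu h)=\delta_{\mu,{\rm co}(A+B)}T_{A|B}h$ and $f_z(x_\mu h)=zh$.
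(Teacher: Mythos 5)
Your proposal is correct and follows essentially the same route as the paper's proof: decompose $\mc Q_q(n,r;R)$ into the Hom-summands ${\rm Hom}_{\HCR}(x_\mu\HCR,x_\la\HCR)$, identify each with $x_\la\HCR\cap\HCR x_\mu$ via Lemma \ref{hom-interc}, transport the basis of Proposition \ref{hom-basis} through this identification to get the $\phi_{(A|B)}$, and reassemble over the disjoint union of the $M_n(\N|\Z_2)_{\la,\mu}$. The explicit check that $f_{T_{A|B}}$ coincides with $\phi_{(A|B)}$ of \eqref{tilde-phi} is a nice touch that the paper leaves implicit.
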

\begin{proof}
By \eqref{QqSchur-defn}, one can deduce that
$
\mc Q_q(n,r;R)=\bigoplus_{\la,\mu\in\La(n,r)}{\rm Hom}_{\HCR}(x_\mu\HCR,x_\la\HCR).
$
Then, by Lemma~\ref{hom-interc} and Proposition~\ref{hom-basis}, the set
$$
\{\phi_{(A|B)}\mid {\rm ro}(A+B)=\la,{\rm co}(A+B)=\mu\}
$$
is an $R$-basis for ${\rm Hom}_{\HCR}(x_\mu\HCR,x_\la\HCR)$ for each pair $\la,\mu\in\La(n,r)$.
Therefore, the set
$$
\{\phi_{(A|B)}\mid (A|B)\in M_n(\N|\Z_2)_r\}
$$
forms an $R$-basis for $\mc Q_q(n,r;R)$.
\end{proof}
By Theorem~\ref{QqSchur-basis}, we have the following base change property for $\mc Q_q(n,r;R)$.

\begin{cor}  Maintain the assumption on $R$ as above. Suppose that $R$  is an $\sA$-algebra via $\bsq\mapsto q$.
Then
$$
\mc Q_q(n,r;R)\cong \mc Q_\bsq(n,r)_R:=\mc Q_\bsq(n,r)\otimes_\sA R.
$$
\end{cor}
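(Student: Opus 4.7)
The plan is to combine the $R$-basis produced in Theorem~\ref{QqSchur-basis} with the base change property for the Hecke--Clifford superalgebra given in Corollary~\ref{HC-PBW2}(1) and conclude by a basis-to-basis argument.

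First I would promote the base change isomorphism $\HCR \cong \HC \otimes_\sA R$ to the level of permutation supermodules. Since $x_\lambda$ is defined by the same formula~\eqref{xla} over any ring, under that isomorphism $x_\lambda$ corresponds to $x_\lambda \otimes 1$, so the tensor space decomposes as
$$
\bigoplus_{\lambda\in\La(n,r)} x_\lambda\HCR \;\cong\; \bigg(\bigoplus_{\lambda\in\La(n,r)} x_\lambda\HC\bigg)\otimes_\sA R
$$
as right $\HCR$-supermodules. Lemma~\ref{ylaHC}(1) ensures each summand $x_\lambda\HC$ is $\sA$-free, and hence the whole tensor space is $\sA$-free of finite rank for each $r$.

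Next, I would invoke the canonical natural map
$$
\Phi\colon \mc Q_\bsq(n,r)\otimes_\sA R \longrightarrow \mc Q_q(n,r;R), \quad f\otimes a \longmapsto a\cdot(f\otimes_\sA\mathrm{id}_R),
$$
which is well defined and an $R$-algebra homomorphism for any finitely generated projective (indeed free) module. The key observation is that $\Phi$ sends the $\sA$-basis element $\phi_{(A|B)}^{\sA}\otimes 1$ to $\phi_{(A|B)}^{R}$: this is because the defining formula~\eqref{TAB} for $T_{A|B}$ is a polynomial expression in the generators $T_i, c_j$ with coefficients already lying in $\mathbb Z\subseteq\sA$ (the elements $c_{q,i,j}$ in~\eqref{cijq}, from which $c_{A|B}$ is assembled via~\eqref{cla-al} and~\eqref{cAB}, involve only integer powers of $\bsq$), so base change commutes with the construction of $T_{A|B}$, and the definition~\eqref{tilde-phi} of $\phi_{(A|B)}$ is itself compatible with tensoring.

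Finally, applying Theorem~\ref{QqSchur-basis} to both $\sA$ and $R$, both $\mc Q_\bsq(n,r)\otimes_\sA R$ and $\mc Q_q(n,r;R)$ are free $R$-modules with bases indexed by the same combinatorial set $M_n(\N|\Z_2)_r$, and $\Phi$ sends the first basis bijectively onto the second. Hence $\Phi$ is an $R$-module isomorphism, and therefore an $R$-algebra isomorphism. The only point requiring genuine verification is the basis-to-basis assertion in the preceding paragraph; everything else is a formal consequence of freeness and the base change property for $\HCR$.
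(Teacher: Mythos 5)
Your proposal is correct and follows exactly the route the paper intends: the corollary is stated as an immediate consequence of Theorem~\ref{QqSchur-basis}, the point being that the natural base-change map carries the $\sA$-basis $\{\phi_{(A|B)}\}$ onto the corresponding $R$-basis, since the defining data $T_{A|B}$ (via \eqref{cijq}, \eqref{cla-al}, \eqref{cAB}, \eqref{TAB}) are polynomial in $\bsq$ with integer coefficients. Your write-up simply makes explicit the details the paper leaves to the reader.
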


\section{Identification with the quotients of the quantum queer supergroup}

In this section, we shall show that the \qq-Schur superalgebra $\mc Q_\bsv(n,r)$ coincides with the quantum queer Schur superalgebra constructed in \cite{DW}.
In particular, they are homomorphic images of the quantum queer supergroup $U_\bsv(\mathfrak q_n)$.

Let $q=v^2$ for some $v\in R$.
Let $V(n|n)_R=V_0\oplus V_1$ be a free $R$-supermodule with basis $e_1,\ldots,e_n$ for $V_0$ and basis $e_{-1},\ldots,e_{-n}$ for $V_1$. With the fixed ordered basis $\{e_1,\ldots,e_n,e_{-1},\ldots,e_{-n}\}$, we often identify the $R$-algebra $\End_{R}(V(n|n)_R)$ of all $R$-linear maps on $V(n|n)_R$ with the $2n\times2n$ matrix algebra $M_{2n}(R)$ over $R$ and, hence, $\End_{R}(V(n|n)_R^{\otimes r})$ with $M_{2n}(R)^{\otimes r}$.

Let
$$\aligned
I(n|n)&=\{1,2,\ldots,n,-1,-2,\ldots,-n\},\\
\Innr&=\{\udi=(i_1,\ldots,i_r)\mid i_k\in I(n|n), 1\leq k\leq r\}.
\endaligned
$$
Then, $\fS_r$ acts on $\Innr$ by the place permutation
$$\udi s_k=(i_1,\ldots,i_{k-1},i_{k+1},i_k,i_{k+2},\ldots,i_r). $$
For $\udi=(i_1,\ldots,i_r)\in \Innr$, set
$
e_{\udi}=e_{i_1}\otimes e_{i_2}\otimes\cdots\otimes e_{i_r},
$
then the set $\{e_{\udi}\mid \udi\in \Innr\}$ forms a basis for $V(n|n)_R^{\otimes r}$.

Denote by $E_{i,j}$ for $i,j\in I(n|n)$ the standard elementary matrix with the $(i,j)$th entry being 1 and 0 elsewhere.
Then $\{E_{i,j}\mid i,j\in I(n|n)\}$ can be viewed as the standard basis for $\End_{R}(V(n|n)_R)$, that is, $E_{i,j}(e_k)=\delta_{j,k}e_i$. Following \cite{Ol}, we set
\begin{equation}\label{operator-OmegaTS}
\aligned
\Omega&=  \sum_{1\leq a\leq n}(E_{-a,a}-E_{a,-a}),\\
T&=  \sum_{i,j\in I(n|n)}(-1)^{\widehat{j}}E_{i,j}\otimes E_{j,i},\\
S&= v\sum_{i\leq j\in I(n|n)}S_{i,j}\otimes E_{i,j}\in\End_{R}
(V(n|n)_R^{\otimes 2}),
\endaligned
\end{equation}
where $\widehat{j}=\widehat{e}_j=0$ if $j>0$ and $\widehat{j}=\widehat{e}_j=1$ if $j<0$, and $S_{i,j}$ for $i\leq j$ are defined as follows:
\begin{equation}\label{S}
\aligned
S_{a,a}&=1+(v-1)(E_{a,a}+E_{-a,-a}),\quad 1\leq a\leq n,\\
S_{-a,-a}&=1+(v^{-1}-1)(E_{a,a}+E_{-a,-a}),\quad1\leq a\leq n, \\
S_{b,a}&=(v-v^{-1})(E_{a,b}+E_{-a,-b}),\quad 1\leq b<a\leq n, \\
S_{-b,-a}&=-(v -v^{-1})(E_{a,b}+E_{-a,-b}),\quad 1\leq a<b\leq n, \\
S_{-b,a}&=-(v -v^{-1}) (E_{-a,b}+E_{a,-b}),\quad 1\leq a, b\leq n.
\endaligned
\end{equation}

To endomorphisms $A\in\End_{R}(V(n|n)_R)$ and
$Z=\sum_{t}X_{t}\otimes Y_{t}\in\End_{R}(V(n|n)_R)^{\otimes
2}=\End_{R}(V(n|n)_R^{\otimes
2})$, we associate the following elements in $\End_{R}(V(n|n)_R^{\otimes
r})$:
\begin{align*}
A^{(k)}&={\rm id}^{\otimes (k-1)}\otimes A\otimes {\rm id}^{\otimes(r-k)},\qquad 1\leq k\leq r,\\
Z^{(j,k)}&=\sum_{t}X_{t}^{(j)}Y_{t}^{(k)}, \qquad 1\leq j\neq k\leq r.
\end{align*}
Let $\bar{S}=TS$. It follows from \cite[Theorems 5.2-5.3]{Ol} that there exists a {\it left} $\HCR$-supermodule structure on $V(n|n)^{\otimes r}_R$ given by
\begin{equation}\label{left-act}
\aligned
\Psi_r: \HCR&\longrightarrow{\rm End}_R(V(n|n)^{\otimes r}_R), \quad T_i\longmapsto\bar{S}^{(i,i+1)}, c_j\longmapsto\Omega^{(j)}
\endaligned
\end{equation}
for all $1\leq i\leq r-1,1\leq j\leq r$.
Here we remark that the even generators $T_1,\ldots,T_{r-1}$ for $\HCR$
are related to the even generators $t_1,\ldots,t_{r-1}$ in \cite{Ol} via $T_i=vt_i$ for $1\leq i\leq r-1$.
This above action has been explicitly worked out in \cite[Lemma 3.1]{WW} via \eqref{operator-OmegaTS}:

\begin{lem}\label{WW} The $\HCR$-supermodule structure on $V(n|n)^{\otimes r}_R$ is given by the following formulas: for $\udi=(i_1,\ldots,i_r)\in \Innr$ and $1\leq k\leq r-1$, $1\leq j\leq r$,
$$\aligned
c_j\cdot e_{\udi}&=(-1)^{\hat{e}_{i_1}+\cdots+\hat{e}_{i_{j}}}e_{i_1}\otimes\cdots\otimes e_{i_{j-1}}\otimes
e_{-i_j}\otimes e_{i_{j+1}}\otimes\cdots\otimes e_{i_r}.\\
T_k\cdot e_{\udi}&=\left\{
\begin{array}{ll}
v^2e_{\udi s_k}+(v^2-1)e_{\udi^-_k},&\text{ if }i_k=i_{k+1}\geq 1,\\
-e_{\udi s_k},&\text{ if }i_k=i_{k+1}\leq -1,\\
e_{\udi s_k},&\text{ if }i_k=-i_{k+1}\geq 1,\\
v^2e_{\udi s_k}+(v^2-1)e_{\udi},&\text{ if }i_k=-i_{k+1}\leq -1,\\
ve_{\udi s_k}+(v^2-1)e_{\udi^-_k}+(v^2-1)e_{\udi},&\text{ if }|i_k|<|i_{k+1}| \text{ and }i_{k+1}\geq 1,\\
(-1)^{\widehat{i}_k}ve_{\udi s_k},&\text{ if }|i_k|<|i_{k+1}| \text{ and }i_{k+1}\leq -1,\\
ve_{\udi s_k}+(-1)^{\widehat{i}_{k+1}}(v^2-1)e_{\udi^-_k},&\text{ if }|i_{k+1}|<|i_{k}| \text{ and }i_{k}\geq 1,\\
(-1)^{\widehat{i}_{k+1}}e_{\udi s_k}+(v^2-1)e_{\udi},&\text{ if }|i_{k+1}|<|i_k| \text{ and }i_{k}\leq-1,
\end{array}
\right.\\
\endaligned
$$
where $\udi^-_k=(i_1,\ldots,i_{k-1},-i_k,-i_{k+1},i_{k+2},\ldots,i_r).$
\end{lem}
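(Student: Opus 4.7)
The plan is to verify both sets of formulas by direct computation from the definition \eqref{left-act}, namely $\Psi_r(c_j)=\Omega^{(j)}$ and $\Psi_r(T_k)=\bar S^{(k,k+1)}=(TS)^{(k,k+1)}$, using the explicit expressions in \eqref{operator-OmegaTS}--\eqref{S}.

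First I would handle the odd generators. Since $\Omega=\sum_{a}(E_{-a,a}-E_{a,-a})$ is an odd endomorphism of $V(n|n)_R$, placing it in the $j$-th tensor factor produces a Koszul sign depending on the parities of the basis vectors in positions $1,\ldots,j-1$. Applying $\Omega^{(j)}$ to $e_{\udi}$ and observing that $\Omega(e_{i_j})=\operatorname{sgn}(i_j)\,e_{-i_j}$ up to a sign absorbed into the parity of $e_{i_j}$, a short computation yields the stated formula with the sign $(-1)^{\widehat{e}_{i_1}+\cdots+\widehat{e}_{i_j}}$.

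Next I would reduce the computation of $T_k=\bar S^{(k,k+1)}$ to a two-tensor computation: since $\bar S=TS$ acts only on factors $k$ and $k+1$, it suffices to evaluate $\bar S(e_i\otimes e_j)$ for every pair $(i,j)\in I(n|n)^2$ and then apply it in the appropriate slot (a super-swap by $T$ followed by the $R$-matrix action of $S$ requires no Koszul sign adjustment because $T_k$ itself is even). The operator $T=\sum_{i,j}(-1)^{\widehat{j}}E_{i,j}\otimes E_{j,i}$ satisfies $T(e_i\otimes e_j)=(-1)^{\widehat{i}\widehat{j}}e_j\otimes e_i$ on homogeneous vectors, and the summands of $S$ listed in \eqref{S} pick out exactly the following five regimes on $e_i\otimes e_j$: (a) $i=j$, with $S_{i,i}$ rescaling; (b) $i=-j$; (c) $|i|<|j|$ and $j>0$, governed by $S_{b,a}$; (d) $|i|<|j|$ and $j<0$, governed by $S_{-b,-a}$; (e) the off-diagonal pieces $S_{-b,a}$, which produce the cross terms. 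So my plan is to compute $S(e_i\otimes e_j)$ case-by-case, then apply $T$ and record the sign, and finally read off the eight cases of the lemma by translating back to the indexing $(i_k,i_{k+1})=(i,j)$.

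The main obstacle will be the careful bookkeeping of signs: each of the eight cases combines (i) the sign $(-1)^{\widehat{i}\widehat{j}}$ from the super-swap in $T$, (ii) the signs appearing in the $S_{i,j}$ that feed into $T$ via \eqref{S} (note the explicit minus signs in $S_{-b,-a}$ and $S_{-b,a}$), and (iii) in case $i_k=i_{k+1}$ or $i_k=-i_{k+1}$, an extra diagonal contribution from $S_{a,a}$ or $S_{-a,-a}$ that produces the $(v^2-1)e_{\udi}$ and $(v^2-1)e_{\udi^-_k}$ terms. A useful sanity check along the way is that the formulas must specialize correctly: setting $v=1$ should recover the Sergeev action of $\fS_r\ltimes\mc C_r$, and restricting to indices in $\{1,\ldots,n\}$ (resp.\ $\{-1,\ldots,-n\}$) should reproduce the standard (resp.\ sign-twisted) Hecke $R$-matrix action on the purely even (resp.\ purely odd) subspace. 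Once all eight cases are tabulated, comparison with Lemma \ref{WW} completes the proof; alternatively one may simply cite \cite[Lemma 3.1]{WW} as the authors do, since the computation is essentially identical to the one carried out there.
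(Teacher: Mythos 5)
Your plan is sound, but it is worth noting that the paper does not actually prove this lemma: it simply quotes the formulas from \cite[Lemma 3.1]{WW}, where the case-by-case evaluation of $\Omega^{(j)}$ and $\bar S^{(k,k+1)}$ that you outline is carried out. So your proposal amounts to reproducing the computation behind the citation, which is a legitimate (and more self-contained) route; your treatment of the odd generators is already essentially complete, since the Koszul sign $(-1)^{\widehat e_{i_1}+\cdots+\widehat e_{i_{j-1}}}$ combined with $\Omega(e_{i_j})=(-1)^{\widehat i_j}e_{-i_j}$ gives exactly the stated sign. Two small cautions on the even generators: since $\bar S=TS$ as a composition of operators, $S$ acts first and $T$ second (your phrase ``a super-swap by $T$ followed by the $R$-matrix action of $S$'' has the order reversed, though your subsequent bookkeeping list (i)--(iii) treats it correctly); and besides the signs you list, the summands of $S=v\sum_{i\leq j}S_{i,j}\otimes E_{i,j}$ with $E_{i,j}$ odd (the $S_{-b,a}$ pieces) contribute an extra Koszul sign $(-1)^{\widehat u}$ when applied to $e_u\otimes e_w$, depending on the convention used to identify $\End_R(V(n|n)_R)^{\otimes 2}$ with $\End_R(V(n|n)_R^{\otimes 2})$ -- this is precisely where the $(-1)^{\widehat i_k}$ and $(-1)^{\widehat i_{k+1}}$ factors in the lemma originate, so it must be tracked explicitly. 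With those points attended to, your eight-case tabulation (or, as you note, the direct citation of \cite[Lemma 3.1]{WW}, which is what the authors do) settles the statement; your sanity checks at $v=1$ and on the purely even and purely odd subspaces are a good safeguard.
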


For each $\udi\in \Innr$, define ${\rm wt}(\udi)=\la=(\la_1,\ldots,\la_n)\in\La(n,r)$ by setting
$$
\la_a=|\{k\mid a=|i_k|, 1\leq k\leq r\}|, \quad\forall 1\leq a\leq n.
$$
For each $\la\in\La(n,r)$, define $\udi_\la$ by
$$
\udi_\la=(\underbrace{-1,\ldots,-1}_{\la_1},\underbrace{-2,\ldots,-2}_{\la_2},\ldots,\underbrace{-n,\ldots,-n}_{\la_n}).
$$
Recall from \eqref{xla} the elements $y_\la$.
\begin{cor}\label{tensorspace}
The following holds as left $\HCR$-supermodules:
$$
V(n|n)_R^{\otimes r}\cong \bigoplus_{\la\in\La(n,r)}\HCR y_\la.
$$
\end{cor}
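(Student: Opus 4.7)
The plan is to decompose $V(n|n)_R^{\otimes r}$ by weight and then identify each weight component with the matching summand $\HCR y_\la$ via a common cyclic quotient of $\HCR$.

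First, from Lemma~\ref{WW} the actions of both $c_j$ and $T_k$ preserve the weight $\mathrm{wt}(\udi)\in\La(n,r)$: they only permute adjacent positions and/or flip signs without altering the multiset of absolute values. Setting $V_\la := \mathrm{span}_R\{e_{\udi}:\mathrm{wt}(\udi)=\la\}$ gives an $\HCR$-module decomposition $V(n|n)_R^{\otimes r}=\bigoplus_{\la\in\La(n,r)}V_\la$, reducing the task to proving $V_\la\cong \HCR y_\la$ for each $\la$.

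The key observation at the distinguished vector $e_{\udi_\la}$ is that, for each $s_k\in\mf S_\la$, the entries $(\udi_\la)_k=(\udi_\la)_{k+1}=-a$ place us in the ``$i_k=i_{k+1}\leq -1$'' case of Lemma~\ref{WW}, giving $T_k\cdot e_{\udi_\la}=-e_{\udi_\la s_k}=-e_{\udi_\la}$ and hence $(T_{s_k}+1)e_{\udi_\la}=0$. Since the parallel identity $(T_{s_k}+1)y_\la=0$ is standard in $\HR$, setting $L_\la:=\sum_{s_k\in\mf S_\la}\HCR(T_{s_k}+1)$ makes both maps $\phi_\la:\HCR\to V_\la$, $h\mapsto h\cdot e_{\udi_\la}$, and $f_\la:\HCR\to\HCR y_\la$, $h\mapsto hy_\la$, factor through $\HCR/L_\la$.

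The final step is rank-matching. $V_\la$ is $R$-free of rank $|\{\udi:\mathrm{wt}(\udi)=\la\}| = 2^r|\mc D_\la|$; by Remark~\ref{tau twist} and~\eqref{iota-twist}, $(\HCR y_\la)^\tau\cong x_\la\HCR$ is free of rank $2^r|\mc D_\la|$ by Lemma~\ref{ylaHC}(1), so $\HCR y_\la$ has the matching rank. To bound $\HCR/L_\la$ from above I would use the basis $\{c^\al T_w\}$ of $\HCR$ (Lemma~\ref{stdbs}) and the unique factorization $w=d'v$ with $d'\in\mc D_\la^{-1}$, $v\in\mf S_\la$, $T_w=T_{d'}T_v$: the relation $aT_{s_k}\equiv -a\pmod{L_\la}$ for any $a\in\HCR$ and $s_k\in\mf S_\la$ reduces $T_v$ inductively, yielding $c^\al T_w\equiv (-1)^{\ell(v)}c^\al T_{d'}\pmod{L_\la}$; hence $\{c^\al T_{d^{-1}}+L_\la:d\in\mc D_\la,\al\in\Z_2^r\}$ spans $\HCR/L_\la$ with exactly $2^r|\mc D_\la|$ cosets. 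Cyclic generation $V_\la=\HCR\cdot e_{\udi_\la}$ can then be verified by combining Clifford sign flips $c^\al e_{\udi_\la}$ with adjacent-absolute-value swaps: the ``$|i_k|<|i_{k+1}|$, $i_{k+1}\leq -1$'' case of Lemma~\ref{WW} gives $e_{\udi s_k}=-v^{-1}T_k e_{\udi}$ (using that $v=\bsv$ is invertible in $\mcZ$), while the ``$|i_{k+1}|<|i_k|$, $i_k\leq -1$'' case gives $e_{\udi s_k}=(v^2-1)e_{\udi}-T_k e_{\udi}$. Over the domain $\mcZ$, a size-$N$ spanning set of a module that surjects onto a free rank-$N$ module is automatically a basis, which forces both induced maps $\bar f_\la,\bar\phi_\la:\HCR/L_\la\to\HCR y_\la, V_\la$ to be isomorphisms; assembling these gives $V_\la\cong\HCR y_\la$ for each $\la$, and summing over $\la$ yields the corollary. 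The main obstacle I expect is the rank bound on $\HCR/L_\la$, which requires a careful induction in the right-multiplication reduction, along with the case-by-case verification of cyclic generation.
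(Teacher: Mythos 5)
Your proposal is correct, but it identifies the weight space with $\HCR y_\la$ by a genuinely different mechanism than the paper. Both arguments start from the same weight decomposition $V(n|n)_R^{\otimes r}=\bigoplus_\la V_\la$ extracted from Lemma~\ref{WW}. The paper then works through the Hecke subalgebra: it shows $\mc H_{r,R}e_{\udi_\la}$ has basis $\{e_{\udi_\la d}\mid d\in\mc D_\la\}$, quotes \cite[Lemma 2.1]{Du} for the $\HR$-module isomorphism $\mc H_{r,R}e_{\udi_\la}\cong\mc H_{r,R}y_\la$, and uses Frobenius reciprocity plus the basis $\{c^\al\cdot e_{\udi_\la d}\}$ of $V_\la$ to see that induction to $\HCR$ is bijective. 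You instead exhibit both $V_\la$ and $\HCR y_\la$ as quotients of the cyclic module $\HCR/L_\la$ with $L_\la=\sum_{s_k\in\mf S_\la}\HCR(T_{s_k}+1)$, bound $\HCR/L_\la$ above by $2^r|\mc D_\la|$ generators via the right-handed reduction $aT_{s_k}\equiv -a$, and conclude by rank matching; in effect you reprove the content of \cite[Lemma 2.1]{Du}, lifted to the Hecke--Clifford level, rather than citing it. Your route is self-contained and arguably cleaner in that a single annihilator computation handles both sides at once; the paper's is shorter given the citation and isolates the purely even (Hecke-algebra) part of the argument. Two small points: your restriction to $\mcZ$ in the final step is unnecessary, since a surjection from a module with $N$ generators onto a free module of rank $N$ is an isomorphism over any commutative ring (Cayley--Hamilton), so the argument works directly over any $R$ in which $\bsv$ is invertible, which is the standing hypothesis of \S6; and for the cyclic-generation step it is cleaner to first generate all $e_{\udi_\la d}$, $d\in\mc D_\la$, inside the all-negative sector (where the signs $(-1)^{\widehat{i}_k}$ in Lemma~\ref{WW} are constant) and only then apply the $c^\al$, exactly as the basis $\{c^\al\cdot e_{\udi_\la d}\}$ in the paper suggests.
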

\begin{proof} For each $\la\in\La(n,r)$,
let $(V(n|n)_R^{\otimes r})_\la$ be the $R$-submodule of $V(n|n)_R^{\otimes r}$ spanned by the elements $e_{\udi}$ with $\udi\in \Innr$ and  ${\rm wt}(\udi)=\la$.
Then, by Lemma~\ref{WW}, $(V(n|n)_R^{\otimes r})_\la$ is stable under the action of $\HCR$ and, moreover, $V(n|n)_R^{\otimes r}$ can
be decomposed as
\begin{equation}\label{decompVr}
V(n|n)^{\otimes r}_R=\bigoplus_{\la\in\La(n,r)}(V(n|n)_R^{\otimes r})_\la.
\end{equation}
Clearly $e_{\udi_\la}\in (V(n|n)_R^{\otimes r})_\la$.
For each $w\in\mf S_r$, we have $\udi_\la w=(i_{w(1)},\ldots,i_{w(r)})$, where we write $\udi_\la=(i_1,\ldots,i_r)$.
Then we can easily deduce that
$$
\{e_{\udi}\mid \udi\in \Innr,{\rm wt}(\udi)=\la, i_k\leq -1,1\leq k\leq r\}=\{e_{\udi_\la w}\mid w\in\mf S_r\}=
\{e_{\udi_\la d}\mid d\in\mc D_{\la}\}.
$$
Moreover, by Lemma~\ref{WW}, one can deduce that
$
\mc H_{r,R}e_{\udi_\la}=R\text{-span}\{e_{\udi_\la d}\mid d\in\mc D_{\la}\}.
$
Meanwhile by Frobenius reciprocity there exists an $\HCR$-homomorphism
\begin{equation}\label{wt-space}
\HCR \otimes_{\mc H_{r,R}}\mc H_{r,R}e_{\udi_\la}\rightarrow \HCR e_{\udi_\la}=(V(n|n)_R^{\otimes r})_\la,\quad c^{\al}\otimes e_{\udi_\la d}\mapsto c^\al \cdot e_{\udi_\la d}
\end{equation}
which is bijective due to the fact that $\{c^\al \cdot e_{\udi_\la d}\mid d\in\mc D_{\la},\al\in\Z_2^r\}$ is a basis for $(V(n|n)_R^{\otimes r})_\la$.
Observe that the tensor product $V(0|n)_R^{\otimes r}$ of odd $R$-submodule $V(0|n)_R$ of $V(n|n)_R$ admits a basis $\{e_{\udi}\mid \udi\in \Innr, i_k\leq -1,1\leq k\leq r\}$, which can be identified with $\bigoplus_{\la\in\La(n,r)}\mc H_{r,R} e_{\udi_\la}$.
Then, by Lemma~\ref{WW} and \cite[Lemma~2.1]{Du}, there is an $\sH_{r,R}$-module isomorphism
\begin{equation}\label{yla}
\mc H_{r,R} e_{\udi_\la}\cong \mc H_{r,R} y_\la.
\end{equation}
Taking a direct sum over $\La(n,r)$ and induction to $\HCR$ and noting \eqref{decompVr} and \eqref{wt-space} gives the required isomorphism.
\end{proof}

Recall the anti-involution $\tau$ defined in~Lemma \ref{anti-inv}(3) and the twist functor in Remark~\ref{tau twist}.
In particular, $(V(n|n)^{\otimes r}_R)^\tau$ affords a right $\HCR$-supermodule. By \eqref{iota-twist}, we immediately have  the following.
\begin{cor}\label{tensorspace2}
There is an isomorphism of right $\HCR$-supermodules:
$$
(V(n|n)_R^{\otimes r})^\tau\cong \bigoplus_{\la\in\La(n,r)}x_\la\HCR,
$$
which induces an algebra isomorphism
\begin{equation*}
{\rm End}_{\HCR}(V(n|n)_R^{\otimes r})\cong {\rm End}_{\HCR}\big((V(n|n)_R^{\otimes r})^\tau\big)\cong{\rm End}_{\HCR}\bigg(\bigoplus_{\la\in\La(n,r)}x_\la\HCR\bigg)= \mc Q_{v^2}(n,r;R).
\end{equation*}
\end{cor}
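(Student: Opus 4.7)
The plan is to obtain the first isomorphism by applying the twist functor $(-)^{\tau}$ from Remark~\ref{tau twist} to the left $\HCR$-supermodule isomorphism established in Corollary~\ref{tensorspace}, and then to deduce the second (algebra) isomorphism from the observation that twisting by an anti-involution identifies left endomorphisms with right endomorphisms compatibly with composition.

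First, by Corollary~\ref{tensorspace} there is an isomorphism of left $\HCR$-supermodules
\[
V(n|n)_R^{\otimes r}\;\cong\;\bigoplus_{\la\in\La(n,r)}\HCR\,y_\la.
\]
Since the functor $(-)^{\tau}$ is additive (it is the identity on underlying $R$-supermodules and only re-interprets the action via the anti-involution $\tau$), applying it yields an isomorphism of right $\HCR$-supermodules
\[
(V(n|n)_R^{\otimes r})^{\tau}\;\cong\;\bigoplus_{\la\in\La(n,r)}(\HCR\,y_\la)^{\tau}.
\]
Now invoke \eqref{iota-twist} from Remark~\ref{tau twist}, which gives $(\HCR y_\la)^{\tau}\cong x_\la\HCR$ for each $\la\in\La(n,r)$. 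Summing over $\la$ produces the first isomorphism asserted in the corollary.

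For the algebra isomorphism, I would argue as follows. If $M$ is a left $\HCR$-supermodule and $\phi\in\End_{\HCR}(M)$, then viewing $\phi$ as an $R$-linear map on $M^{\tau}=M$ and using the defining formula $m\cdot h=\tau(h)m$, one checks directly that
\[
\phi(m\cdot h)=\phi(\tau(h)m)=\tau(h)\phi(m)=\phi(m)\cdot h,
\]
so $\phi$ is a right $\HCR$-supermodule endomorphism of $M^{\tau}$. Conversely any right endomorphism of $M^{\tau}$ is a left endomorphism of $M$. Since the composition of $R$-linear maps is unchanged by the re-interpretation of the action, this bijection is an $R$-algebra isomorphism $\End_{\HCR}(M)\cong\End_{\HCR}(M^{\tau})$. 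Applying this with $M=V(n|n)_R^{\otimes r}$ and combining with the first isomorphism together with the defining equation~\eqref{QqSchur-defn} of $\mc Q_{v^2}(n,r;R)$ yields the desired chain.

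I do not expect any serious obstacle here: all the technical content (the module identification $V(n|n)_R^{\otimes r}\cong\bigoplus\HCR y_\la$, the twist identity $(\HCR y_\la)^{\tau}\cong x_\la\HCR$, and the definition of the \qq-Schur superalgebra) has already been set up. The only minor point requiring care is the compatibility check that twisting by the anti-involution $\tau$ preserves composition (so produces an algebra, not anti-algebra, isomorphism of endomorphism rings), together with the bookkeeping of the $\Z_2$-grading, both of which are routine because $\tau$ is an even anti-involution.
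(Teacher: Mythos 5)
Your proposal is correct and follows essentially the same route as the paper: twist the left-module decomposition of Corollary~\ref{tensorspace} by $\tau$ and invoke \eqref{iota-twist} to get $(\HCR y_\la)^\tau\cong x_\la\HCR$, the endomorphism-algebra identification then being immediate. Your explicit check that $\End_{\HCR}(M)\cong\End_{\HCR}(M^\tau)$ is an algebra (not anti-algebra) isomorphism is exactly the routine verification the paper leaves implicit.
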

Hence, if $R=\sZ=\mathbb Z[\bsv,\bsv^{-1}]$, the endomorphism superalgebra ${\rm End}_{\HCR}(V(n|n)_R^{\otimes r})$ is the quantum queer Schur superalgebra $\mc Q_\bsv(n,r)$ considered in \cite{DW}.\footnote{In \cite{DW}, we used the indeterminate $q$ instead of $\bsv$ and the notation $\mc Q_q(n,r)$ denotes the queer $q$-Schur superalgebra over the rational function field $\mathbb C(q)$.}  The isomorphism above shows that our notation is consistent as, by \eqref{v-Schur},  we used the same notation to denote the right hand side $\mc Q_{\bsv^2}(n,r;\sZ)$;
see \eqref{v-Schur}.

Olshanski \cite{Ol} introduced the quantum deformation $U_\bsv(\mf q_n)$ over $\C(\bsv)$ of the universal enveloping algebra of
the queer Lie superalgebra $\mf q(n)$ and defined a superalgebra homomorphism
$$\Phi_r: U_\bsv(\mf q_n)\longrightarrow {\rm End}_{\C(\bsv)}(V(n|n)^{\otimes r}_{\C(\bsv)}).$$
We refer the reader to \cite{Ol} for the details of the definitions of $U_\bsv(\mf q(n))$ and $\Phi_r$.
The following result is known as the double centraliser property and forms the first part of the quantum analog of Schur-Weyl--Sergeev duality for $U_\bsv(\mf q_n)$ and $\mc H^\sfc_{r,\C(\bsv)}$.
Recall also the algebra homomorphism $\Psi_r$ defined in \eqref{left-act}.

\begin{prop}[{\cite[Theorem 5.3]{Ol}\label{Ol}}]\label{Olthm}
The algebras $\Phi_r(U_\bsv(\mf q_n))$ and $\Psi_r(\mc H^\sfc_{r,\C(\bsv)})$ form mutual centralisers in ${\rm End}_{\C(\bsv)}(V(n|n)^{\otimes r}_{\C(\bsv)})$. In particular, $\Phi_r$ induces an algebra epimorphism
$$\Phi_r:U_\bsv(\mf q_n)\longrightarrow {\rm End}_{\sH^\sfc_{r,\C(\bsv)}}(V(n|n)^{\otimes r}_{\C(\bsv)})=\vQnr_{\C(\bsv)}.$$
\end{prop}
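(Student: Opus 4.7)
The plan is to establish mutual centralisation by reducing to the classical Sergeev duality at $\bsv=1$ and then transporting it back via a semisimplicity/dimension argument. First I would verify the easy inclusion: namely, that $\Phi_r(U_\bsv(\mf q_n))$ commutes with $\Psi_r(\mc H^\sfc_{r,\C(\bsv)})$. This reduces to showing that the matrix $\bar S = TS$ implementing the action of $T_i$ satisfies the quantum Yang--Baxter equation together with the intertwining relations with the coproduct of $U_\bsv(\mf q_n)$, and that the Clifford generator $\Omega$ commutes with the images of the queer generators. Both statements can be checked directly on the natural representation using the explicit formulas for $S$, $T$ and $\Omega$ in \eqref{operator-OmegaTS}--\eqref{S}, which is essentially the content of the $R$-matrix construction of $U_\bsv(\mf q_n)$ going back to Olshanski.

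Next I would fix a generic value of $\bsv$ and establish the harder inclusion, i.e., that $\Phi_r(U_\bsv(\mf q_n))$ equals the centraliser of $\Psi_r(\mc H^\sfc_{r,\C(\bsv)})$. The strategy is a flat deformation argument. At the classical point $\bsv=1$, the Hecke--Clifford algebra degenerates to the Sergeev algebra $\C\fS_r\ltimes\mc C_r$ and $U_\bsv(\mf q_n)$ degenerates to $U(\mf q(n))$. Sergeev's classical Schur--Weyl--Sergeev duality provides the mutual centralising property in this limit, and moreover yields an explicit multiplicity-free decomposition of $V(n|n)^{\otimes r}$ as a bimodule whose isotypic components are indexed by strict partitions of $r$ with at most $n$ parts. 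In particular the dimensions of the endomorphism algebra of $V(n|n)^{\otimes r}$ as an $\HC_r$-module and the dimension of the image of $U(\mf q(n))$ agree in the classical case.

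I would then invoke semisimplicity of $\mc H^\sfc_{r,\C(\bsv)}$ over $\C(\bsv)$ (which follows from the generic semisimplicity of the Hecke--Clifford algebra, e.g.\ via \cite{JN,BK}) together with the fact that the tensor space $V(n|n)_{\C(\bsv)}^{\otimes r}$ has the same $\C(\bsv)$-dimension $(2n)^r$ as its classical counterpart. Semisimplicity decomposes $V(n|n)^{\otimes r}_{\C(\bsv)}$ into isotypic components for the $\HC_r$-action, and comparison of the bimodule decomposition with the classical one (via a standard specialisation argument, using the bases of Lemma~\ref{stdbs} and the base change Corollary~\ref{HC-PBW2}) forces the two centralisers to have matching dimensions. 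Combined with the commuting property and the already-established inclusion $\Phi_r(U_\bsv(\mf q_n))\subseteq\End_{\HC^\sfc_{r,\C(\bsv)}}(V(n|n)^{\otimes r}_{\C(\bsv)})$, this produces the required equality.

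The epimorphism statement is then a formal consequence: by Corollary~\ref{tensorspace2} we have $\End_{\HC^\sfc_{r,\C(\bsv)}}(V(n|n)^{\otimes r}_{\C(\bsv)})\cong\vQnr_{\C(\bsv)}$, and the double centraliser asserts that the image of $\Phi_r$ fills out this endomorphism algebra. The main obstacle I foresee is controlling the specialisation $\bsv\to 1$ rigorously: one must produce an integral form of the bimodule $V(n|n)^{\otimes r}$ over a suitable localisation of $\mcZ$, verify flatness of both $U_\bsv(\mf q_n)$ and $\HC_r$ over this base, and ensure that the dimensions of the centralisers are upper semicontinuous in $\bsv$ so that classical equality forces generic equality; the rest of the argument is essentially formal once that specialisation framework is in place.
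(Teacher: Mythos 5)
The paper does not prove this proposition at all: it is imported verbatim as \cite[Theorem 5.3]{Ol}, so there is no internal argument to compare yours against. Judged on its own terms, your outline is a legitimate reconstruction of the standard deformation proof of such dualities (and is close in spirit to Olshanski's original argument). The logical skeleton is sound: the commuting of the two actions gives $\Phi_r(U_\bsv(\mf q_n))\subseteq \End_{\HC_{r,\C(\bsv)}}(V(n|n)^{\otimes r})$; the centraliser is cut out by linear equations defined over a lattice, so its generic dimension is at most its dimension at $\bsv=1$, where the Hecke--Clifford algebra degenerates to the Sergeev algebra $\C\fS_r\ltimes\mc C_r$; classical Sergeev duality identifies that special-fibre centraliser with $\Phi_1(U(\mf q(n)))$; and the image dimension is lower semicontinuous, so $\dim\Phi_r(U_\bsv(\mf q_n))\geq\dim\Phi_1(U(\mf q(n)))$. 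Chaining these forces equality. The one genuine piece of work you have identified but not supplied is the last semicontinuity step: you need an integral form of $U_\bsv(\mf q_n)$ (or at least of enough elements of it) over a local ring at $\bsv=1$ whose image in $\End(V(n|n)^{\otimes r})$ specialises \emph{onto} $\Phi_1(U(\mf q(n)))$. This is not automatic from the generators alone, since the Cartan-type generators of $U(\mf q(n))$ are recovered only from divided differences such as $(K_i-1)/(\bsv-1)$, and one must check these lie in the chosen lattice and act integrally on the tensor space. A minor caveat: the classical bimodule decomposition is multiplicity-free only in the super sense (the type $\tt Q$ isotypic components contribute via a reduced tensor product, cf.\ the $\delta(\xi)$ dichotomy of Proposition \ref{IrrHC}), but this does not affect the dimension count. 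Since the paper treats the statement as a black box, the cleanest course is to do likewise and cite \cite{Ol}; if you do want a self-contained proof, the specialisation framework you flag at the end is precisely the part that must be written out.
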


Thus,
by \cite[Theorem 9.2]{DW}, the \qq-Schur superalgebra $\vQnr_{\C(\bsv)}$
has a presentation with even generators $K_i^{\pm1},E_j,F_j$ and odd generators $K_{\bar i},E_{\bar j},F_{\bar j}$, for $1\leq i\leq n$ and $1\leq j\leq n-1$, subject to the relations (QQ1)--(QQ9) given in \cite{DW}.

\begin{rem} \label{ForMori2}
It would be natural to expect that one may use the integral basis $\{\phi_{(A|B)}\}_{(A|B)}$ given in Theorem \ref{QqSchur-basis} to describe the images of the generators $K_i^{\pm1},E_j,F_j,K_{\bar i},E_{\bar j},F_{\bar j}$ and thus, to obtain a realisation of  $U_\bsv(\mf q_n)$ similar to the ones given in \cite{BLM,DG}.
\end{rem}

\section{Irreducible $\vQr_\K$-supermodules}
In this section, we shall give a construction of all irreducible $\vQr_\K$-supermodules over certain field extension $\K$ of $\mcZ=\mathbb Z[\bsv,\bsv^{-1}]$. But we first look at a few general facts for the superalgebra $\qQnr$ over $\sA=\mathbb Z[\bsq]$.

Let $\omega=(1^r)$. In the case $n\geq r$,  we can view $\omega$  as an element in $\La(n,r)$ and define
$$
e_\omega=\phi_{(A_\omega|B_\omega)}\in \qQnr,
$$
where $A_\omega={\rm diag}(\underbrace{1,\ldots,1}_{r},0,\ldots,0)$ and $B_\omega=0$.
Then, by \eqref{TAB} and \eqref{tilde-phi},
\begin{equation}\label{eomega}
e_\omega(x_\la h)=\delta_{\omega,\la}x_\la h,\quad e^2_\omega=e_\omega
\end{equation}
for $\la\in\La(n,r),h\in\mc H^c_r$.

Recall from \cite[Theorem 2.24]{DJ2} that the $q$-Schur algebra $\qSnr$ is defined as
\begin{equation}\label{qSchur}
\qSnr:={\rm End}_{\mc H_r}\bigg(\bigoplus_{\la\in\La(n,r)}x_\la\mc H_r\bigg).
\end{equation}
Let
$$\sT(n,r)=\bigoplus_{\la\in\La(n,r)}x_\la\mc H_r\quad\text{and}\quad
\sT^\sfc(n,r)=\bigoplus_{\la\in\La(n,r)}x_\la\mc H_r^\sfc.$$
Since $\sT^\sfc(n,r)=\sT(n,r)\otimes_{\mc H_r}\HC$ (see \eqref{ind}), there is a natural algebra embedding
\begin{align*}
\epsilon: \qSnr\longrightarrow \qQnr, f\longmapsto f\otimes 1.
\end{align*}
Equivalently, the $q$-Schur algebra $\qSnr$ can be identified as a subalgebra of $\qQnr$ via the following way
\begin{equation}\label{embedding}
\qSnr=\{\phi\in \qQnr\mid \phi(x_\la)\in\mc H_r, \forall \la\in\La(n,r)\}.
\end{equation}
On the other hand, if $n\geq r$, the evaluation map gives algebra isomorphisms
$$e_\omega \qQnr e_\omega\cong \mc H^\sfc_{r}\quad\text{and}\quad
e_\omega \qSnr e_\omega\cong \mc H_{r}.$$
We will identify them in the sequel.  In particular, $\qQnr e_\omega$ is a $\qQnr$-$\HC$-bisupermodule.

\begin{lem}\label{QqSchur-eomega}
If $n\geq r$, then there is a $\qQnr$-$\HC$-bisupermodule isomorphism
$$\qQnr e_\omega\cong\sT^\sfc(n,r).$$
Moreover, restriction gives an $\qSnr$-$\HC$-bisupermodule isomorphism
$$
\qQnr e_\omega\cong \qSnr e_\omega\otimes_{\mc H_r}\HC.
$$
\end{lem}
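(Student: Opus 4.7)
The plan is to construct the first isomorphism via an evaluation map, then deduce the second from the parallel construction on the $q$-Schur algebra side. Since $n \geq r$, the composition $\omega = (1^r)$ lies in $\La(n,r)$ so that $e_\omega$ and $x_\omega = T_e = 1$ are well-defined. I would first define
\begin{equation*}
\Theta: \qQnr e_\omega \longrightarrow \sT^\sfc(n,r), \qquad \phi e_\omega \longmapsto \phi(x_\omega),
\end{equation*}
and check that it is a bijection. For injectivity, if $\phi(x_\omega) = 0$ then by \eqref{eomega} we have $(\phi e_\omega)(x_\mu h) = \delta_{\mu,\omega}\phi(x_\omega)h = 0$ for all $\mu \in \La(n,r)$ and $h \in \HC$. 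For surjectivity, given $t \in \sT^\sfc(n,r)$, the map $\phi_t$ defined by $\phi_t(x_\mu h) = \delta_{\mu,\omega} th$ is $\HC$-linear (it is left multiplication by $t$ on $x_\omega \HC = \HC$ and zero elsewhere), satisfies $\phi_t e_\omega = \phi_t$, and has $\Theta(\phi_t) = t$.

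Next, I would verify that $\Theta$ respects both actions. Left $\qQnr$-linearity is immediate from composition, since $\Theta(\psi \phi e_\omega) = \psi(\phi(x_\omega))$. For right $\HC$-linearity, recall that the right $\HC$-action on $\qQnr e_\omega$ comes from the identification $e_\omega \qQnr e_\omega \cong \HC$, under which $h \in \HC$ corresponds to the endomorphism $\iota_h$ given by $\iota_h(x_\mu k) = \delta_{\mu,\omega} hk$; in particular $\iota_h(x_\omega) = h$. Then
\begin{equation*}
\Theta((\phi e_\omega) \cdot h) = (\phi \iota_h)(x_\omega) = \phi(\iota_h(x_\omega)) = \phi(h) = \phi(x_\omega) h = \Theta(\phi e_\omega) h,
\end{equation*}
where the penultimate equality uses right $\HC$-linearity of $\phi$ together with $x_\omega = 1$.

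For the second assertion, I would observe that the same evaluation construction applies verbatim to the $q$-Schur algebra, yielding a $\qSnr$-$\mc H_r$-bimodule isomorphism $\Theta': \qSnr e_\omega \to \sT(n,r)$. Tensoring on the right with $\HC$ over $\mc H_r$ and invoking the identification $\sT(n,r) \otimes_{\mc H_r} \HC = \sT^\sfc(n,r)$ from \eqref{ind} produces the chain
\begin{equation*}
\qSnr e_\omega \otimes_{\mc H_r} \HC \xrightarrow{\Theta' \otimes 1} \sT^\sfc(n,r) \xrightarrow{\Theta^{-1}} \qQnr e_\omega
\end{equation*}
of $\qSnr$-$\HC$-bimodule isomorphisms. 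That this composition genuinely is the restriction of the first isomorphism follows from $\epsilon(e_\omega) = e_\omega$ and the fact that $\epsilon(\phi e_\omega)$ evaluates at $x_\omega$ exactly to $\phi(x_\omega) \in \sT(n,r) \subseteq \sT^\sfc(n,r)$.

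The main obstacle I anticipate is bookkeeping the right $\HC$-action on $\qQnr e_\omega$: it is defined only abstractly through $e_\omega \qQnr e_\omega \cong \HC$, so one must explicitly identify it with ordinary right multiplication by $\HC$ on $\sT^\sfc(n,r)$ via the evaluation at $x_\omega$. Once that compatibility is pinned down, both isomorphisms become essentially formal consequences of the standard $eAe$-module description of $Ae$.
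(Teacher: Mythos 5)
Your proposal is correct and follows essentially the same route as the paper: both isomorphisms come from evaluation at $x_\omega=1$ (identifying $\qQnr e_\omega$ with $\Hom_{\HC}(x_\omega\HC,\sT^\sfc(n,r))\cong\sT^\sfc(n,r)$, and likewise on the $q$-Schur algebra side), followed by the identification $\sT^\sfc(n,r)=\sT(n,r)\otimes_{\mc H_r}\HC$. Your version merely spells out the bijectivity and the compatibility with the right $e_\omega\qQnr e_\omega\cong\HC$ action, which the paper leaves implicit.
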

\begin{proof}
By \eqref{eomega} and \eqref{embedding},  $e_\omega\in \qSnr$ and  $x_\omega=1$. Hence, the evaluation map gives an $\qSnr$-$\sH_r$-bimodule isomorphism
$$\qSnr e_\omega={\rm Hom}_{\mc H_r}(x_\omega\mc H_r,\sT(n,r))\cong \sT(n,r),$$
and a $\qQnr$-$\HC$-bisupermodule isomorphism
$$\qQnr e_\omega={\rm Hom}_{\HC}(x_\omega\HC,\sT^\sfc(n,r))\cong \sT^\sfc(n,r).$$
Thus, the restriction gives an $\qSnr$-$\HC$-bisupermodule isomorphism
\begin{align*}
\qQnr e_\omega\cong \sT(n,r)\otimes_{\mc H_r}\HC\cong \qSnr e_\omega\otimes_{\mc H_r}\HC.
\end{align*}
This proves the lemma.
\end{proof}

Assume that $\F$ is a field which is a $\mcZ$-algebra such that the image of $\bsq$
is not a root of unity. Thus, both $\mc H_{r,\F}$ and $\qSnr_{\F}=\qSnr\otimes_\mcZ\F$ are semisimple $\F$-algebras.
The following result generalizes the category equivalence between $\qSnr_\F$-mod and $\mc H_{r,\F}$-mod in the case $n\geq r$
and the proof is somewhat standard, see, e.g., \cite[Theorem 4.1.3]{DDF}.

\begin{prop}\label{morita-equiv}
Assume $n\geq r$. The superalgebras $\qQnr_\F$ and $\mc H^\sfc_{r,\F}$ are Morita equivalent.
\end{prop}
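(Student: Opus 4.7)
The plan is to verify that the right $\mc H^\sfc_{r,\F}$-supermodule $\sT^\sfc(n,r)=\bigoplus_{\la\in\La(n,r)}x_\la\mc H^\sfc_{r,\F}$ is a progenerator; Morita's theorem will then yield the equivalence since $\qQnr_\F=\End_{\mc H^\sfc_{r,\F}}(\sT^\sfc(n,r))$ holds by definition of the \qq-Schur superalgebra.

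First I would observe that $\sT^\sfc(n,r)$ is finite-dimensional over $\F$, hence finitely generated over $\mc H^\sfc_{r,\F}$, and that the generator property is immediate from the hypothesis $n\ge r$: the composition $\omega=(1^r)$ belongs to $\La(n,r)$, so $x_\omega=1$ and the regular supermodule $x_\omega\mc H^\sfc_{r,\F}=\mc H^\sfc_{r,\F}$ appears as a direct summand of $\sT^\sfc(n,r)$. Equivalently, this says that the idempotent $e_\omega$ from~\eqref{eomega} is full, i.e.\ $\qQnr_\F e_\omega\qQnr_\F=\qQnr_\F$.

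Next I would establish projectivity of each summand $x_\la\mc H^\sfc_{r,\F}$. Since $v$ is not a root of unity in $\F$, neither is $q=v^2$, so the classical quantum factorial $\sum_{w\in\mf S_\la}q^{\ell(w)}=\prod_i[\la_i]_q!$ is invertible. The standard computation (using $x_\la T_{s_i}=qx_\la$ for all $s_i\in\mf S_\la$) yields $x_\la^2=\bigl(\sum_{w\in\mf S_\la}q^{\ell(w)}\bigr)x_\la$, so $\bigl(\sum_{w\in\mf S_\la}q^{\ell(w)}\bigr)^{-1}x_\la$ is an idempotent of $\mc H_{r,\F}$; hence $x_\la\mc H_{r,\F}$ is a direct summand of $\mc H_{r,\F}$ and therefore a projective right $\mc H_{r,\F}$-module. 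By Corollary~\ref{HC-PBW2}(2), $\mc H^\sfc_{r,\F}$ is free over $\mc H_{r,\F}$, so the induced supermodule $x_\la\mc H^\sfc_{r,\F}\cong x_\la\mc H_{r,\F}\otimes_{\mc H_{r,\F}}\mc H^\sfc_{r,\F}$ from~\eqref{ind} is a direct summand of a finitely generated free right $\mc H^\sfc_{r,\F}$-supermodule, hence projective. Taking the direct sum over $\La(n,r)$ shows $\sT^\sfc(n,r)$ itself is projective.

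With the three conditions (finitely generated, projective, generator) verified for $\sT^\sfc(n,r)$, Morita's theorem---whose super version applies verbatim since every construction above respects the $\Z_2$-grading---produces the desired equivalence between $\qQnr_\F$ and $\mc H^\sfc_{r,\F}$. The only nontrivial point is the projectivity of $x_\la\mc H^\sfc_{r,\F}$; modulo that verification, the argument runs parallel to the classical $q$-Schur algebra case treated in \cite[Theorem~4.1.3]{DDF}.
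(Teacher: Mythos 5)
Your proof is correct, but it takes a genuinely different route from the paper's. You verify directly that $\sT^\sfc(n,r)_\F$ is a progenerator in the category of right $\mc H^\sfc_{r,\F}$-supermodules: the generator property comes from $n\ge r$ (so $x_\omega=1$ and the regular module is a summand), and projectivity comes from the observation that $x_\la^2=\big(\sum_{w\in\mf S_\la}q^{\ell(w)}\big)x_\la$ with the Poincar\'e polynomial invertible because $v$ is not a root of unity, so that each $x_\la\mc H^\sfc_{r,\F}=e\mc H^\sfc_{r,\F}$ for an even idempotent $e$; standard (super) Morita theory then finishes the argument. The paper instead exhibits the two functors $\mc F=\sT^\sfc(n,r)_\F\otimes_{\mc H^\sfc_{r,\F}}(-)$ and $\mc G=e_\omega(-)$ explicitly, quotes $\mc G\circ\mc F\cong\mathrm{id}$ from Green, and deduces $\mc F\circ\mc G\cong\mathrm{id}$ by lifting the known isomorphism $\qSnr_\F e_\omega\otimes_{\mc H_{r,\F}}e_\omega M\cong M$ for the classical $q$-Schur algebra through Lemma~\ref{QqSchur-eomega}. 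Your argument is more self-contained (it does not presuppose the $q$-Schur algebra equivalence of \cite[Theorem 4.1.3]{DDF} and makes the role of the non-root-of-unity hypothesis completely explicit), while the paper's argument buys the explicit inverse functors $\mc F$ and $\mc G$, which are reused immediately afterwards in \eqref{functor F} and in the classification of irreducibles; of course your progenerator also produces these same functors, so nothing downstream is lost. One small simplification available to you: since $e=\big(\sum_{w\in\mf S_\la}q^{\ell(w)}\big)^{-1}x_\la$ is already an idempotent of $\mc H^\sfc_{r,\F}$, you get $x_\la\mc H^\sfc_{r,\F}=e\mc H^\sfc_{r,\F}$ directly as a summand of the regular supermodule, without passing through the freeness of $\mc H^\sfc_{r,\F}$ over $\mc H_{r,\F}$.
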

\begin{proof}We consider the two functors generally defined in \cite[\S6.2]{Gr}.
Firstly, the $\qQnr_\F$-$\mc H^\sfc_{r,\F}$-bisupermodule $\sT^\sfc(n,r)_\F$ induces a functor
\begin{equation}\label{functor F}
\mc F: \mc H^\sfc_{r,\F}\text{-\bf smod}\longrightarrow \qQnr_\F\text{-\bf smod}, \quad L\longmapsto \sT^\sfc(n,r)_\F\otimes_{\mc H^\sfc_{r,\F}} L.
\end{equation}
Secondly,
there exists the functor
\begin{align*}
\mc G: \qQnr_\F\text{-\bf smod}\rightarrow\mc H^\sfc_{r,\F}\text{-\bf smod},\quad M\mapsto e_\omega M.
\end{align*}
It is clear (cf. \cite[(6.2d)]{Gr}) that $\mc G\circ\mc F\cong {\rm id}_{\mc H^\sfc_{r,\F}\text{-\bf smod}}$.
So it suffices to prove $\mc F\circ\mc G\cong {\rm id}_{\qQnr_\F\text{-\bf smod}}$.

It is known from the case for the $q$-Schur algebra that, for any $\qSnr_\F$-module $M$, there is a left $\qSnr_\F$-module isomorphism
$$
\psi: \qSnr_\F e_\omega\otimes_{\mc H_{r,\F}}e_\omega M\cong M
$$
defined by $\psi(x\otimes m)=xm$ for $x\in \qSnr_\F e_\omega$ and $m\in e_\omega M$.
This together with Lemma~\ref{QqSchur-eomega} gives rise to a left $\qSnr_\F$-module isomorphism
$$
\Psi: \qQnr_\F e_\omega\otimes_{\mc H^\sfc_{r,\F}}e_\omega M=\qSnr_\F e_\omega\otimes_{\mc H_{r,\F}}e_\omega M\cong M
$$
We now claim that $\Psi$ is a $\qQnr_\F$-supermodule isomorphism.
Indeed, considering the basis $\{\phi_{(A|B)}\mid (A|B)\in M_n(\N|\Z_2)\}$ for $\qQnr_\F$, we have
$$
\qQnr_\F e_\omega={\rm Hom}_{\mc H^\sfc_{r,\F}}(x_\omega\mc H^\sfc_{r,\F},\bigoplus_{\la\in\La(n,r)}x_\la\mc H^\sfc_{r,\F})
=\bigoplus_{\la\in\La(n,r)}\phi_{(A(\omega,\la)|0)}\mc H^\sfc_{r,\F},
$$
where $A(\omega,\la)=(a^{\omega,\la}_{ij})$ with $a^{\omega,\la}_{ij}=1$ if $\la_1+\cdots+\la_{i-1}+1\leq j\leq \la_1+\cdots+\la_{i-1}+\la_i$ and
$a^{\omega,\la}_{ij}=0$ otherwise.
Clearly $\phi_{(A(\omega,\la)|0)}(x_\omega )=x_\la\in\mc H_{r,\F} $ for $\la\in\La(n,r)$.
Therefore $\phi_{(A(\omega,\la)|0)}\in \qSnr_\F e_\omega$ by \eqref{embedding}.
This implies that
$$
\Psi(\phi_{(A(\omega,\la)|0)}h\otimes m)={\psi}(\phi_{(A(\omega,\la)|0)}\otimes hm)=\phi_{(A(\omega,\la)|0)}hm
$$
for any $h\in\mc H^\sfc_{r,\F}, m\in e_\omega M$. Hence $\Psi(x\otimes m)=xm$ for $x\in \qQnr_\F e_\omega, m\in e_\omega M$.
This means $\Psi$ is a $\qQnr_\F$-supermodule homomorphism.
\end{proof}

We are now ready to look at the classification of irreducible representations of the superalgebra $\vQnr_\K$,
where
$$
\K:=\C(\bsv)\Big(\sqrt{[\![2]\!]},\sqrt{[\![3]\!]},\ldots,\sqrt{[\![r]\!]}\Big)
$$
is the field extension\footnote{The filed $\K$ is denoted by $\F$ in \cite[\S4]{JN}.} of $\C(\bsv)$ with $[\![k]\!]=\frac{\bsq^k-\bsq^{-k}}{\bsq-\bsq^{-1}}$ for $k\in\N$ ($\bsq=\bsv^2$). It is known from \cite[Propoition 2.2]{JN}
 that $\sH_{r,\C(\bsv)}^\sfc$ is a semisimple superalgebra. Thus, $\vQnr_{\C(\bsv)}$ is semisimple.
 The following result will imply that $\K$ is a splitting field of $\vQnr_{\C(\bsv)}$.

A partition $\xi$ of $r$ is said to be {\it strict} if its non-zero parts are distinct.
Denote by $\mc{SP}(r)$ the set of strict partitions $\xi$ of $r$.
For $\xi\in\mc{SP}(r)$, denote by $l(\xi)$ the length of $\xi$ and let
$$
\delta(\xi)=\left\{
\begin{array}{ll}
0,&\text{ if }l(\xi)\text{ is even},\\
1,&\text{ if }l(\xi)\text{ is odd}.
\end{array}
\right.
$$


%
\begin{prop}[{\cite[Corollary 6.8]{JN}}]\label{IrrHC}
For each $\xi\in\mc{SP}(r)$, there exists
a left simple $\mc H^\sfc_{r,\K}$-supermodule $U^\xi$ such that $\{U^\xi\mid\xi\in\mc{SP}(r)\}$
forms a complete set of nonisomorphic irreducible left $\mc H^\sfc_{r,\K}$-supermodules.
Moreover, every $U^\xi$ is split irreducible\footnote{An irreducible supermodule $X$ over an $\F$-superalgebra $\mathscr A$ is called split irreducible if $X\otimes_{\F}\E$ is irreducible over $\mathscr A_\E=\mathscr A\otimes_\F\E$
for any field extension $\E\supseteq\F$.} and $U^\xi$ is of type $\texttt{M}$ (resp., $\texttt{Q}$) if $\delta(\xi)=0$ (resp., $\delta(\xi)=1$).
\end{prop}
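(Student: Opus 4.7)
The plan is to follow the Okounkov--Vershik-style inductive approach adapted to the Hecke--Clifford setting, i.e., to exploit the tower $\mc H^\sfc_{1,\K}\subset\mc H^\sfc_{2,\K}\subset\cdots\subset\mc H^\sfc_{r,\K}$ together with a family of Jucys--Murphy-type commutative elements (the images of the polynomial generators $X_1,\ldots,X_r$ of the affine Hecke--Clifford superalgebra from Remark \ref{JN}) whose joint spectrum on an irreducible supermodule will be indexed by standard shifted tableaux. Before doing any construction, I would first record that $\mc H^\sfc_{r,\K}$ is split semisimple: since $\bsv$ is not a root of unity, $\mc H_{r,\C(\bsv)}$ is semisimple, and since $\sqrt{-1}\in\C\subset\K$ we may pass between the $c_j$ with $c_j^2=-1$ and the $C_j$ with $C_j^2=1$ as in Remark \ref{JN}; the adjunction of $\sqrt{[\![k]\!]}$ for $k\le r$ provides exactly the scalars that arise in the seminormal matrix coefficients, so a splitting will be attainable over $\K$.

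Next, for each $\xi\in\mc{SP}(r)$ I would construct $U^\xi$ explicitly. Let $\mathrm{ShT}(\xi)$ denote the set of standard shifted tableaux of shape $\xi$ and take a free $\K$-supermodule with basis indexed by $\mathrm{ShT}(\xi)$, possibly tensored with a Clifford module $\Cl_{\delta(\xi)}$ to produce the correct superstructure. On this space I would define an action of the generators $T_i$ and $c_j$ via a Young-seminormal formula whose coefficients are rational functions in the $\bsq$-contents of the boxes of $\xi$; the square roots in $\K$ are what allow these coefficients to be defined. Irreducibility of $U^\xi$ then follows from the fact that the joint spectrum of the Jucys--Murphy elements separates basis vectors, together with a Schur's-lemma argument for the endomorphism superalgebra.

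For completeness, I would invoke the classical strict-partition identity
$$
\sum_{\xi\in\mc{SP}(r)}2^{r-l(\xi)}\,(g^\xi)^2 \;=\; r!,
$$
where $g^\xi=|\mathrm{ShT}(\xi)|$, and multiply by $2^r$ to match $\dim_\K\mc H^\sfc_{r,\K}=2^r\cdot r!$; the factor $2^r$ distributes into contributions of $\dim U^\xi$ and of the super-endomorphism ring, and the bookkeeping is exactly controlled by the parity $\delta(\xi)$. This argument simultaneously (i) shows the $U^\xi$ are pairwise non-isomorphic, (ii) exhausts the Wedderburn decomposition, and (iii) identifies the type: when $l(\xi)$ is even, $\End_{\mc H^\sfc_{r,\K}}(U^\xi)=\K$ (type $\texttt{M}$), and when $l(\xi)$ is odd a non-scalar odd endomorphism arises from an extra central $c$-generator acting through $\Cl_1$, so $\End_{\mc H^\sfc_{r,\K}}(U^\xi)\cong\K[\epsilon]/(\epsilon^2-1)$ (type $\texttt{Q}$).

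The main obstacle is the explicit construction in paragraph two: one must write down seminormal matrix coefficients compatible with both the quadratic Hecke relation $(T_i-\bsq)(T_i+1)=0$ and the twisted commutation $T_ic_{i+1}=c_iT_i-(\bsq-1)(c_i-c_{i+1})$ of \eqref{commute-Tc}, and then verify the braid relation on each two-box move in a shifted tableau. This is the place where the $\sqrt{[\![k]\!]}$ necessarily enter, and it is the technical heart of \cite{JN}; the rest of the argument (semisimplicity, dimension count, type distinction) is essentially formal once the seminormal model is in hand.
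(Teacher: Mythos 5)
The paper does not prove this proposition at all: it is imported wholesale from Jones--Nazarov as \cite[Corollary 6.8]{JN} (with the semisimplicity of $\mc H^\sfc_{r,\C(\bsv)}$ quoted separately from \cite[Proposition 2.2]{JN}), so there is no internal argument to compare with. What you have written is, in substance, an outline of the proof that lives in \cite{JN} itself -- the tower of superalgebras, Jucys--Murphy-type elements coming from the affine Hecke--Clifford generators $X_i$, standard shifted tableaux, a seminormal form over $\K$, and the strict-partition dimension identity -- and that outline is the right road map.

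As a standalone proof, however, it has a genuine gap: all of the mathematical content sits in the step you explicitly defer, namely writing down the seminormal matrix coefficients on shifted standard tableaux (tensored with a Clifford factor $\mc C_{\delta(\xi)}$) and verifying the quadratic relation, the braid relation, and the twisted commutation $T_ic_{i+1}=c_iT_i-(\bsq-1)(c_i-c_{i+1})$ of \eqref{commute-Tc}; without this, irreducibility, pairwise non-isomorphism, splitness over $\K$, and the type-$\texttt{M}$/$\texttt{Q}$ dichotomy are all unsubstantiated, since splitness over $\K$ is essentially equivalent to having the model. Two further points are asserted rather than proved: (i) your opening semisimplicity claim is a non sequitur as stated -- semisimplicity of the subalgebra $\mc H_{r,\C(\bsv)}$ together with the change of Clifford generators $c_j\leftrightarrow C_j$ does not yield semisimplicity of $\mc H^\sfc_{r,\C(\bsv)}$; one needs an actual argument (e.g.\ the nondegenerate trace form or super crossed-product reasoning of \cite{JN}); and (ii) the Wedderburn bookkeeping is only sketched -- the identity $\sum_{\xi\in\mc{SP}(r)}2^{r-l(\xi)}(g^\xi)^2=r!$ is correct, but matching it to $\dim_\K\mc H^\sfc_{r,\K}=2^r\,r!$ requires the precise dimension formula for $U^\xi$ and the fact that a type-$\texttt{Q}$ block contributes $(\dim U^\xi)^2/2$, neither of which is derived. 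In short, the strategy is the correct one (it is the strategy of the cited source), but the proposal stops short of a proof exactly where the proof begins.
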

If we take $\F=\K$ in \eqref{functor F}, then Propositions \ref{morita-equiv} and \ref{IrrHC} imply the following immediately.
\begin{cor} If $n\geq r$,
then the set $\{\sT^\sfc(n,r)_\K\otimes_{\mc H^\sfc_{r,\K}} U^\xi\mid \xi\in\mc{SP}(r)\} $ is a complete set of nonisomorphic irreducible $\vQnr_\K$-supermodules.
\end{cor}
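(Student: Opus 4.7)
The plan is to combine the Morita equivalence established in Proposition~\ref{morita-equiv} with the classification of irreducible $\mc H^\sfc_{r,\K}$-supermodules in Proposition~\ref{IrrHC}; the result is then a one-line consequence once the hypotheses are verified.

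First I would check that $\K$ satisfies the hypotheses of Proposition~\ref{morita-equiv}. By construction $\K$ is a finite algebraic extension of $\C(\bsv)$, so in particular it is a field containing $\mcZ=\mathbb Z[\bsv,\bsv^{-1}]$, and is therefore a $\mcZ$-algebra. Since $\bsv$ is transcendental over $\mathbb Q$ inside $\C(\bsv)\subseteq\K$, its image is not a root of unity. Hence Proposition~\ref{morita-equiv} applies with $\F=\K$ and produces a Morita equivalence between $\vQnr_\K$ and $\mc H^\sfc_{r,\K}$ implemented by the functor
\begin{equation*}
\mc F:\mc H^\sfc_{r,\K}\text{-smod}\longrightarrow \vQnr_\K\text{-smod},\qquad L\longmapsto \sT^\sfc(n,r)_\K\otimes_{\mc H^\sfc_{r,\K}} L.
\end{equation*}

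Since a Morita equivalence sends simple objects to simple objects and preserves isomorphism classes, and since $\{U^\xi\mid\xi\in\mc{SP}(r)\}$ is by Proposition~\ref{IrrHC} a complete set of pairwise non-isomorphic irreducible $\mc H^\sfc_{r,\K}$-supermodules, it follows at once that $\{\mc F(U^\xi)\mid \xi\in\mc{SP}(r)\}$ is a complete irredundant list of irreducible $\vQnr_\K$-supermodules, as claimed. I do not expect any genuine obstacle here; the only small point worth verifying is that $\mc F$ respects the $\Z_2$-grading, so that simple $\mc H^\sfc_{r,\K}$-supermodules really are sent to simple $\vQnr_\K$-supermodules (and their types $\tt M$ or $\tt Q$ are preserved). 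This is automatic because $\mc F$ is realised by tensoring with the bisupermodule $\sT^\sfc(n,r)_\K$, so the supergrading is transported faithfully through the equivalence.
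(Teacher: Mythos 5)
Your proposal is correct and is exactly the paper's argument: the paper derives this corollary by taking $\F=\K$ in the functor $\mc F$ of Proposition~\ref{morita-equiv} and combining the resulting Morita equivalence with the classification in Proposition~\ref{IrrHC}. Your verification that $\K$ satisfies the hypotheses (a field which is a $\mcZ$-algebra with $\bsv$ not a root of unity) is the only detail the paper leaves implicit, and you handle it correctly.
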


For positive integers $n,r$, let
$$
\mc{SP}(n,r)=\{\xi\in\mc{SP}(r)\mid \sT^\sfc(n,r)_\K\otimes_{\mc H^\sfc_{r,\K}} U^\xi\neq 0\}.
$$
Clearly, we have from the above $\mc{SP}(n,r)=\mc{SP}(r)$ for all $n\geq r$.

Recall from the last section, there is a commuting action of $U_\bsv(\mf q_n)_\K$ and $\mc H^\sfc_{r,\K}$ on $V(n|n)^{\otimes r}_\K$.
For $\xi\in\mc{SP}(r)$, let
$$V(\xi)=\text{Hom}_{\mc H^c_{r,\K}}(U^{\xi}, V(n|n)^{\otimes r}_\K).$$
This is a $U_\bsv(\mf q_n)_\K$-module.
  If $V(\xi)\neq0$, then $U^\xi$ is a direct summand of $V(n|n)^{\otimes r}_\K$. Hence it is a simple $\vQnr_\K$-supermodule and, hence, a simple $U_\bsv(\mf q_n)_\K$-supermodule.

\begin{lem} Let $\xi\in\mc{SP}(r)$. If $n<r$ and $l(\xi)>n$, then $V(\xi)=0$. In paticular, we have
$$
\mc{SP}(n,r)=\{\xi\in\mc{SP}(r)\mid l(\xi)\leq n\}.
$$
\end{lem}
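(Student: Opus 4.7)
The plan is to decompose $V(\xi)$ via Corollary~\ref{tensorspace}, use Frobenius reciprocity to convert each $\Hom$-space into a sign-isotypic subspace of $U^\xi$ restricted to a parabolic Hecke subalgebra, and then invoke a combinatorial vanishing from the Jones--Nazarov construction of $U^\xi$. First, I would base-change Corollary~\ref{tensorspace} to $\K$ to get $V(n|n)_\K^{\otimes r}\cong\bigoplus_{\la\in\La(n,r)}\sH^\sfc_{r,\K}\,y_\la$ as left $\sH^\sfc_{r,\K}$-supermodules, and apply $\Hom_{\sH^\sfc_{r,\K}}(U^\xi,-)$ to obtain
$$V(\xi)\cong\bigoplus_{\la\in\La(n,r)}\Hom_{\sH^\sfc_{r,\K}}(U^\xi,\sH^\sfc_{r,\K}\,y_\la).$$
Every $\la\in\La(n,r)$ satisfies $l(\la)\leq n<l(\xi)$, so it suffices to show each summand vanishes whenever $l(\la)<l(\xi)$.

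Next, I would observe that $y_\la\in\sH_{\la,\K}$ and $\sH_{\la,\K}\,y_\la=\K y_\la$ is the one-dimensional sign module of $\sH_{\la,\K}$, on which $T_{s_k}$ acts as $-1$ for each $s_k\in\fS_\la$. Since $\sH^\sfc_{r,\K}$ is free as a right $\sH_{\la,\K}$-module (by Lemma~\ref{stdbs} together with the parabolic decomposition of Lemma~\ref{DJ}), one has $\sH^\sfc_{r,\K}\,y_\la\cong\sH^\sfc_{r,\K}\otimes_{\sH_{\la,\K}}\K y_\la$. Invoking the induction--restriction adjunction and the semisimplicity of $\sH^\sfc_{r,\K}$ over $\K$ (which symmetrises Hom-dimensions for simples) yields
$$\dim_\K\Hom_{\sH^\sfc_{r,\K}}(U^\xi,\sH^\sfc_{r,\K}\,y_\la)=\dim_\K\bigl(U^\xi|_{\sH_{\la,\K}}\bigr)^{\mathrm{sgn}_\la}.$$

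Finally, this sign-isotypic multiplicity vanishes whenever $l(\la)<l(\xi)$: by the Jones--Nazarov realisation of $U^\xi$ via shifted tableaux of shape $\xi$~\cite{JN}, this multiplicity counts certain shifted semistandard tableaux of shape $\xi$ with content $\la$, and the column-strictness forces at least $l(\xi)$ distinct entries, hence $l(\la)\geq l(\xi)$; so the hypothesis $l(\la)\leq n<l(\xi)$ makes the count zero. This gives $V(\xi)=0$. The ``in particular'' equality then follows by combining this inclusion with the converse $\{\xi:l(\xi)\leq n\}\subseteq\mc{SP}(n,r)$, which is handled by taking $\la=\xi\in\La(n,r)$ and exhibiting the diagonal shifted tableau as a nontrivial element of $\bigl(U^\xi|_{\sH_{\xi,\K}}\bigr)^{\mathrm{sgn}_\xi}$.

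The hard part will be the precise combinatorial vanishing in the third step, which rests on the shifted-tableau description of $U^\xi$ from~\cite{JN}. A cleaner and more conceptual alternative is to use Olshanski's double centraliser (Proposition~\ref{Olthm}): a nonzero $V(\xi)$ must be a split-simple $U_\bsv(\mf q_n)_\K$-module, and all its simultaneous $K_1,\ldots,K_n$-eigenvalues on $V(n|n)_\K^{\otimes r}$ are of the form $(\bsv^{\la_1},\ldots,\bsv^{\la_n})$ with $\la\in\La(n,r)$, so the rank-$n$ bound on the even Cartan forces $l(\xi)\leq n$ once $\xi$ is identified as the highest weight of $V(\xi)$ in the classification of simple polynomial representations of $U_\bsv(\mf q_n)$.
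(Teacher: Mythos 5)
Your overall route is the same as the paper's: decompose $V(n|n)^{\otimes r}_\K$ into $\bigoplus_{\la\in\La(n,r)}\sH^\sfc_{r,\K}y_\la$ and reduce everything to a multiplicity statement for $U^\xi$ in $\sH^\sfc_{r,\K}y_\la$, namely that it vanishes unless $\xi\trianglerighteq\la$ (hence $l(\xi)\leq l(\la)\leq n$) and is nonzero for $\la=\xi$. The paper gets this multiplicity statement in one stroke by quoting \cite[Lem.~3.50]{CW} (at $\bsq=1$, transferred to $\K$ by the category equivalence), see \eqref{permutation-decomp}. Where you differ is in how you propose to justify it, and this is where the genuine gap sits: after the (correct) Frobenius-reciprocity/semisimplicity reduction to the $\mathrm{sgn}_\la$-isotypic space of $U^\xi|_{\sH_{\la,\K}}$, you assert that this dimension ``counts certain shifted semistandard tableaux of shape $\xi$ with content $\la$'' by the Jones--Nazarov realisation. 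That tableau interpretation is not in \cite{JN}, which provides $q$-Young symmetrizers, branching and dimension formulas via \emph{standard} shifted tableaux, not the monomial/marked-shifted-tableau expansion you need; the latter is the Frobenius character formula for the Hecke--Clifford algebra (equivalently Sergeev duality and Schur $Q$-functions, cf.\ \cite{WW}, \cite{CW}), i.e.\ precisely the content the paper imports by citation. Your diagonal argument (that a marked shifted tableau of shape $\xi$ forces at least $l(\xi)$ distinct letters, and that the staircase filling works for $\la=\xi$) is fine once that interpretation is in place, so the step is repairable, but as written the key input is asserted with an inadequate source rather than proved.

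A second, smaller omission: the ``in particular'' statement is about $\mc{SP}(n,r)$, which is defined through $\sT^\sfc(n,r)_\K\otimes_{\sH^\sfc_{r,\K}}U^\xi$, whereas your argument controls $V(\xi)=\Hom_{\sH^\sfc_{r,\K}}(U^\xi,V(n|n)^{\otimes r}_\K)$. You need the identification $V(\xi)\cong\sT^\sfc(n,r)_\K\otimes_{\sH^\sfc_{r,\K}}U^\xi$ (the paper writes $U^\xi=\sH^\sfc_{r,\K}\epsilon$ for an idempotent $\epsilon$ and uses the $\tau$-twist of Remark~\ref{tau twist}); your proposal silently equates nonvanishing of $V(\xi)$ with membership in $\mc{SP}(n,r)$. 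Your alternative sketch via Proposition~\ref{Olthm} and highest weights of $U_\bsv(\mf q_n)$ is plausible but would require the highest-weight classification for $U_\bsv(\mf q_n)$-modules in tensor space, machinery the paper deliberately avoids.
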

\begin{proof}By \eqref{decompVr} and \eqref{yla}, we have $V(n|n)^{\otimes r}_\K$ is isomorphic to a direct sum of $\sH^\sfc_{r,\K}y_\la$
with $\la\in\La(n,r)$. By \cite[Lem.~3.50]{CW}, there exist some nonnegative integers $k_{\xi,\la}$ with $k_{\xi,\xi}\neq0$ such that\footnote{Though the result there is for the specialisation $(\HC)_{\mathbb C}$ of $\HC$ at $\bsq=1$.
However, by the category equivalence between the module categories, the proof there carries over.}
\begin{equation}\label{permutation-decomp}
\sH^\sfc_{r,\K}y_\la\cong\oplus_{\xi\in\mc{SP}(r),\xi\trianglerighteq\la}k_{\xi,\la}U^\xi,
\end{equation}
where $\trianglerighteq$ is the dominance order. Since $\xi\trianglerighteq\la$ implies $l(\xi)\leq l(\la)=n$, it follows that $V(\xi)=0$ if $l(\xi)>n$, proving the first assertion.
To prove the last assertion, we write $U^\xi=\sH^\sfc_{r,\K}\epsilon$ for some idempotent $\ep$.
Then, as a left $\vQnr_\K$-module,
$$V(\xi)\cong \ep V(n|n)^{\otimes r}_\K\cong(V(n|n)^{\otimes r}_\K)^\tau\ep\cong\sT^\sfc(n,r)_\K\otimes_{\sH^\sfc_{r,\K}}U^\xi.$$
The last assertion follows from the fact $k_{\xi,\xi}\neq0$.
\end{proof}
We now use Green's \cite[Theorem (6.2g)]{Gr} to get the following classification theorem.
\begin{thm}\label{classification}
For positive integers $n,r$,  the set
$$
\{\sT^\sfc(n,r)_\K\otimes_{\mc H^\sfc_{r,\K}} U^\xi\cong V(\xi)\mid \xi\in\mc{SP}(n,r)\}
$$
forms a complete set of nonisomorphic irreducible $\vQnr_\K$-supermodules.
\end{thm}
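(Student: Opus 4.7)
The plan is to deduce the statement as a direct application of Green's classification theorem \cite[(6.2g)]{Gr}, taking $H=\mc H^\sfc_{r,\K}$, $T=\sT^\sfc(n,r)_\K$, and $S=\End_H(T)$. The key inputs have already been assembled earlier in the paper, so after setting up the framework I expect the work to reduce to a clean bookkeeping argument.

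First I would record the setup: Corollary \ref{tensorspace2} identifies $\End_H(T)$ with $\vQnr_\K$, and the base change is such that $H$ is semisimple by \cite[Proposition 2.2]{JN} (the fact cited just before Proposition \ref{IrrHC}). Under these hypotheses, Green's theorem (6.2g) provides a bijection between isomorphism classes of simple $S$-supermodules and those simple $H$-supermodules that appear as a summand of $T$, implemented by $U \mapsto \Hom_H(U,T)$. Inverse bijection is implemented by applying the functor $\mc F$ of \eqref{functor F}.

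Next I would determine which simple $H$-supermodules appear in $T$. By Proposition \ref{IrrHC} the simple $H$-supermodules are $\{U^\xi : \xi \in \mc{SP}(r)\}$, and the Lemma immediately preceding the theorem (using the decomposition \eqref{permutation-decomp} of each $\sH^\sfc_{r,\K}y_\la$) shows precisely that $U^\xi$ occurs as a summand of $T$ if and only if $l(\xi)\le n$, i.e. $\xi\in\mc{SP}(n,r)$. Together with Green's correspondence, this yields a complete list $\{\Hom_H(U^\xi, T) : \xi\in\mc{SP}(n,r)\}$ of pairwise non-isomorphic simple $\vQnr_\K$-supermodules.

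Finally I would identify $\Hom_H(U^\xi,T)$ with the two modules named in the theorem. Writing $U^\xi=H\epsilon$ for an idempotent $\epsilon$ (as already done in the previous lemma's proof), one has
$$\Hom_H(U^\xi,T)\cong T\epsilon \cong \sT^\sfc(n,r)_\K\otimes_{\mc H^\sfc_{r,\K}}U^\xi,$$
and the twist-by-$\tau$ argument from Corollary \ref{tensorspace2} further identifies this with $V(\xi)=\Hom_{\mc H^\sfc_{r,\K}}(U^\xi,V(n|n)^{\otimes r}_\K)$.

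The principal technical point to watch is transferring Green's theorem to the super setting: for $\xi$ of type $\texttt{Q}$ the endomorphism superalgebra $\End_H(U^\xi)$ is a Clifford extension of $\K$ rather than $\K$ itself, so one must take care when decomposing $\End_H(T)$ into matrix-type factors. However, the split-irreducibility built into Proposition \ref{IrrHC} guarantees that the Green construction still separates non-isomorphic $U^\xi$'s and produces split-irreducible $S$-supermodules, so the classification statement goes through without modification. This is the step I expect to require the most care, although the underlying argument is standard.
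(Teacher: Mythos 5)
Your proposal is correct, but it takes a genuinely different route from the paper. The paper argues in two stages: for $n\geq r$ it first establishes a full Morita equivalence between $\vQnr_\K$ and $\mc H^\sfc_{r,\K}$ (Proposition \ref{morita-equiv}, using the idempotent $e_\omega$ and the pair of functors $\mc F$, $\mc G$), so that \emph{every} $U^\xi$ survives; for $n<r$ it then realises $\vQnr_\K$ as the idempotent truncation $e\,\mc Q_\bsv(r,r)_\K\,e$ of the $n=r$ case via \eqref{centralizer-subalgebra} and applies Green's (6.2g) literally to that idempotent, computing $e(\sT^\sfc(r,r)_\K\otimes U^\xi)=\sT^\sfc(n,r)_\K\otimes U^\xi$. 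You instead invoke the classification of simple modules over $\End_H(T)$ for a module $T$ over a split semisimple (super)algebra $H$, uniformly in $n$ and $r$; this bypasses both the Morita equivalence and the embedding into $\mc Q_\bsv(r,r)_\K$, at the cost of having to justify the Artin--Wedderburn-type statement in the super setting (type $\mathtt{Q}$ simples have Clifford endomorphism superalgebras), which you correctly flag and which does go through by split irreducibility from Proposition \ref{IrrHC}. Two small points of care: Green's (6.2g) as stated is about idempotent truncations $eSe$, so your appeal to it is really to the standard double-centraliser fact for semisimple endomorphism algebras rather than to (6.2g) verbatim (the paper's two-stage argument is arranged precisely so that (6.2g) applies literally); and since $T=\sT^\sfc(n,r)_\K$ is a \emph{right} $\mc H^\sfc_{r,\K}$-module while $U^\xi$ is a left module, the functor you want is $\Hom_H(U^\xi, V(n|n)^{\otimes r}_\K)$ on the left-module side (or equivalently $T\otimes_H U^\xi\cong T\epsilon$ after writing $U^\xi=H\epsilon$), exactly as in the lemma preceding the theorem. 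What your approach buys is a shorter, case-free proof; what the paper's approach buys is that the $n\geq r$ Morita equivalence is established as a result of independent interest along the way.
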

\begin{proof}The case for $n\geq r$ is seen above. We now assume $n<r$. Consider the natural embedding
$$
\La(n,r)\hookrightarrow \La(r,r),\quad \la\mapsto \overline{\la}=(\la_1,\ldots,\la_n,0,\ldots,0).
$$
For $\la\in\La(n,r)$, let $D(\la)={\rm diag}(\la_1,\la_2,\ldots,\la_n,0,\ldots,0)\in M_r(\N)$ and set
$$
e=\sum_{\la\in\La(n,r)}\phi_{(D(\la)|0)}.
$$
Then
\begin{align}\label{centralizer-subalgebra}
\vQnr_\K\cong {\rm End}_{\mc H^\sfc_{r,\K}}\bigg(\bigoplus_{\la\in\La(n,r)}x_\la\mc H^\sfc_{r,\K}\bigg)=e {\rm End}_{\mc H^\sfc_{r,\K}}\bigg(\bigoplus_{\mu\in\La(r,r)}x_\mu\mc H^\sfc_{r,\K}\bigg)e=e\mc Q_\bsv(r,r)_\K e.
\end{align}
Meanwhile, we have
\begin{align*}
e(\sT^\sfc(r,r)_\K\otimes_{\mc H^\sfc_{r,\K}}U^\xi)=e\sT^\sfc(r,r)_\K\otimes_{\mc H^\sfc_{r,\K}}U^\xi
=e\bigg(\bigoplus_{\mu\in\La(r,r)}x_\mu\mc H^\sfc_{r,\K}\bigg)\otimes_{\mc H^\sfc_{r,\K}}U^\xi\\
=\bigg(\bigoplus_{\la\in\La(n,r)}x_{\overline{\la}}\mc H^\sfc_{r,\K}\bigg)\otimes_{\mc H^\sfc_{r,\K}}U^\xi
=\sT^\sfc(n,r)_\K\otimes_{\mc H^\sfc_{r,\K}}U^\xi.
\end{align*}
Since $\{\sT^\sfc(r,r)_\K\otimes_{\mc H^\sfc_{r,\K}}U^\xi\mid \xi\in\mc{SP}_r\}$ is a complete set of nonisomorphic simple $\sQ_\bsv(r,r)_\K$-supermodules,
\eqref{centralizer-subalgebra} together with \cite[(6.2g)]{Gr} implies that the set
$$\{\sT^\sfc(n,r)_\K\otimes_{\mc H^\sfc_{r,\K}}U^\xi)\mid \xi\in\mc{SP}_r\}\setminus\{0\}
=\{\sT^\sfc(n,r)_\K\otimes_{\mc H^\sfc_{r,\K}} U^\xi\mid \xi\in\mc{SP}(n,r)\}$$
is a complete set of nonisomorphic simple $\vQnr_\K$-supermodules.
\end{proof}

\begin{rem}\label{ForMori3}(1) By regarding the category $\oplus_{r\geq 0}\vQnr_\K{-\bf smod}$ as a full subcategory of the category of finite dimensional $U_\bsv(\mathfrak q_n)$-supermodules, we recover the category $\mathcal O_{\text{int}}^{\geq0}$ of tensor modules
(the counterpart of polynomial representations studied by Green \cite{Gr}) investigated in \cite{GJKKK}.

(2) We remark that, for the $q$-Schur superalgebras $\mathcal S_\bsv(m|n,r)_{\mathbb F}$ of type $\tt M$ with $m+n\geq r$, their irreducible representations at (odd) roots of unity have been classified in
\cite{DGW}, while a non-constructible classification of irreducible $\vQnr_{\mathbb F}$-supermodules is obtained in \cite[Theorem 6.32]{Mo} by a generalised cellular structure\footnote{Green's codeterminant basis was a first such basis for the Schur algebra.} (\cite{GL}, \cite{DR0}).
Moreover, unlike the situation in \cite{DGW}, the link between representations of $\vQnr_{\mathbb F}$ and the quantum queer supergroup has not yet been established, since we do not know if the surjective map $\Phi_r$ given in Theorem 6.4 can be extended to the roots of unity case.
\end{rem}

\begin{appendix}
\section{Comparison with Mori's basis}
In \!\cite[Proposition\,6.10]{Mo}, Mori also obtained a basis for $x_\mu\HCR\cap\HCR x_\la$ for $\la,\mu\in\La(n,r)$, using a different approach via the combinatorics of circled tableaux. In terms of our notation, we may describe Mori's basis as follows. First, denote the basis elements given in Corollary \ref{trivial-intersect} by $m_\sfT$ where $\sfT$ runs over all circled row-standard tableaux. Then use a combinatorial relation between a circled row-semistandard tableau $\sfS$ and certain circled row-standard tableaux $\sfT$ to define $m_\sfS$ as a linear combination of those involved $m_\sfT$. We will see below that, if $\sfS$ gives the matrix pair $(A|B)\in M_n(\mathbb N|\mathbb Z_2)_{\mu,\la}$, then $m_\sfS=T_{A|B}$.  In other words, Mori's definition is simply write the element $T_{A|B}$ defined in \eqref{TAB} as a linear combination of the basis given in Corollary \ref{trivial-intersect} for $\sH_{r,R}^cx_\la$. We now establish the above described relationship.

A Young diagram of a composition $\la$ is defined by $Y(\la)=\{(i,j)| i\geq 1,1\leq j\leq\la_i\}$. A tableau of shape $\la$ is a function $\sfT: Y(\la)\rightarrow \{1,2,\ldots\}$.
The weight of a tableau $T$ is a composition $\mu=(\mu_1,\mu_2,\ldots)$ whose $i$-th component is $\mu_i=|\sfT^{-1}(i)|$.
$\sfT$ is said to be {\em row-semistandard} if the entries in each row of $\sfT$ are weakly increasing.
We denote by ${\rm Tab}_{\la;\mu}$ the set of row-semistandard tableaux of shape $\la$ and weight $\mu$.
It is known (cf. \cite[\S5.2]{Mo}) that the set ${\rm Tab}_{\la;\mu}$ is in bijection with the set $\mc D_{\mu,\la}$
and hence in bijection with the set $M_n(\N)_{\mu,\la}$. More precisely, given $\sfS\in {\rm Tab}_{\la;\mu}$,
let $M(\sfS)=(m_{ij})$ with $m_{ij}$ being the number $i$'s in the $j$-th row of $\sfS$.
Then clearly $M(\sfS)\in M_n(\N)_{\mu,\la}$ and by \eqref{mapj} we obtain a double coset representative $d_{\sfS}:=d_{M(\sfS)}\in \mc D_{\mu,\la}$.
Conversely, it is easy to see that the matrix $M\in M_n(\N)_{\mu,\la}$ uniquely determins a row-semistandard tableau $\sfS$.

For each $\sfS\in{\rm Tab}_{\la;\mu}$, let ${\rm Tab}_{\sfS}=\{\sfT\in{\rm Tab}_{\la;(1^r)}\mid \sfT_\mu=\sfS\}$, where $\sfT_\mu$ is the row-semistandard tableau obtained from $\sfT$ by replacing its entries $1,2,\ldots,\mu_1$ by $1$, $\mu_1+1,\ldots,\mu_1+\mu_2$ by $2$ and so forth. It is known from the discussion in \cite[Section 5.2]{Mo} that for each $\sfT\in{\rm Tab}_{\sfS}$, we have
\begin{equation}\label{dsfT}
d_{\sfT}=u d_{\sfS} \quad\text{and}\quad|\mc D^{-1}_{\nu(d_{\sfS}^{-1})}\cap \mf S_{\mu}|=|{\rm Tab}_{\sfS}|,
\end{equation}
where $u\in \mc D^{-1}_{\nu(d_{\sfS}^{-1})}\cap \mf S_{\mu}$.

\def\ci{\textcircled{$i$}}
\def\cone{\textcircled{1}}
\def\ctwo{\textcircled{2}}
\def\cthree{\textcircled{3}}
\def\cfive{\textcircled{5}}
\def\cfour{\textcircled{4}}
\def\csix{\textcircled{6}}
\def\cseven{\textcircled{7}}
\def\ceight{\textcircled{8}}
\def\yone{i_1}
\def\ytwo{i_2}
\def\ylast{i_k}
\def\zone{\textcircled{$i_1$}}
\def\ztwo{\textcircled{$i_2$}}
\def\zlast{\textcircled{$i_k$}}

Following \cite{Mo}, we introduce the notion of circled tableau. A circled tableau of shape $\la$ is a map $\sfS:Y(\la)\rightarrow \{1,2,\ldots\}\sqcup\{\textcircled{1},\textcircled{2},\ldots\}$. From a circled tableau $\sfS$ we obtain its underlying ordinary tableau $\sfS^\times$ by removing circles from numbers. The weight of a circled tableau is defined as that of underlying tableau. Denote by ${\rm Tab}_{\la;\mu}^c$ the set of circled tableaux $\sfS$ such that $\sfS^\times\in{\rm Tab}_{\la;\mu}$ and circled numbers must be placed at the rightmost of a bar $\young(ii\cdots i)$ in a row for every $i$.

For $\sfS\in{\rm Tab}_{\la;\mu}^c$, define $M^c(\sfS)=(A|B)\in M(\N|\Z_2)_{\mu,\la}$ by letting $B=(b_{ij})$ with $b_{ij}=0$ if the bar containing $i$ in the $j$-th row of $\sfS$ has the form $\young(ii\cdots i)$ and $b_{ij}=1$ if the bar containing $i$ in the $j$-th row of $\sfS$ has the form $\young(ii\cdots \ci)$, and $A=M(\sfS^\times)-B$. For example, for the circled tableau $\sfS=\young(1\cone2\cthree,15\cfive,\cfour)$,  we have $M^c(\sfS)=(A|B)$ with
 \begin{equation*}\label{S-AB}
M(\sfS^\times)=
\left(
  \begin{array}{ccccc}
   2 & 1& 0 & 0 & 0\\
   1 & 0& 0 & 0 & 0\\
   1 & 0& 0 & 0 & 0\\
   0 & 0& 1 & 0 & 0\\
   0 & 2& 0 & 0 & 0
  \end{array}
\right),\quad
 A=\left(
  \begin{array}{ccccc}
   1 & 1& 0 & 0 & 0\\
   1 & 0& 0 & 0 & 0\\
   0 & 0& 0 & 0 & 0\\
   0 & 0& 0 & 0 & 0\\
   0 & 1& 0 & 0 & 0
  \end{array}
\right),\quad
B=\left(
  \begin{array}{ccccc}
   1 & 0& 0 & 0 & 0\\
   0 & 0& 0 & 0 & 0\\
   1 & 0& 0 & 0 & 0\\
   0 & 0& 1 & 0 & 0\\
   0 & 1& 0 & 0 & 0
  \end{array}
\right).
 \end{equation*}
 Conversely, given $(A|B)\in M(\N|\Z_2)_{\mu,\la}$, we have $A+B\in M(\N)_{\mu,\la}$ and hence it uniquely determines a uncircled tableau in ${\rm Tab}_{\la;\mu}$ and the matrix $B$ determines the places of the circled numbers.
Hence, we obtain the following result.
\begin{lem}  For $\la,\mu\in\La(n,r)$, the map  $M^c: {\rm Tab}_{\la;\mu}^c\rightarrow M(\N|\Z_2)_{\mu,\la}$ is a bijection.
\end{lem}

In particular, we may use the set ${\rm Tab}_{\la;(1^r)}^c$ to label the basis elements for $\HCR x_\la$ given in Corollary \ref{trivial-intersect}
by setting, for $\sfT\in{\rm Tab}_{\la;(1^r)}^c$, 
\begin{equation}\label{msfT}
m_\sfT=T_{d_{\sfT^\times}}c^{\al_\sfT}x_\la\quad (\text{see \cite[\S6.4]{Mo}}),
\end{equation}
where $\al_\sfT=(\al_1,\al_2,\ldots,\al_r)\in\Z_2^r$ with $\al_k=1$ if the $k$-th entry is circled and $\al_k=0$ otherwise according to the top-to-bottom reading order.

By extending this definition to $\sfS\in{\rm Tab}_{\la;\mu}^c$, Mori further defined the element $m_\sfS\in x_\mu\HCR\cap\HCR x_\la$. Suppose the circled bars
in $\sfS$ are $\young(\yone\yone\cdots\zone),\young(\ytwo\ytwo\cdots\ztwo),\ldots,\young(\ylast\ylast\cdots\zlast)$ with lengths $r_1,r_2,\ldots,r_k$. By distributing
$$
\young(ii\cdots\ci )\mapsto \young(\ci i\cdots i)+q\young(i\ci\cdots i)+\cdots+q^{r-1}\young(ii\cdots\ci)
$$
for every $i=i_t$ and $r=r_t$, $1\leq t\leq k$, we first make a formal linear combintation $\sum_{0\leq l_t\leq r_t-1, 1\leq t\leq k}q^{l_1+l_2+\cdots+l_k}\sfR$ of circled row-semistandard tableaux $\sfR$,
where $\sfR=\sfR(l_1,l_2,\ldots,l_k)$ are the circled tableaux obtained from $\sfS$ by replacing the circled bars $\young(\yone\yone\cdots\zone)$, $\young(\ytwo\ytwo\cdots\ztwo),\ldots,\young(\ylast\ylast\cdots\zlast)$ by  $\young(\yone\cdots\zone\cdots\yone)$, $\young(\ytwo\cdots\ztwo\cdots\ytwo),\ldots,$ $\young(\ylast\cdots\zlast\cdots\ylast)$,
in which the circled
numbers are placed in the $(l_1+1)$-th,$(l_2+1)$-th,$\ldots$,$(l_k+1)$-th palces.
Then define
\begin{equation}\label{m_sfS}
m_{\sfS}=\sum_{0\leq l_t\leq r_t-1, 1\leq t\leq k}q^{l_1+l_2+\cdots+l_k}\sum_{\sfT\in\Gamma(\sfR)}m_\sfT,
\end{equation}
 where
$\Gamma(\sfR)$ consists of all $\sfT\in {\rm Tab}_{\la;(1^r)}^c$ such that $\sfT^\times\in{\rm Tab}_{\sfS^\times}$ and its positions of circles are same as that of $\sfR=\sfR(l_1,l_2,\ldots,l_k)$.

\begin{prop} For $\sfS\in{\rm Tab}_{\la;\mu}^c$, if $M^c(\sfS)=(A|B)$ then $m_{\sfS}=T_{A|B}.$
\end{prop}
\begin{proof} Observe that, for each $\sfR=\sfR(l_1,l_2,\ldots,l_k)$, $\Ga(\sfR)$ is a disjoint union of
$$
\Gamma(\sfR,\sfU):=\{\sfT\in{\rm Tab}^c_{\la;(1^r)}|\sfT^\times =\sfU, \sfT \text{ and }\sfR \text{ have the same positions of circles} \}
$$
for all $\sfU\in{\rm Tab}_{\sfS^\times}$. Thus, by \eqref{msfT},  \eqref{dsfT}, \eqref{cAB}, and \eqref{TAB},  \eqref{m_sfS} becomes
\begin{align*}
m_\sfS
&=\sum_{0\leq l_t\leq r_t-1, 1\leq t\leq k}q^{l_1+l_2+\cdots+l_k}\sum_{\sfU\in{\rm Tab}_{\sfS^\times}}\sum_{\sfT\in\Gamma(\sfR(l_1,l_2,\ldots,l_k),\sfU)}T_{d_{\sfT^\times}}c^{\al_\sfT}x_\la\\
&=\sum_{\sfU\in{\rm Tab}_{\sfS^\times}}T_{d_{\sfU}}\sum_{0\leq l_t\leq r_t-1, 1\leq t\leq k}q^{l_1+l_2+\cdots+l_k}
\sum_{\sfT\in\Gamma(\sfR(l_1,l_2,\ldots,l_k),\sfU)}c^{\al_\sfT}x_\la\\
&=\sum_{u\in \mc D^{-1}_{\nu(d_{\sfS^\times}^{-1})}\cap \mf S_{\mu}}T_{u}T_{d_{\sfS^\times}}c_{A|B}'x_\la\\
&=T_{A|B},
\end{align*}
as desired.
\end{proof}
\begin{rem} With this identification and by Proposition~\ref{hom-basis}, we now can remove the condition that $1+q$ is not a zero-divisor in $R$,  stated in \cite[Proposition~6.10]{Mo}. Of course, the standard assumption that the characteristic of $R$ is not equal to 2 in the super theory will always be maintained.
\end{rem}
\end{appendix}

\end{document}